\documentclass[a4paper,11pt,twoside]{amsart}

\usepackage[left=2.7cm,right=2.7cm,top=3.5cm,bottom=3cm]{geometry}

\usepackage{amsmath,amssymb,amscd,amsfonts}
\usepackage{mathrsfs}
\usepackage{latexsym}
\usepackage{graphicx}
\usepackage[all]{xy}
\usepackage{eucal}
\usepackage{verbatim}

\newcommand{\Q}{\mathbb{Q}}

\newcommand{\Z}{\mathbb{Z}}

\newcommand{\F}{\mathbb{F}} 
\newcommand{\M}{\mathfrak{M}} 
\renewcommand{\k}{\kappa} 
\newcommand{\G}{\mathcal{G}} 
\newcommand{\V}{\mathcal{V}}
\newcommand{\X}{\mathcal{X}}
\newcommand{\Gal}{{\rm Gal}}
\newcommand{\sri}{\twoheadrightarrow}
\newcommand{\iri}{\hookrightarrow}
\renewcommand{\l}{\ell}

\renewcommand{\L}{\Lambda}
\newcommand{\wh}{\widehat}
\newcommand{\ov}{\overline}
\newcommand{\corank}{{\rm corank}}
\newcommand{\rank}{{\rm rank}}
\newcommand{\dl}[1]{\lim_{\buildrel \longrightarrow\over{#1}}}
\newcommand{\il}[1]{\lim_{\buildrel \longleftarrow\over{#1}}}

\newtheorem{defin}{Definition}[section]
\newtheorem{prop}[defin]{Proposition}
\newtheorem{lem}[defin]{Lemma}
\newtheorem{rem}[defin]{Remark}
\newtheorem{thm}[defin]{Theorem}

\newtheorem{cor}[defin]{Corollary}
\newtheorem{exe}[defin]{Example}

\input cyracc.def
\font\tencyr=wncyr10
\def\cyr{\tencyr\cyracc}
\newcommand{\ts}{\mbox{\cyr Sh}}

\title[Control theorems for $\ell$-adic Lie extensions...]{Control Theorems for $\ell$-adic Lie extensions of global function fields}

\author{A. Bandini and M. Valentino}

\begin{document}

\begin{abstract}
Let $F$ be a global function field of characteristic $p>0$, $K/F$ an $\l$-adic Lie extension
unramified outside a finite set of places $S$ and $A/F$ an abelian
variety. We study $Sel_A(K)_\l^\vee$ (the Pontrjagin dual of the Selmer
group) and (under some mild hypotheses) prove that it is a finitely generated $\Z_\l[[\Gal(K/F)]]$-module
via generalizations of Mazur's Control Theorem. If $\Gal(K/F)$ has no elements of order $\l$ and contains
a closed normal subgroup $H$ such that $\Gal(K/F)/H\simeq \Z_\l$, we are able to give sufficient conditions
for $Sel_A(K)_\l^\vee$ to be finitely generated as $\Z_\l[[H]]$-module and, consequently, a torsion
$\Z_\l[[\Gal(K/F)]]$-module. We deal with both cases $\l\neq p$ and $\l=p$.
\end{abstract}

\maketitle

\noindent{\bf MSC (2010):} 11R23 (primary), 11R58, 11G35 (secondary).

\noindent{\bf Keywords:} Selmer groups, abelian varieties, function fields, Lie extensions.

\section{Introduction}
One of the main features of Iwasawa theory is the link it provides between characteristic ideals of
(duals of) Selmer groups and $p$-adic $L$-functions. In the classical abelian setting of
$\Z_\l^d$-extensions both its analytic and algebraic sides have been well developed for general global
fields, leading to the statements (and, in some cases, to the proofs) of Main Conjectures.
The foundations for the research in non-commutative Iwasawa theory for a general $\l$-adic
Lie extension can be found in the celebrated paper by Coates et al. \cite{CFKSV} and the subject
has been developed from there, first in the number field setting and (more recently) for
function fields of positive characteristic.\\

\noindent In this work we deal with the function field setting. Namely let $F$ be a global function field
of trascendence degree one over its constant field $\F_q$, where $q$ is a power of a fixed prime $p\in \Z$.
Let $K$ be a Galois extension of $F$ unramified outside a finite set of primes $S$ and such that $G=\Gal(K/F)$
is an infinite $\l$-adic Lie group ($\l\in\Z$ a prime number). Let $\L(G)$ be the
associated Iwasawa algebra (for precise definitions of all notations and objects appearing
in this Introduction see Section \ref{SecNot}). Finally, let $A$ be an abelian variety defined over $F$
and of finite dimension $g$.\\
When $G$ has no elements of order $\l$ and contains a closed normal subgroup $H$ such
that $G/H\simeq \Z_\l\,$, Coates et al. \cite{CFKSV} are able to define a characteristic
element for every finitely generated $\L(G)$-module $M$, which has the property that the
quotient of $M$ by the submodule of elements of order a finite power of $\l$ is finitely
generated over the Iwasawa algebra of $H$ (i.e., {\em $M$ belongs to the category $\mathfrak{M}_H(G)$}
in the language of Conjecture 5.1 of \cite{CFKSV}).\\

\noindent We shall study the structure of $Sel_A(K)_\l^\vee$ (the Pontrjagin dual of the Selmer group)
as a mo\-du\-le over both $\L(G)$ and $\L(H)$ via the classical tool provided by Mazur's Control Theorem
(see \cite{Ma} and, for recent generalizations to $\Z_\l^d$-extensions of function fields, \cite{BBL},
\cite{BL}, \cite{BL2} and \cite{T}). We will prove that $Sel_A(K)_\l^\vee$
is a finitely generated $\L(G)$-module and, in a similar way, we obtain that $Sel_A(K)_\l^\vee$
is a finitely generated $\L(H)$-module as well, provided that $G$ contains a suitable closed subgroup $H$
(and, for $\l=p$, under certain hypotheses on the splitting of primes in the $\Z_\l$-extension $K^H/F$).
When this is the case, since $H$ has infinite index in $G$, $Sel_A(K)_\l^\vee$ is also a torsion $\L(G)$-module.\\
Working on the algebra $\L(H)$ is often sufficient to gather information about
$Sel_A(K)_\l^\vee$ as a $\L(G)$-module, but we prefer to work on $\L(G)$ as well for several
reasons. First of all, as one can see in \cite{CFKSV}, we always need finitely generated
$\L(G)$-modules. Secondly, in our first control theorem (Theorem \ref{CTlnotp}) it is
possible to find the path we will follow and the tools we will use in most of the proofs of the
main theorems (including the ones for $\L(H)$-modules involving the extensions $K/K^H$, which
are not, strictly speaking, control theorems since the base field will not be allowed to vary).
Finally, control theorems for $\L(G)$-modules sometimes allow us to obtain results on general
$\l$-adic Lie extensions, regardless of $G$ containing or not an arithmetically interesting subgroup $H$.\\

\noindent We recall that recent papers have considered similar and/or more general topics using
different (somehow more sophisticated) techniques. For example, in \cite{OT}, the authors study the
structure of Selmer groups using syntomic cohomology for $p$-adic Lie extensions of fields of
characteristic $p$ containing the unique $\Z_p$-extension of the constant field.
Moreover \cite{Wi} provides a comprehensive study of the whole theory
(including a proof of the main conjecture) in the language of schemes, for $\l$-adic extensions of
a separated scheme $X$ of finite type over the field $\F_{p^e}$; his proof uses K-theory, Waldhausen
categories (and higher $K$-groups) and other cohomological tools. Another approach to the main conjecture
(for $\Z_\l^d$-extensions) is provided in \cite{LLTT}.\\
Our method basically only requires a careful study of Galois cohomology groups (both for local and global
fields), hence control theorems, in addition to being interesting in their own right, provide what
can be now considered as an ``elementary approach'' to the study of Selmer groups. Nevertheless
this approach, fit also to investigate some issues of the analytic side of Iwasawa theory (as done in \cite{Pa}),
guarantees a lot of information on the structure of Selmer groups and, in particular,
many cases in which Conjecture 5.1 of \cite{CFKSV} is verified (see Corollaries \ref{FinGenH} and
\ref{ModStrl=p}).\\

\noindent Here is a summary of the paper. After recalling (in Section \ref{SecNot}) the main
objects and the setting we will work with, we shall start with the case $\l\neq p$ proving the
control theorem in Section \ref{SecCTlneqp} and examining its consequences on the $\L(G)$-structure
of $Sel_A(K)_\l^\vee$ in Section \ref{SecModStrlneqp}. In particular we shall prove the following
(see Theorem \ref{CTlnotp})

\begin{thm}
For any finite extension $F'/F$ contained in $K$, the kernel and cokernel of the map
\[ a_{K/F'}\,:\, Sel_A(F')_\l \longrightarrow Sel_A(K)_\l^{\Gal(K/F')} \]
are cofinitely generated $\Z_\l$-modules. If all primes in $S$ and all primes of bad reduction
have decomposition groups open in $G$, then the coranks of kernels and cokernels
are bounded independently of $F'\,$. Moreover if $A[\l^\infty](K)$ is finite, then
such kernels and cokernels are of finite order.
\end{thm}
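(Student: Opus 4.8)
The plan is to run the classical Mazur--Greenberg argument around the fundamental diagram. Fix a finite $F'/F$ inside $K$, put $G'=\Gal(K/F')$, and let $\Sigma$ be the finite set of places of $F$ consisting of $S$ together with the places of bad reduction of $A$, with $\Sigma'$ its set of places in $F'$. Recalling from Section~\ref{SecNot} that $Sel_A(-)_\l$ is the kernel of a global-to-local map $\lambda_{(-)}$ into $\bigoplus_v\mathcal{H}_v$, where $\mathcal{H}_v=H^1((-)_v,A[\l^\infty])/L_v$ and $L_v$ is the local Selmer condition, the first step is to write down the commutative diagram
\[
\begin{CD}
0 @>>> Sel_A(F')_\l @>>> H^1(F'_\Sigma/F',A[\l^\infty]) @>>> \operatorname{im}(\lambda_{F'})\\
@. @VV{a_{K/F'}}V @VV{b_{K/F'}}V @VV{c_{K/F'}}V\\
0 @>>> Sel_A(K)_\l^{G'} @>>> H^1(F'_\Sigma/K,A[\l^\infty])^{G'} @>>> \operatorname{im}(\lambda_{K})^{G'}
\end{CD}
\]
with restriction maps as verticals: the top row is short exact, the bottom row (taking $G'$-invariants) is left exact, and $Sel_A(F')_\l\subseteq H^1(F'_\Sigma/F',A[\l^\infty])$, $Sel_A(K)_\l\subseteq H^1(F'_\Sigma/K,A[\l^\infty])$ because, $\l$ being prime to $p$, cohomology classes are automatically unramified outside $\Sigma$ and the Selmer conditions at places of good reduction are vacuous (Weil bounds), so that the relevant local terms are indexed by the finite set $\Sigma'$. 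A diagram chase then yields an injection $\ker a_{K/F'}\iri\ker b_{K/F'}$ together with an exact sequence $\ker b_{K/F'}\to\ker c_{K/F'}\to\operatorname{coker}a_{K/F'}\to\operatorname{coker}b_{K/F'}$, reducing the theorem to controlling $\ker b_{K/F'}$, $\operatorname{coker}b_{K/F'}$ and $\ker c_{K/F'}$.

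For the global terms I would use inflation--restriction: $\ker b_{K/F'}=H^1(G',A[\l^\infty](K))$ and $\operatorname{coker}b_{K/F'}\iri H^2(G',A[\l^\infty](K))$. Since $G'$ is an open subgroup of the compact $\l$-adic Lie group $G$ --- hence again a compact $\l$-adic Lie group, of the same dimension $d=\dim G$ --- and $M:=A[\l^\infty](K)$ is a cofinitely generated $\Z_\l$-module (an extension of a divisible $(\Q_\l/\Z_\l)^r$, $r\le 2g$, by a finite module), a standard dévissage should show that each $H^i(G',M)$ is a cofinitely generated $\Z_\l$-module, finite when $M$ is finite, of $\Z_\l$-corank bounded independently of $F'$. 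Here one uses that $H^i(G',-)$ commutes with the colimit $(\Q_\l/\Z_\l)^r=\dl{n}(\Z/\l^n)^r$, that $H^i(G',\F_\l)$ is finite for a compact $\l$-adic Lie group (Lazard), that $H^i(G',\Q_\l^r)$ has $\Q_\l$-dimension at most $r\binom{d}{i}$, and that all open subgroups of $G$ share the same $\Q_\l$-Lie algebra --- which makes the corank bound uniform in $F'$.

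For the local terms, fix $w\mid v$ in $K$ with $v\in\Sigma'$ and let $G'_w=\Gal(K_w/F'_v)\le G$ be the decomposition group; Shapiro's lemma identifies the $v$-component of $c_{K/F'}$ with the restriction $\mathcal{H}_v(F')\to\mathcal{H}_w(K)^{G'_w}$. Since $\l$ differs from the residue characteristic $p$, the formal group of $A$ gives $A(F'_v)\otimes\Q_\l/\Z_\l=0=A(K_w)\otimes\Q_\l/\Z_\l$, so the local conditions reduce to the unramified ones and, via the Kummer and inflation--restriction sequences, the $v$-component of $\ker c_{K/F'}$ is built out of $H^1(G'_w,A[\l^\infty](K_w))$ together with the cohomology of residual Galois groups of $K_w/F'_v$. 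As $G'_w$ is a closed subgroup of $G$, hence a compact $\l$-adic Lie group, and $A[\l^\infty](K_w)$ is cofinitely generated over $\Z_\l$, the dévissage above applies, and since $\Sigma'$ is finite for fixed $F'$ we get that $\ker c_{K/F'}$, hence $\ker a_{K/F'}$ and $\operatorname{coker}a_{K/F'}$, are cofinitely generated over $\Z_\l$ --- the first assertion. If moreover each $v\in\Sigma$ has decomposition group $D_v$ open in $G$, then $v$ has at most $[G:D_v]$ places above it in $K$, hence in $F'$, so $\#\Sigma'\le\sum_{v\in\Sigma}[G:D_v]$ is bounded independently of $F'$, while each $G'_w$ is then open in $G$ and $\corank_{\Z_\l}H^1(G'_w,A[\l^\infty](K_w))\le 2gd$; combining with the global bounds gives the asserted uniform bound on the coranks of $\ker a_{K/F'}$ and $\operatorname{coker}a_{K/F'}$. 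Finally, if $A[\l^\infty](K)$ is finite then $\ker b_{K/F'}$ and $\operatorname{coker}b_{K/F'}$ are finite by the second paragraph, and one checks that $A[\l^\infty](K_w)$ is finite for the relevant $w$ as well --- at places of good reduction because Frobenius acts without eigenvalue $1$ on the Tate module of $A$ (Weil), and at the remaining (bad) places using the constrained structure of the local $\l$-adic Lie extension $K_w/F'_v$ (tame, procyclic inertia, as $\l\ne p$) --- so $\ker c_{K/F'}$ is a finite sum of finite groups and $\operatorname{coker}a_{K/F'}$ is finite.

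The hard part will be the local step: tying $\ker c_{K/F'}$ precisely to the Galois cohomology of the decomposition groups $G'_w$ while handling uniformly the range of $\l$-adic-Lie shapes these can take (dimension anything from $0$ to $d$), and, for the last assertion, pinning down the finiteness of $A[\l^\infty](K_w)$ at the places of $\Sigma$. A subsidiary point requiring care is the exactness of the bottom row of the fundamental diagram and its compatibility with $G'$-invariants, the Selmer group over $K$ being a direct limit along an infinite tower.
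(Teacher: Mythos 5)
Your treatment of the first two assertions tracks the paper's proof closely: the same fundamental diagram and snake lemma, Hochschild--Serre for the global map $b_{K/F'}$, a d\'evissage bounding $H^i(\V,M)$ for closed subgroups $\V$ of $G$ and cofinitely generated $M$ (the paper's Lemma \ref{CohoBound}, proved there by counting generators and relations of $\V$ rather than via Lie-algebra cohomology, but to the same effect), triviality of the local kernels at unramified places of good reduction, and counting places above $\Sigma$ via the index of the decomposition groups. That part is sound.

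The gap is in the final assertion. You reduce the finiteness of $Ker(c_{K/F'})$ to the claim that $A[\l^\infty](K_w)$ is finite for $w$ above the places of $\Sigma$, to be ``checked'' from finiteness of $A[\l^\infty](K)$ plus the structure of $K_w/F'_{v'}$. This claim is false in general: $K_w$ typically contains the unramified $\Z_\l$-extension of $F'_{v'}$, so its residue field is infinite, and already at a place of good reduction one has $A[\l^\infty](K_w)\simeq \ov{A}[\l^\infty](\F_{K_w})$, which generically has positive $\Z_\l$-corank (the $\l$-part of $|\ov{A}(\F_{q^{\l^n}})|$ is unbounded as $n\to\infty$); a similar phenomenon occurs at the bad places via $\boldsymbol{\mu}_{\l^\infty}(K_w)$. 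Global finiteness of $A[\l^\infty](K)$ gives no control over this local torsion, so your route to finiteness of the local kernels breaks down. The paper's argument avoids the issue entirely: for $\l\neq p$ the group $H^1(F'_{v'},A[\l^\infty])$ is itself \emph{finite}, of order $|A[\l^\infty](F'_{v'})|\,|B[\l^\infty](F'_{v'})|$, by local Tate duality together with the triviality of the local Euler--Poincar\'e characteristic in residue characteristic $p\neq\l$; since $Ker(d_w)=H^1(K_w/F'_{v'},A[\l^\infty](K_w))$ injects into it by inflation, the local kernels are \emph{always} finite (though not uniformly bounded), and the hypothesis that $A[\l^\infty](K)$ be finite is needed only for $Ker(b_{K/F'})$ and $Coker(b_{K/F'})$. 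You need this duality/Euler-characteristic input (or an equivalent, such as the paper's Proposition \ref{UnrPrilnotp} at unramified bad places) to close the argument.
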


\noindent An immediate application of Nakayama's Lemma will show that $Sel_A(K)_\l$ is a
cofinitely generated $\L(G)$-module. Then, in Section \ref{SecModStrlneqp}, we prove an
analogous statement (Theorem \ref{CTModlneqp}) for the kernel and cokernel of the map
\[ a_{K/K'}\,:\, Sel_A(K')_\l \longrightarrow Sel_A(K)_\l^{\Gal(K/K')} \ , \]
where $\Gal(K/K')=H$ and $G/H\simeq \Z_\l\,$. We derive from that the structure of
$Sel_A(K)_\l^\vee$ as a $\L(H)$-module and, as a consequence, some cases in which
$Sel_A(K)_\l^\vee$ is a torsion $\L(G)$-module. We also deal with the case
$K=F(A[\l^\infty])$ which can be included in this general case (with the only exception
of the ad hoc result of Theorem \ref{CTlnotpTorsion}).\\
In Sections \ref{SecCTl=p} and \ref{SecModStrl=p} we follow the same path and obtain analogous
results for the case $\l=p$, where definitions and statements need the use of flat cohomology
groups but most of the proofs only require the study of Galois cohomology groups (as in the
$\l\neq p$ case). The main difference will be the presence of nontrivial images for the Kummer
maps in the definition of the Selmer groups. In particular we shall show (see
Theorem \ref{mainthm2})

\begin{thm}
Assume that all ramified primes are of good ordinary or split multiplicative reduction,
then, for any finite extension $F'/F$ contained in $K$, the kernel and cokernel of the map
\[ a_{K/F'}\,:\, Sel_A(F')_p \longrightarrow Sel_A(K)_p^{\Gal(K/F')} \]
are cofinitely generated $\Z_p$-modules. If all primes in $S$ and all primes of bad reduction
have decomposition groups open in $G$, then the coranks of kernels and cokernels
are bounded independently of $F'\,$.
\end{thm}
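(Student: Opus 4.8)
The plan is to replay the argument of Theorem~\ref{CTlnotp} with two changes forced by $\l=p$: Galois cohomology of $A[\l^\infty]$ is replaced by flat (fppf) cohomology of $A[p^\infty]=\varinjlim A[p^n]$, and the images of the local Kummer maps $\k_v$, now genuinely nontrivial, must be carried along. For $L\in\{F',K\}$ one has $Sel_A(L)_p=\ker\lambda_L$ with
\[ \lambda_L\colon H^1_{fl}(L,A[p^\infty])\longrightarrow\bigoplus_v\mathcal{H}_v(L),\qquad \mathcal{H}_v(L):=H^1_{fl}(L_v,A[p^\infty])\big/\,\mathrm{Im}\,\k_v . \]
Comparing $\lambda_{F'}$ with the $\Gal(K/F')$-invariants of $\lambda_K$ produces a commutative diagram with exact rows whose vertical maps are the restrictions $a_{K/F'}$, $b_{K/F'}$ (on global $H^1$) and $c_{K/F'}=\bigoplus_v c_v$ (on the local terms). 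The diagram chase used in Theorem~\ref{CTlnotp} shows that $\ker a_{K/F'}$ embeds into $\ker b_{K/F'}$ and that $\mathrm{coker}\,a_{K/F'}$ is controlled by $\ker c_{K/F'}$ and $\mathrm{coker}\,b_{K/F'}$, so everything reduces to bounding the kernels and cokernels of $b_{K/F'}$ and $c_{K/F'}$.

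The terms attached to $b_{K/F'}$ are handled exactly as for $\l\neq p$, since over a field the fppf and \'etale $H^0$ of $A[p^\infty]$ coincide: by the Hochschild--Serre sequence $\ker b_{K/F'}=H^1(\Gal(K/F'),A[p^\infty](K))$ and $\mathrm{coker}\,b_{K/F'}$ embeds into $H^2(\Gal(K/F'),A[p^\infty](K))$. As $A[p^\infty](K)\subseteq A[p^\infty](\ov F)\cong(\Q_p/\Z_p)^r$ with $r\le g$, and $\Gal(K/F')$ is an open subgroup of the fixed compact $p$-adic Lie group $G$, these are cofinitely generated over $\Z_p$ with $\Z_p$-corank bounded only in terms of $g$ and $\dim G$, hence independently of $F'$.

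The genuinely new input is the local analysis. The infinitely many places $v\notin S$ of good reduction contribute nothing --- for them $K_w/F'_v$ is unramified and $c_v$ is injective, exactly as in the corresponding step of Theorem~\ref{CTlnotp}, save that one argues with fppf cohomology of the smooth N\'eron model over the henselization (and Lang's theorem over the finite residue field) in place of \'etale cohomology --- so only the finitely many $v\in S$ or of bad reduction matter. Fix such a $v$ and $w\mid v$. The good ordinary (resp. split multiplicative) hypothesis forces the connected part in $0\to A[p^\infty]^{0}\to A[p^\infty]\to A[p^\infty]^{\mathrm{et}}\to0$ over the completed local ring to be of multiplicative type (resp. a product of copies of $\mu_{p^{\infty}}$); this is what keeps $\mathcal{H}_v(F')$ cofinitely generated over $\Z_p$, because although $H^1_{fl}(F'_v,A[p^\infty])$ and $\mathrm{Im}\,\k_v$ are separately far from cofinitely generated (already $H^1_{fl}(F'_v,\mu_{p^{\infty}})\cong(F'_v)^\times\otimes\Q_p/\Z_p$ is not), Kummer theory for $\mu_{p^{\infty}}$ makes their large parts cancel in the quotient. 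The snake lemma applied to the surjection $H^1_{fl}(F'_v,A[p^\infty])\to\mathcal{H}_v(F')$ then bounds $\ker c_v$ by $H^1(\Gal(K_w/F'_v),A[p^\infty](K_w))$ together with a cokernel of local Kummer maps; since $A[p^\infty](K_w)$ has $\Z_p$-corank $\le g$, this is cofinitely generated, and if the decomposition group of $v$ is open in $G$ then $\Gal(K_w/F'_v)$ is an open subgroup of a fixed $p$-adic Lie group and, arguing as in Theorem~\ref{CTlnotp}, the relevant coranks are bounded uniformly in $F'$. Summing over the finitely many contributing $v$ and returning to the diagram chase yields both assertions.

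The main obstacle is exactly this local computation at the ramified and bad places in the $\l=p$ setting: one must control simultaneously the very large groups $H^1_{fl}(F'_v,A[p^\infty])$ and $\mathrm{Im}\,\k_v$ and show that $\mathcal{H}_v$, and the kernel of its restriction map, remain cofinitely generated, something the wild ramification of the $p$-adic Lie extension threatens to destroy; it is precisely the good ordinary / split multiplicative hypothesis --- which confines the connected part of $A[p^\infty]$ to multiplicative-type and \'etale $p$-divisible groups, whose flat cohomology is governed by Kummer theory of $\mu_{p^{\infty}}$ and by Galois cohomology of $\Q_p/\Z_p$ --- that makes the cancellation work; at a supersingular ramified place $\mathcal{H}_v$ would no longer be cofinitely generated and the method would fail. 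The remaining point, routine but genuinely flat-cohomological, is the injectivity of $c_v$ at the good-reduction unramified places: $A[p^n]$ is not \'etale in characteristic $p$, so one works with fppf cohomology of the N\'eron model rather than with Galois cohomology, but the conclusion is the same as for $\l\neq p$.
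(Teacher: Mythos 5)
Your global strategy (the fundamental diagram, the snake lemma, Hochschild--Serre for $b_{K/F'}$ with Lemma~\ref{CohoBound}, and the reduction of $Ker(c_{K/F'})$ to finitely many local terms) matches the paper. The gap is in the local analysis at the places that actually contribute, which is the heart of the theorem. Your own snake-lemma decomposition of $Ker(d_w)$ produces an extension of a piece of $\mathrm{coker}\bigl(Im\,\k_{v'}\to (Im\,\k_w)^{\Gal(K_w/F'_{v'})}\bigr)$ by a quotient of $H^1(K_w/F'_{v'},A[p^\infty](K_w))$; the second group is harmless, but the first is exactly where all the difficulty sits, and you never bound it. The assertion that ``Kummer theory for $\mu_{p^\infty}$ makes the large parts cancel in the quotient'' is the conclusion one wants, not an argument: at finite level $F'_{v'}$ and for a general $p$-adic Lie extension $K_w/F'_{v'}$ this cancellation is not automatic from the connected--\'etale sequence, and making it precise is essentially the content of Coates--Greenberg's theory of deeply ramified extensions, which you neither invoke nor reprove. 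The paper sidesteps this entirely: from the long exact sequence of $0\to A[p^n]\to A\to A\to 0$ it embeds $H^1_{fl}(\X_{L_w},A[p^\infty])/Im\,\k_w$ into $H^1_{fl}(\X_{L_w},A)[p^\infty]$, so that $Ker(d_w)\hookrightarrow H^1(K_w/F'_{v'},A(K_w))[p^\infty]$, a Galois cohomology group of the full group of points. It then bounds this group using the structure of $A(K_w)$: for good ordinary reduction, the sequence $\wh{A}(\mathcal{O}_{K_w})\hookrightarrow A(K_w)\twoheadrightarrow \ov{A}(\F_{K_w})$ together with Tan's identification $H^1(E,\wh{A}(\mathcal{O}_{\ov{E}}))\simeq Hom(\ov{B}[p^\infty](\F_E),\Q_p/\Z_p)$ (Lemma~\ref{RamPrl=p}, which also needs the inertia group to be nontrivial); for split multiplicative reduction, Mumford uniformization and Hilbert~90, giving an injection into $H^2(K_w/F'_{v'},\Z^g)\simeq ((\Gal(K_w/F'_{v'})^{ab})^\vee)^g$. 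None of this appears in your proposal, and without it (or a worked-out substitute for the Kummer-image cokernel) the proof is incomplete.

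A secondary point: you apply the good ordinary / split multiplicative hypothesis to every contributing place, but the hypothesis only concerns \emph{ramified} primes. Unramified primes of bad reduction can have arbitrary reduction type; the paper handles them separately via $H^1((F'_{v'})^{unr}/F'_{v'},\pi_0(\mathcal{A}(F'_{v'})_0))$, which is finite and of bounded order. Your treatment of the good-reduction unramified places (N\'eron model plus Lang's theorem) is fine and equivalent to the paper's use of \cite[Ch.~I, Proposition 3.8]{Mi1}.
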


\noindent As in the case $\l\neq p$ we immediately find that $Sel_A(K)_p^\vee$ is a
finitely generated $\L(G)$-module (this can be seen as the non-abelian
counterpart of \cite[Theorem 5]{T}). Moreover we will prove that $Sel_A(K)_p^\vee$ is
often $\L(G)$-torsion: when the ramified primes are of good ordinary reduction we
provide a direct proof (see Theorem \ref{Torsl=p}, which depends on the control Theorem
\ref{mainthm2}, for a precise statement), when the ramified primes are of split
multiplicative reduction we need to examine the structure of $Sel_A(K)_p^\vee$
as a $\L(H)$-module to get the result (see Corollary \ref{ModStrl=p}, which depends on the
control Theorem \ref{CTModl=p}).\\
We note that the extension $F(A[p^\infty])$ is not included here because it should require
the study of inseparable extensions which cannot be treated with the techniques used
in this paper.\\

\noindent{\bf Acknowledgments} The authors thank David Burns, Ki-Seng Tan and Otmar Venjakob
for useful discussions and comments on earlier drafts of this paper. We are grateful to John
Coates for pointing out (and providing) the thesis \cite{S}. The second named author wishes
to thank the Mathematische Institut of the University of Heidelberg for hospitality and for providing
a nice and challenging environment to start working on this project.

\section{Setting and notations}\label{SecNot}
For the convenience of the reader we recall here the main objects we shall deal with and give notations
and definitions for them. The notations will be fairly standard for Iwasawa theory so the expert reader
can simply skip this section and go back to it only when/if needed.

\subsection{Fields} For any field $L$ we let $G_L=\Gal(L^s/L)$ ($L^s$ a separable closure of $L$)
and $\X_L$ the scheme $Spec(L)$: they will essentially appear in Galois and flat cohomology groups
so whenever we write a scheme $\X$ we always mean $\X_{fl}\,$.

\noindent Let $F$ be a global function field of trascendence degree
one over its constant field $\F_F=\F_q$, where $q$ is a power of a fixed
prime $p\in \Z$. We put $\ov{F}$ for an algebraic closure of $F$ and $F^s\subset \ov{F}$
for a separable closure. \\
For any algebraic extension $L/F$ let $\M_L$ be the set
of places of $L$: for any $v\in \M_L$ we let $L_v$ be the completion of $L$ at $v$, $\mathcal{O}_v$
its ring of integers with maximal ideal $\mathfrak{m}_v$ and residue field $\mathbb{F}_{L_v}\,$.
Whenever we deal with a local field $E$ (or an algebraic extensions of such field) the above notations
will often be replaced by $\mathcal{O}_E\,$, $\mathfrak{m}_E$ and $\mathbb{F}_E\,$. \\
For any place $v\in \M_F$ we choose (and fix) an embedding $\ov{F} \iri \ov{F_v}$ (an algebraic closure
of $F_v\,$), in order to get a restriction map $G_{F_v}:=\Gal(\ov{F_v}/F_v) \iri G_F\,$. All
algebraic extensions of $F$ (resp. of $F_v\,$) will be assumed to be contained in $\ov{F}$ (resp.
$\ov{F_v}\,$).\\
In general we will deal with {\em $\l$-adic Lie extensions} $K/F$, i.e.,  Galois
extensions with Galois group an $\l$-adic Lie group. We always assume that our extensions
are unramified outside a finite set $S$ of primes of $\M_F\,$.

\subsection{$\l$-adic Lie groups} The most useful (for us) characterization of $\l$-adic analytic groups
is due to Lazard \cite{L} and states that: {\em a topological group $G$ has the structure of $\l$-adic
Lie group if and only if $G$ contains an open subgroup which is a uniform pro-$\l$ group} (see also
\cite[Theorem 8.32]{DdSMS}).

\noindent For the whole paper our $\l$-adic Lie group $G$ is the Galois group of a field
extension, so one must take into account that it is also compact and profinite. Compactness implies it has
finite rank and has an open, normal, uniform pro-$\l$ subgroup which is always finitely generated
(see \cite[Corollary 8.34]{DdSMS}). While being profinite means that every open subgroup is of finite index.

\noindent Finally, note that if $G$ is an $\l$-adic Lie group without points of order $\l$, then it has finite
$\l$-cohomological dimension, which is equal to its dimension as an $\l$-adic Lie group
(\cite[Corollaire (1) p. 413]{Se2}).

\noindent For all basic definitions and facts about profinite groups, the reader is referred to
\cite{DdSMS} and \cite{W}.

\subsection{Modules and duals} For any $\l$-adic Lie group $G$ we denote by
\[ \L(G) = \Z_\l[[G]] := \il{U} \Z_\l [G/U] \]
the associated {\em Iwasawa algebra} (the limit is on the open normal subgroups of $G$).
From Lazard's work (see \cite{L}), we know that $\L(G)$ is Noetherian and, if $G$ is pro-$\l$ and has no
elements of order $\l$, then $\L(G)$ is an integral domain. From \cite[Theorem 4.5]{DdSMS} we know
that, for a finitely generated powerful pro-$\l$ group, being torsion free is equivalent to being
uniform. Because of this when we need $\L(G)$ to be without zero divisors we will take a torsion free $G$.\\
For a $\L(G)$-module $M$, we denote its Pontrjagin dual by $M^{\vee}:=Hom_{cont}(M,\mathbb{C}^{\ast})$.
In the cases considered in this paper, $M$ will be a (mostly discrete) topological $\Z_\l$-module,
so that $M^{\vee}$ can be identified with $Hom_{cont}(M,\Q_\l/\Z_\l)$ and it has a natural structure of
$\Z_\l$-module (because the category of compact $\L(G)$-modules and the category of discrete
$\L(G)$-modules are both abelian and the Pontrjagin duality defines a contravariant equivalence
of categories between them).\\
The reader is reminded that to say that an $R$-module $M$ ($R$ any ring) is
{\em cofinitely generated over} $R$ means that $M^{\vee}$ is a finitely generated $R$-module.

\subsection{Selmer groups} Let $A$ be an abelian variety of dimension $g$ defined over $F$:
we denote by $B$ its dual abelian variety. For any positive integer
$n$ we let $A[n]$ be the scheme of $n$-torsion points and, for any prime $\l$, we put
$A[\l^\infty]:={\displaystyle \lim_{\rightarrow}\,} A[\l^n]\,$.\\

\noindent We define Selmer groups via the usual cohomological techniques and, since we deal mostly with
the flat scheme of torsion points, we shall use the flat cohomology groups $H^i_{fl}$ (for the
basic definitions see \cite[Ch. II and III]{Mi2}). Fix a prime $\l\in\Z$ and consider the exact
sequence
\[ 0 \rightarrow A[\l^n] \rightarrow A {\buildrel \ \l^n \over \longrightarrow} A \rightarrow 0 \]
and, for any finite algebraic extension $L/F$, take flat cohomology with respect to $\X_L$ to get
an injective {\em Kummer map}
\[ A(L)/\l^n A(L) \iri H^1_{fl}(\X_L\,,A[\l^n]) \ .\]
Taking direct limits one has an injective map
\[ \k : A(L) \otimes \Q_\l/\Z_\l \iri \dl{n} H^1_{fl}(\X_L\,,A[\l^n]):=H^1_{fl}(\X_L\,,A[\l^\infty]) \ .\]
Exactly in the same way one can define {\em local Kummer maps}
\[  \k_w : A(L_w) \otimes \Q_\l/\Z_\l \iri \dl{n} H^1_{fl}(\X_{L_w}\,,A[\l^n]):=
H^1_{fl}(\X_{L_w}\,,A[\l^\infty]) \]
for any place $w\in\M_L\,$.

\begin{defin}\label{DefSel}
{\em The} $\l$-part of the Selmer group {\em of $A$ over $L$ is defined to be
\[ Sel_A(L)_\l=Ker \left\{ H_{fl}^1(\X_L, A[\l^{\infty}]) \to \prod_{w\in\M_L}
H_{fl}^1(\X_{L_w}, A[\l^{\infty}])/Im\,\k_w \right\}\]
where the map is the product of the natural restrictions between cohomology groups.\\
For infinite extensions $\mathcal{L}/F$ the Selmer group $Sel_A(\mathcal{L})_\l$
is defined, as usual, via direct limits.}
\end{defin}

\noindent Letting $L$ vary through subextensions of $K/F$, the groups $Sel_A(L)_\l$ admit natural actions
by $\mathbb{Z}_\l$ (because of $A[\l^\infty]\,$) and by $G=\Gal(K/F)$. Hence they are modules over
the Iwasawa algebra $\L(G)$.

\noindent If $L/F$ is a finite extension the group $Sel_A(L)_\l$ is a cofinitely generated $\Z_\l$-module
(see, e.g. \cite[III.8 and III.9]{Mi1}). One can define the {\em Tate-Shafarevich group} $\ts(A/L)$
as the group that fits into the exact sequence
\[ A(L)\otimes \Q_\l/\Z_\l \iri Sel_A(L)_\l \sri \ts(A/L)[\l^\infty]\ .\]
The function field version of the Birch and Swinnerton-Dyer conjecture predicts $\ts(A/L)$ to be finite
for any finite extension $L$ of $F$ (relevant results in this direction can be found,
for example, in \cite{KT} and \cite{Sc}).
Assuming the BSD conjecture and taking Pontrjagin duals on the sequence above, one gets
\[ \rank_{\Z_\l}\,Sel_A(L)_\l^{\vee} = \rank_{\Z}\,A(L) \]
which provides motivation for the study of (duals of) Selmer groups (recall that the cohomology
groups $H_{fl}^i\,$, hence the Selmer groups, are endowed with the discrete topology).

\begin{rem}
{\em When $\l\neq p$ the torsion subschemes are Galois modules and we can define Selmer groups
via Galois cohomology since, in this case,
\[ H_{fl}^1(\X_L,A[\l^n])\simeq H_{et}^1(\X_L,A[\l^n])\simeq H^1(G_L,A[\l^n](\ov{F})) \]
(see \cite[III.3.9]{Mi2}). In order to lighten notations, each time we work with
$l\neq p$ we shall use the classical notation $H^i(L,\cdot)$ instead of $H^i(G_L,\cdot)$
and $H^i(L/E,\cdot)$ instead of $H^i(\Gal(L/E),\cdot)$. Moreover we write $A[n]$ for $A[n](\ov{F})$,
putting $A[\l^{\infty}]:=\bigcup A[\l^n]$. }
\end{rem}

\subsection{The fundamental diagram} We will consider $\l$-adic Lie extensions $K/F'$ (for any
finite extension $F'/F$) and study the kernels and cokernels of the natural restriction maps
\[ Sel_A(F')_\l \longrightarrow Sel_A(K)_\l^{\Gal(K/F')} \ .\]
We shall do this via the snake lemma applied to the following diagram
\begin{equation}\label{FondDiag}
\xymatrix{
 Sel_A(F')_\l \ar[d]^{a_{K/F'}} \ar@{^{(}->}[r] & H^1_{fl}(\X_{F'},A[\l^{\infty}]) \ar[d]^{b_{K/F'}} \ar@{->>}[r]
& \G_A(F') \ar[d]^{c_{K/F'}}\\
Sel_A(K)^{\Gal(K/F')}_\l \ar@{^{(}->}[r]
& H^1_{fl}(\X_K,A[\l^{\infty}])^{\Gal(K/F')} \ar[r] & \G_A(K)^{\Gal(K/F')} &  , }
\end{equation}
where, for any field $L$, we put
\[ \G_A(L)=Im \left\{ H^1_{fl}(\X_L, A[\l^{\infty}]) \to
\prod_{w\in\M_L} H^1_{fl}(\X_{L_w}, A[\l^{\infty}])/Im\,\k_w \right\}\ .\]

\section {Control theorem for $\l\neq p$.}\label{SecCTlneqp}
As pointed out by Coates and Greenberg in \cite[Proposition 4.1 and the subsequent Remark]{CG}, the image of
the Kummer map $\k_w$ is trivial for any $w\in\M_L$,
because $L_w$ has characteristic $p\neq \l$. Therefore, in this case,
the $\l$-part of the Selmer group is simply
\[ Sel_A(L)_\l=Ker \left\{ H^1(L, A[\l^{\infty}]) \to \prod_{w\in\M_L} H^1(L_w, A[\l^{\infty}]) \right\} \ .\]

The following lemma holds for any prime $\l$ (including $\l=p$) and we shall need it in most of our control theorems.

\begin{lem}\label{CohoBound}
Let $G$ be any compact $\l$-adic Lie group of finite dimension and let $M$ be a discrete $\L(G)$-module which
is cofinitely generated over $\Z_\l$ (resp. finite). Then, for any closed subgroup $\V$ of $G$,
the cohomology groups $H^1(\V,M)$ and $H^2(\V,M)$ are cofinitely generated (resp. finite) $\Z_\l$-modules as well
and their coranks (resp. their orders) are bounded independently of $\V$.
\end{lem}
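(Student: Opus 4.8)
The plan is to reduce everything to the standard cohomology-of-profinite-groups machinery, using the structure theory of $\l$-adic Lie groups recalled above. First I would observe that since $G$ is compact and $\l$-adic analytic, it contains an open normal uniform pro-$\l$ subgroup $G_0$ of finite rank, say $\dim G = d$; by \cite[Corollaire (1) p. 413]{Se2} such a $G_0$ (being pro-$\l$ and torsion free) has $\l$-cohomological dimension equal to $d$. For an arbitrary closed subgroup $\V \le G$, put $\V_0 = \V \cap G_0$; then $\V_0$ is a closed subgroup of the uniform group $G_0$, hence itself a torsion-free pro-$\l$ group of dimension $\le d$, so $\mathrm{cd}_\l(\V_0) \le d$. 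The index $[\V : \V_0]$ divides $[G : G_0]$, hence is bounded by a constant depending only on $G$. This is the key uniformity: every closed subgroup of $G$ has an open subgroup of $\l$-cohomological dimension $\le d$ and of index bounded by $N := [G:G_0]$.

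The second step is to control $H^i(\V_0, M)$ for $i = 1, 2$. Here $M$ is discrete and cofinitely generated over $\Z_\l$, so $M[\l]$ is finite with $\dim_{\F_\l} M[\l] \le r$ for some $r$ depending only on $M$ (equal to $\corank_{\Z_\l} M$ plus the size of the finite part, but any bound in terms of $M$ suffices). Since $\mathrm{cd}_\l(\V_0) \le d$ and $\V_0$ is a finitely generated pro-$\l$ group, $H^i(\V_0, M[\l])$ is a finite $\F_\l$-vector space whose dimension is bounded in terms of $d$, $r$ and the rank of $\V_0$ (hence in terms of $G$ and $M$ alone): one can see this, e.g., by choosing a minimal presentation of $\V_0$ and using that $H^1(\V_0, \F_\l) = \F_\l^{\rank \V_0}$ together with the fact that $H^i$ vanishes for $i > d$, or simply by noting $H^i(\V_0, M[\l])^\vee \simeq H_i(\V_0, M[\l]^\vee)$ is a subquotient of $\Z_\l[[\V_0]]^{b_i} \otimes M[\l]^\vee$ for the Betti numbers $b_i$ of the finite free resolution of $\Z_\l$ over $\Z_\l[[\V_0]]$, which exist and have length $\le d$. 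Then I would bootstrap from $M[\l]$ to $M[\l^\infty]$: writing $M$ (up to the finite part, which only adds a bounded contribution) as an extension built from copies of $\Q_\l/\Z_\l$, the long exact sequences attached to $0 \to M[\l] \to M[\l^\infty] \xrightarrow{\l} M[\l^\infty] \to 0$ show that $H^i(\V_0, M)$ is cofinitely generated over $\Z_\l$ with corank controlled by $\dim_{\F_\l} H^i(\V_0, M[\l])$; when $M$ is finite the same sequences (now with $M$ in place of $M[\l^\infty]$) give that $H^i(\V_0, M)$ is finite of order bounded by a power of $\dim_{\F_\l} H^i(\V_0, M[\l])$. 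In all cases the bound depends only on $G$ and $M$, not on $\V_0$.

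The final step passes from $\V_0$ to $\V$ via the Hochschild--Serre spectral sequence for $1 \to \V_0 \to \V \to \V/\V_0 \to 1$, where $\V/\V_0$ is a finite group of order dividing $N$. The terms $H^p(\V/\V_0, H^q(\V_0, M))$ contributing to $H^1$ and $H^2$ of $\V$ involve only $q \le 2$, and each $H^q(\V_0,M)$ is cofinitely generated (resp. finite) over $\Z_\l$ with the uniform bounds just obtained; taking cohomology of the finite group $\V/\V_0$ of bounded order preserves cofinite generation over $\Z_\l$ and only inflates coranks (resp. orders) by a factor bounded in terms of $N$. Assembling the (at most three) relevant graded pieces gives that $H^1(\V, M)$ and $H^2(\V, M)$ are cofinitely generated (resp. finite) $\Z_\l$-modules with coranks (resp. orders) bounded independently of $\V$. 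I expect the main technical point to be the uniform bound on $\dim_{\F_\l} H^i(\V_0, M[\l])$ over all closed subgroups $\V_0$ of a uniform pro-$\l$ group: this is where one genuinely uses that $\mathrm{cd}_\l \le d$ and that the minimal number of generators of $\V_0$ is bounded by $d = \dim G$, so that the Betti numbers of the $\Z_\l[[\V_0]]$-free resolution of $\Z_\l$ are bounded uniformly; everything else is a formal consequence of long exact sequences, finiteness of $[\V:\V_0]$, and Hochschild--Serre.
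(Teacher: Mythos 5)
Your plan is correct and is essentially the argument of the paper: the core in both cases is a uniform bound on $\dim_{\F_\l}H^i(\V,\Z/\l\Z)$ for $i=1,2$ over all closed subgroups (generators and relations of closed subgroups of a finite-rank $\l$-adic Lie group), followed by d\'evissage on $M$ through its finite part and the surjections $H^i(\V,M_{div}[\l])\sri H^i(\V,M_{div})[\l]$. The only differences are cosmetic: the paper reduces to pro-$\l$ subgroups via restriction to a pro-$\l$ Sylow rather than via Hochschild--Serre over $\V\cap G_0$, and it obtains the degree-two bound directly from \cite[Theorem 4.35]{DdSMS} instead of from $\mathrm{cd}_\l$ and a finite free resolution of $\Z_\l$ over $\L(\V_0)$.
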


\begin{proof}
Obviously we can restrict our attention to pro-$\l$ subgroups of $G$ and, by \cite[Ch. 1, Exercise 12]{DdSMS},
any group of this kind is contained in a pro-$\l$ Sylow subgroup $P_\l\,$. Moreover any pro-$\l$ Sylow is open by
\cite[Corollary 8.34]{DdSMS}. Let $\V$ be any closed pro-$\l$ subgroup of some pro-$\l$ Sylow $P_\l$ and put
\[ d_i(\V):= \dim_{\Z/\l\Z}\, H^i(\V,\Z/\l\Z)\qquad i=1,2\ .\]
Since $P_\l$ has finite rank (in the sense of \cite[Definition 3.12]{DdSMS}) the cardinalities of a minimal
set of topological generators for $\V$, i.e., the $d_1(\V)$'s, are all finite and bounded by $d_1(P_\l)$.
Moreover, since $P_\l$ contains a uniform (see \cite[Definition 4.1]{DdSMS}) open subgroup $\mathcal{U}$
(by \cite[Corollary 8.34]{DdSMS}), one has
\[ d_1(\V) \leqslant d_1(\mathcal{U}) = d := {\rm dimension\ of\ } G \]
for all closed subgroups $\V$ of $\mathcal{U}$. For $d_2(\V)$ (i.e., the numbers of relations for a
minimal set of topological generators of $\V$) the bound is provided by \cite[Theorem 4.35]{DdSMS}
(see also \cite[Ch. 4, Exercise 11]{DdSMS}) again only in terms of the dimension and rank of $G$
(for example, if $\mathcal{U}$ is as above, then $d_2(\mathcal{U}) = \frac{d(d-1)}{2}\,$).
We put $\tilde{d}_1=\tilde{d}_1(G)$ (resp. $\tilde{d}_2=\tilde{d}_2(G)\,$) as the upper bound for
all the $d_1(\V)$'s (resp. $d_2(\V)$'s) as $\V$ varies among the closed subgroups of any pro-$\l$
Sylow of $G$.

\noindent Let $M_{div}$ be the maximal divisible subgroup of $M$ and consider the finite quotient $M/M_{div}\,$.
By \cite[Ch. I, \S 4, Proposition 20]{SeCG} and the Corollaire right after it, the finite group
$M/M_{div}$ admits a $\V$-composition series with quotients isomorphic to $\Z/\l\Z$.
Hence, working as in \cite[Proposition 3.1]{G}, one immediately finds
\[ |H^i(\V,M/M_{div})|\leqslant |M/M_{div}|^{d_i(\V)} \leqslant |M/M_{div}|^{\tilde{d}_i} \]
for $i=1,2$.

\noindent Note that, if $M$ is finite (hence equal to $M/M_{div}\,$),
we have already completed the proof of the finiteness of the $H^i(\V,M)$.\\
Moreover, in this case the orders are also bounded independently of $\V\,$.\\

\noindent To deal with the divisible part note that $M_{div}[\l]$ is finite
(say of order $\l^\lambda\,$), thus, by what we have just proved,
\[ |H^i(\V,M_{div}[\l])| \leqslant \l^{\lambda\tilde{d}_i} \quad (i=1,2)\ .\]
The cohomology of the exact sequence
\[ M_{div}[\l] \iri M_{div} \buildrel \l \over \sri M_{div} \]
yields surjective morphisms
\[  H^i(\V,M_{div}[\l]) \sri H^i(\V,M_{div})[\l] \quad (i=1,2)\ ,\]
therefore the groups on the right have finite and bounded orders as well.\\
Hence the $H^i(\V,M_{div})$ are cofinitely generated $\Z_\l$-modules with coranks bounded by
\[ \tilde{d}_i\lambda = \tilde{d}_i\corank_{\Z_\l} M \]
(which is a bound for the whole $H^i(\V,M)$ since the $\Z_\l$-corank of the finite part is 0).
\end{proof}

\begin{thm}\label{CTlnotp}
With the above notations, for every finite extension $F'$ of $F$ contained in $K$, the kernels and
cokernels of the maps
\[ a_{K/F'}: Sel_A(F')_\l \to Sel_A(K)^{\Gal(K/F')}_\l \]
are cofinitely generated $\Z_\l$-modules (i.e., their Pontrjagin duals are finitely generated
$\Z_\l$-modules). If all primes in $S$ and all primes of bad reduction
have decomposition groups open in $G$, then the coranks of kernels and cokernels
are bounded independently of $F'\,$.
Moreover if $A[\l^\infty](K)$ is finite, then such kernels and cokernels are of finite order.
\end{thm}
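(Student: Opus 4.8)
The plan is to apply the snake lemma to the fundamental diagram \eqref{FondDiag} (in the $\l\neq p$ case, so that $\G_A(L)$ is the image of the restriction into $\prod_w H^1(L_w,A[\l^\infty])$) and control the three relevant maps $a_{K/F'}$, $b_{K/F'}$, $c_{K/F'}$. The kernel and cokernel of $a_{K/F'}$ are sandwiched between $\ker b_{K/F'}$, $\ker c_{K/F'}$ and $\mathrm{coker}\,b_{K/F'}$, so it suffices to bound the kernels and cokernels of $b$ and $c$. Set $H'=\Gal(K/F')$. The inflation-restriction sequence for $H^1(\X_{F'},A[\l^\infty])\to H^1(\X_K,A[\l^\infty])^{H'}$ identifies $\ker b_{K/F'}$ with $H^1(H',A[\l^\infty](K))$ and embeds $\mathrm{coker}\,b_{K/F'}$ into $H^2(H',A[\l^\infty](K))$. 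Since $A[\l^\infty](K)$ is a discrete $\L(H')$-module cofinitely generated over $\Z_\l$ (it is a quotient of $(\Q_\l/\Z_\l)^{2g}$), Lemma \ref{CohoBound} applies and gives that $\ker b_{K/F'}$ and $\mathrm{coker}\,b_{K/F'}$ are cofinitely generated $\Z_\l$-modules, with coranks bounded independently of $F'$ (the bound being $\tilde d_i(G)\cdot 2g$); moreover, if $A[\l^\infty](K)$ is finite the same lemma yields finiteness with bounded orders. This disposes of $b_{K/F'}$ in all three assertions.

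The main work is the local term $c_{K/F'}$, whose kernel injects into $\prod_{w\in\M_{F'}}\ker\bigl(H^1(F'_w,A[\l^\infty])\to H^1(K_{\mathfrak w},A[\l^\infty])^{H'_{\mathfrak w}}\bigr)$, where $\mathfrak w\mid w$ in $K$ and $H'_{\mathfrak w}$ is the corresponding decomposition group. By inflation-restriction again, the $w$-th factor is $H^1\bigl(H'_{\mathfrak w},A[\l^\infty](K_{\mathfrak w})\bigr)$. For a place $w\notin S$ of good reduction, $K_{\mathfrak w}/F'_w$ is unramified, so $H'_{\mathfrak w}$ is (topologically) procyclic generated by Frobenius, and a standard computation (as in the classical Mazur/Greenberg argument, e.g.\ via the fact that $H^1$ of a procyclic group acting on a finite module equals the coinvariants, combined with the good-reduction exact sequence) shows this $H^1$ vanishes for all but finitely many $w$ and is finite of bounded order for the rest — in particular it contributes nothing to the corank. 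For the finitely many bad places $w$ and $w\in S$, one uses Lemma \ref{CohoBound} applied to the local Galois module $A[\l^\infty](K_{\mathfrak w})$ over the decomposition group to get a cofinitely generated $\Z_\l$-module. The hypothesis ``decomposition groups open in $G$'' is precisely what is needed to obtain a bound on the local coranks that is uniform in $F'$: an open decomposition group has dimension equal to $\dim G$, so the $\tilde d_i$ bounds from Lemma \ref{CohoBound} are the same as for $G$ itself, and there are only finitely many such places. When $A[\l^\infty](K)$ is finite, so is each $A[\l^\infty](K_{\mathfrak w})$, hence every local $H^1$ is finite, the almost-all-vanishing argument shows the product is finite, and Lemma \ref{CohoBound} gives bounded orders; combined with the finiteness of $\ker b$ and $\mathrm{coker}\,b$, the snake lemma yields finite kernel and cokernel for $a_{K/F'}$.

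The hard part will be the uniform boundedness of the local contribution at the ramified and bad places: one must check that openness of the decomposition group genuinely forces the relevant cohomological invariants into the global bounds of Lemma \ref{CohoBound}, and that the ``all but finitely many $w$ contribute zero'' statement is genuinely uniform as $F'$ grows inside $K$ (the set $S$ together with the bad places is fixed, but the decomposition groups shrink and the residue fields grow, so one must verify the good-reduction vanishing is insensitive to this). Assembling the three bounds via the snake lemma is then routine: $\mathrm{coker}\,a_{K/F'}$ is a subquotient of $\mathrm{coker}\,b_{K/F'}$, and there is an exact sequence $0\to\ker a_{K/F'}\to\ker b_{K/F'}\to\ker c_{K/F'}$, so each cofiniteness/finiteness and each corank/order bound transfers directly.
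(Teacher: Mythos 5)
Your overall architecture (snake lemma on \eqref{FondDiag}, Hochschild--Serre/Inf--Res to identify $\ker b$, $\mathrm{coker}\,b$ and the local kernels, Lemma \ref{CohoBound} for the global terms, vanishing at unramified places of good reduction via the embedding of $\ker(d_w)$ into $H^1(K_w/F'_{v'},A(K_w))$ and \cite[Ch.~I, Prop.~3.8]{Mi1}) matches the paper, and it carries the first assertion and, with a slightly weaker bound, the second. The genuine gap is in the third assertion. Your treatment of the local terms at the places of $S^*$ applies Lemma \ref{CohoBound} to the module $A[\l^\infty](K_w)$ over the decomposition group, and you then claim that finiteness of $A[\l^\infty](K)$ forces finiteness of each $A[\l^\infty](K_w)$. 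That implication is false: $K_w$ is the union of the completions and is much larger than $K$ locally (for instance it may contain large unramified extensions of $F_v$ over which many $\l$-power torsion points become rational), so $A[\l^\infty](K_w)$ can be infinite while $A[\l^\infty](K)$ is finite. The paper itself treats these as independent hypotheses (see part~{\bf 3} of the Corollary after Proposition \ref{UnrPrilnotp} and Theorem \ref{CTModlneqp}), so your route does not deliver the finiteness of $Ker(a_{K/F'})$ and $Coker(a_{K/F'})$ under the stated hypothesis.

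The missing idea is that, because $\l\neq p$, the local kernels are finite \emph{unconditionally}: by \cite[Theorem 7.1.8]{NSW} the group $H^1(F'_{v'},A[\l^\infty])$ is finite, and the local Euler--Poincar\'e characteristic formula ($\chi=1$ since the module order is prime to the residue characteristic) together with local Tate duality gives $|H^1(F'_{v'},A[\l^\infty])|=|A[\l^\infty](F'_{v'})||B[\l^\infty](F'_{v'})|$; the inflation map $H^1(K_w/F'_{v'},A[\l^\infty](K_w))\iri H^1(F'_{v'},A[\l^\infty])$ then bounds each local kernel by a finite group, with no hypothesis on $A[\l^\infty](K_w)$. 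With that in hand, the hypothesis ``$A[\l^\infty](K)$ finite'' is needed only for $\ker b$ and $\mathrm{coker}\,b$, and the third assertion follows. (Two smaller points: your closing sentence that $\mathrm{coker}\,a_{K/F'}$ is a subquotient of $\mathrm{coker}\,b_{K/F'}$ is inaccurate --- the snake lemma gives $\ker c_{K/F'}\to \mathrm{coker}\,a_{K/F'}\to \mathrm{coker}\,b_{K/F'}$, so $\ker c$ genuinely contributes, as you correctly said earlier; and at unramified places of good reduction the local kernel in fact vanishes for \emph{all} such places, not merely all but finitely many.)
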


\begin{proof}
We shall use the snake lemma for the diagram \eqref{FondDiag}: hence, to prove the theorem, we are going
to bound the kernels and cokernels of the maps $b_{K/F'}$ and $c_{K/F'}$.\\

\subsection{The maps $b_{K/F'}$}\label{Mapblnotp}
From the Hochschild-Serre spectral sequence we have that
\[ Ker (b_{K/F'})\simeq H^1(K/F', A[\l^\infty](K))\quad\mathrm{and}
\quad Coker(b_{K/F'})\subseteq H^2(K/F', A[\l^\infty](K)) \ .\]
We can apply Lemma \ref{CohoBound} with $G=\Gal(K/F)$ and $M=A[\l^\infty](K)$.
Hence $Ker (b_{K/F'})$ and $Coker (b_{K/F'})$ are cofinitely generated $\Z_\l$-modules whose coranks are bounded by
\[ \tilde{d}_1(\Gal(K/F))\corank_{\Z_\l} A[\l^\infty](K)\quad{\rm and}\quad
\tilde{d}_2(\Gal(K/F))\corank_{\Z_\l} A[\l^\infty](K) \]
respectively. Moreover, if $A[\l^\infty](K)$ is finite, we have the bound
\[ |H^i(\V,A[\l^\infty](K))|\leqslant |A[\l^\infty](K)|^{\tilde{d}_i(\Gal(K/F))} \]
(for $i=1,2$).

\subsection{The maps $c_{K/F'}$}\label{Mapclnotp}
For every prime $v\in\M_F$ let $v'$ be a place of $F'$ lying above $v$. Observe that
\[ Ker(c_{K/F'})\iri \prod_{v'\in\M_{F'}}
\bigcap_{\begin{subarray}{c} w \in \M_K \\ w|v' \end{subarray} }\,Ker(d_w)\]
where
\[d_w: H^1(F'_{v'},A[\l^{\infty}])\to H^1(K_w,A[\l^{\infty}]) \]
and, by the Inf-Res sequence,
\[ Ker(d_w) = H^1(K_w/F'_{v'},A[\l^\infty](K_w)) \ .\]
From the Kummer exact sequence one can write the following diagram
\[ \xymatrix{\ar @{} [dr]
 H^1(F'_{v'},A[\l^{\infty}]) \ar[d]^{d_w} \ar@{^{(}->}[r] &
H^1(F'_{v'},A(F_{v}^s)) \ar[d]^{f_w} \\
H^1(K_w,A[\l^{\infty}]) \ar@{^{(}->}[r] & H^1(K_w,A(F_v^s)) } \]
and deduce from it the inclusion
\[ Ker(d_w)\iri Ker(f_w)\simeq H^1(K_w/F'_{v'}, A(K_w))\ . \]
Since this last group is obviously trivial if the prime $v'$ splits completely we limit ourselves to the
study of this local kernels for primes which are not totally split in $K/F'\,$.

\noindent If $v$ is a prime of good reduction for $A$ and unramified in $K/F'$, then
\cite[Ch. I, Proposition 3.8]{Mi1} yields \[ H^1((F'_{v'})^{unr}/F'_{v'},A((F'_{v'})^{unr}))=0 \ . \]
Via the inflation map one immediately gets $H^1( K_w/F'_{v'}, A(K_w))=0$.

\noindent Thus we are left with
\[ Ker(c_{K/F'})\iri \prod_{\begin{subarray}{c} v'\in\M_{F'} \\ v'|v\in S^* \end{subarray}}
\bigcap_{\begin{subarray}{c} w \in \M_K \\ w|v'\end{subarray}}\,Ker(d_w) \]
where $S^*$ is the finite set composed by:
\begin{itemize}
\item[-] all primes in $S$;
\item[-] all primes not in $S$ and of bad reduction for $A$.
\end{itemize}
To find bounds for these primes we shall use Tate's theorems on (local) duality and (local)
Euler-Poincar\'e characteristic.\\

\noindent By \cite[Theorem 7.1.8]{NSW}, the group $H^1(F'_{v'}, A[\l^\infty])$ is finite and we can
bound its order using the Euler characteristic
\[ \chi(F'_{v'},A[\l^\infty]):=
\frac{|H^0(F'_{v'}, A[\l^\infty])||H^2(F'_{v'}, A[\l^\infty])|}{|H^1(F'_{v'}, A[\l^\infty])|} \ .\]
From the pairing on cohomology induced by the Weil pairing (see, for example, \cite[Ch. I, Remark 3.5]{Mi1}),
the group $H^2(F'_{v'}, A[\l^\infty])$ is the (Pontrjagin) dual of $H^0(F'_{v'}, B[\l^\infty])$,
so all the orders in the formula are finite. Moreover, by \cite[Theorem 7.3.1]{NSW},
$\chi(F'_{v'},A[\l^\infty])=1$ \footnote{The statement is for finite modules (i.e., it's true for all
the modules $A[\l^n]$ for any $n$), but limits are allowed here since the numerator stabilizes,
i.e., there is an $n$ such that
\[ |A[\l^m](F'_{v'})||B[\l^m](F'_{v'})|=|A[\l^n](F'_{v'})||B[\l^n](F'_{v'})| \]
for all $m\ge n.$}, therefore
\[ |H^1(F'_{v'}, A[\l^\infty])|=|A[\l^\infty](F'_{v'})||B[\l^\infty](F'_{v'})| \ .\]
The inflation map
\[ H^1(K_w/F'_{v'},A[\l^\infty](K_w)) \iri H^1(F'_{v'}, A[\l^\infty]) \]
provides again the finiteness of the kernels (but note that here, in general, the orders are not bounded).
\end{proof}

\begin{rem}\ \begin{itemize}
\item[{\bf 1.}] {\em Note that the local kernels are always finite, the additional hypothesis on the finiteness
of $A[\l^\infty](K)$ was only used to bound the orders of $Ker(b_{K/F'})$ and $Coker(b_{K/F'})$.}
\item[{\bf 2.}] {\em Most of the bounds are independent of $F'$ (in particular the ones for $Ker(a_{K/F'})\,$),
but to get uniform bounds for $Coker(a_{K/F'})$ one also needs to bound the number of nontrivial
groups appearing in the product which contains $Ker(c_{K/F'})$. In particular one needs finitely
many nontrivial $Ker(d_w)$'s and the only way to get this is assuming that the decomposition groups
are open. Note that if there is at least one unramified prime of bad reduction such hypothesis on its
decomposition group immediately yields that $G$ contains a subgroup isomorphic to $\Z_\l$ with finite index.}
\end{itemize}
\end{rem}

We can be a bit more precise with the local bounds for the unramified primes of bad reduction thanks
to the following

\begin{prop}\label{UnrPrilnotp}
If $v$ is an unramified prime of bad reduction (with nontrivial decomposition group)
and $v'|v$, then $H^1(K_w/F'_{v'},A[\l^\infty](K_w))$ is finite and
its order is bounded independently of $F'\,$.
\end{prop}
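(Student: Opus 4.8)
The plan is to pin down the structure of the extension $K_w/F'_{v'}$ when $v$ is unramified, and to exploit that the decomposition group is nontrivial together with the Euler-characteristic machinery already set up in the proof of Theorem \ref{CTlnotp}. First I would observe that since $v$ is unramified in $K/F'$ and the decomposition group $G_w:=\Gal(K_w/F'_{v'})$ is nontrivial, $G_w$ is a nontrivial closed subgroup of the procyclic group $\Gal((F'_{v'})^{unr}/F'_{v'})\simeq\wh{\Z}$; its pro-$\l$ part is therefore either trivial or isomorphic to $\Z_\l$, and only the latter contributes to $H^1(G_w,A[\l^\infty](K_w))$ after projecting to the $\l$-primary component. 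So we may assume $G_w\simeq\Z_\l$ (times a prime-to-$\l$ part that does not affect $\l$-cohomology), i.e. $K_w$ is, up to a prime-to-$\l$ extension, the unramified $\Z_\l$-extension of $F'_{v'}$.

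Next I would use the inflation embedding already exhibited in the proof of Theorem \ref{CTlnotp},
\[ H^1(K_w/F'_{v'},A[\l^\infty](K_w)) \iri H^1(F'_{v'},A[\l^\infty])\ ,\]
which gives finiteness for free. For the uniform bound, the point is that for a procyclic group $\Gamma\simeq\Z_\l$ acting on a cofinitely generated discrete $\Z_\l$-module $N$ one has $H^1(\Gamma,N)\simeq N_\Gamma$ (coinvariants), which is dual to $N^{\vee,\Gamma}$, hence of order at most $|N^{\vee}/\text{(divisible part)}|$ times a bounded factor — in any case bounded in terms of $N$ alone, independently of how $\Gamma$ sits inside the big group. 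Applying this with $N=A[\l^\infty](K_w)$, the bound on $|H^1(K_w/F'_{v'},A[\l^\infty](K_w))|$ is controlled by the size of the torsion $A[\l^\infty]((F'_{v'})^{unr})^{\vee}$ modulo its divisible part, together with $|A[\l^\infty](F'_{v'})|$; and by Euler-Poincaré (as already invoked, $\chi(F'_{v'},A[\l^\infty])=1$, so $|H^1(F'_{v'},A[\l^\infty])|=|A[\l^\infty](F'_{v'})||B[\l^\infty](F'_{v'})|$) these quantities are governed by the reduction type of $A$ at $v$ and by the residue field, not by the finite extension $F'/F$.

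The remaining work, and the main obstacle, is to make ``independently of $F'$'' genuine: a priori $|A[\l^\infty](F'_{v'})|$ and $|B[\l^\infty](F'_{v'})|$ could grow as $v'$ ranges over places of larger and larger $F'\subset K$. The key is that $K/F$ is unramified at $v$, so every $F'_{v'}$ with $F'\subset K$ is an \emph{unramified} extension of $F_v$, hence contained in $(F_v)^{unr}$; therefore $A[\l^\infty](F'_{v'})\subseteq A[\l^\infty]((F_v)^{unr})$, and the latter is a \emph{finite} group because the Frobenius acts on the $\l$-adic Tate module with no eigenvalue equal to $1$ on the relevant part once one passes to the connected component — more precisely, for an abelian variety over a finite field the group of $\l$-power torsion points over the maximal unramified extension is finite (the characteristic polynomial of Frobenius is $\ne (T-1)^{2g}$ unless $A$ is isotrivial, and the toric/abelian parts of the reduction contribute only finitely much $\l$-torsion over $\ov{\F_q}$). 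The same applies to $B$. This gives a single finite group containing all the $A[\l^\infty](F'_{v'})$ and $B[\l^\infty](F'_{v'})$, whence a uniform bound on $|H^1(F'_{v'},A[\l^\infty])|$ and a fortiori on its subgroup $H^1(K_w/F'_{v'},A[\l^\infty](K_w))$. One should be slightly careful in the (isotrivial) case where $A$ acquires extra torsion over $\ov{\F_q}$; but then $A[\l^\infty]((F_v)^{unr})$ is still finite because $(F_v)^{unr}$ has residue field $\ov{\F_q}$ and $A[\l^\infty](\ov{\F_q})$ is finite for an abelian variety over a finite field, so the argument goes through unchanged.
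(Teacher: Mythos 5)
The finiteness half of your argument is fine and agrees with the paper (inflation into $H^1(F'_{v'},A[\l^\infty])$, which is finite by Tate's local theorems). The uniform-bound half has a genuine gap, in fact two. First, $\Gal(K_w/F'_{v'})$ is a \emph{quotient} of $\Gal((F'_{v'})^{unr}/F'_{v'})\simeq\wh{\Z}$, not a closed subgroup, so its $\l$-part can be a nontrivial finite cyclic $\l$-group; this case does contribute to the $\l$-primary $H^1$ and cannot be discarded. The paper treats it separately, by inflating to $H^1(K_w(F'_{v'})^{\l,unr}/F'_{v'},\cdot)$ so as to reduce to the $\Z_\l$ situation.

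Second, and more seriously, your resolution of what you call the main obstacle rests on the claim that $A[\l^\infty]((F_v)^{unr})$ is finite, and this is false in general. Since $\l\neq p$, all $\l$-power roots of unity lie in $(F_v)^{unr}$, so for an elliptic curve with split multiplicative reduction the Tate parametrization already gives $\mu_{\l^\infty}\subseteq E[\l^\infty]((F_v)^{unr})$; more generally both the toric and the abelian parts of the special fibre of the N\'eron model contribute their full (infinite) $\l$-power torsion over $\ov{\F_q}$. Consequently $|A[\l^\infty](F'_{v'})|$ is unbounded as $F'$ grows inside $K$, and the Euler-characteristic quantity $|A[\l^\infty](F'_{v'})||B[\l^\infty](F'_{v'})|$ cannot yield a bound independent of $F'$ --- the paper explicitly warns of exactly this at the end of the proof of Theorem \ref{CTlnotp}, which is the whole reason Proposition \ref{UnrPrilnotp} exists. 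The correct route is the one you half-start with the coinvariants computation but then abandon: for $\Gamma\simeq\Z_\l$ acting on $N=A[\l^\infty](K_w)$ with $N^\Gamma=A[\l^\infty](F'_{v'})$ finite, one gets $|H^1(\Gamma,N)|\leqslant|N/N_{div}|$ (this is \cite[Remark 3.5]{BL2}); the point is that $N/N_{div}$ is finite and depends only on $K_w$, not on $F'$, even though $N$ itself and all the groups $A[\l^\infty](F'_{v'})$ are unbounded. You should apply that bound directly to $A[\l^\infty](K_w)$ (resp.\ to $A[\l^\infty](K_w(F_v)^{\l,unr})$ in the finite cyclic case) rather than routing through $\chi(F'_{v'},A[\l^\infty])=1$.
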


\begin{proof}
The $\l$-part of the Galois group $\Gal(K_w/F'_{v'})$ is a finite cyclic $\l$-group or it is isomorphic to
$\Z_\l\,$. Moreover
\[ A[\l^\infty](K_w)^{\Gal(K_w/F'_{v'})} = A[\l^\infty](F'_{v'}) \]
is finite, so, in the $\Z_\l$ case (i.e., when $K_w$ contains $(F'_{v'})^{\l,unr}$ the unramified
$\Z_\l$-extension of $F'_{v'}\,$), we can apply \cite[Remark 3.5]{BL2} to get
\[ |H^1(K_w/F'_{v'},A[\l^\infty](K_w))| \leqslant |A[\l^\infty](K_w)/A[\l^\infty](K_w)_{div}| \ .\]
A similar bound (independent from $F'\,$) holds for the finite case as well. One just uses the
inflation map to the group $H^1(K_w(F'_{v'})^{\l,unr}/F'_{v'}, A[\l^\infty](K_w(F'_{v'})^{\l,unr}))$
which has order bounded by $|A[\l^\infty](K_w(F_{v})^{\l,unr})/A[\l^\infty](K_w(F_{v})^{\l,unr})_{div}|$.
\end{proof}

We summarize the given bounds with the following

\begin{cor}
In the setting of Theorem \ref{CTlnotp} assume that all primes in $S$ and all primes of bad reduction
have decomposition groups open in $G$, then one has:\begin{itemize}
\item[{\bf 1.}] $\corank_{\Z_\l} Ker(a_{K/F'}) \leqslant \tilde{d}_1\corank_{\Z_\l} A[\l^\infty](K)$ and \\
$\corank_{\Z_\l} Coker(a_{K/F'}) \leqslant \tilde{d}_2\corank_{\Z_\l} A[\l^\infty](K)\,$;
\item[{\bf 2.}] if $A[\l^\infty](K)$ is finite, then $|Ker(a_{K/F'})| \leqslant |A[\l^\infty](K)|^{\tilde{d}_1}$ and
\[ |Coker(a_{K/F'})| \leqslant |A[\l^\infty](K)|^{\tilde{d}_2} \prod_{v'|v\in S^*-S}\,\alpha_v
\prod_{v'|v\in S}\, \beta_{v'}  \]
(where
\[ \alpha_v = |A[\l^\infty](K_w(F_{v})^{\l,unr})/A[\l^\infty](K_w(F_{v})^{\l,unr})_{div}| \]
and
\[ \beta_{v'} = |A[\l^\infty](F'_{v'})||B[\l^\infty](F'_{v'})|\ )\ ;\]
\item[{\bf 3.}] if $A[\l^\infty](K)$ is finite and, for all primes $v\in S$, $A[\l^\infty](K_w)$ is finite for any
prime $w|v'|v$, then $|Ker(a_{K/F'})| \leqslant |A[\l^\infty](K)|^{\tilde{d}_1}$ and
\[ |Coker(a_{K/F'})| \leqslant  |A[\l^\infty](K)|^{\tilde{d}_2} \prod_{v'|v\in S^*-S}\,\alpha_v
\prod_{v'|v\in S}\, |A[\l^\infty](K_w)||B[\l^\infty](K_w)|\ . \]
\end{itemize}
The bounds in {\bf 1} are independent of $F'$ (while the values appearing in {\bf 3} do not depend on $F'$
but the number of the factors in the product does).
\end{cor}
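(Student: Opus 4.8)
The plan is to assemble the stated bounds directly from the estimates already proved in Theorem~\ref{CTlnotp}, Proposition~\ref{UnrPrilnotp}, and Lemma~\ref{CohoBound}, feeding them through the snake lemma applied to diagram \eqref{FondDiag}. Recall that the snake lemma gives an exact sequence
\[ 0 \to Ker(a_{K/F'}) \to Ker(b_{K/F'}) \to Ker(c_{K/F'}) \to Coker(a_{K/F'}) \to Coker(b_{K/F'}), \]
so that $Ker(a_{K/F'})$ injects into $Ker(b_{K/F'})$ and $Coker(a_{K/F'})$ is an extension of a subquotient of $Coker(b_{K/F'})$ by a quotient of $Ker(c_{K/F'})$. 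Hence $\corank_{\Z_\l} Ker(a_{K/F'}) \le \corank_{\Z_\l} Ker(b_{K/F'})$ and $\corank_{\Z_\l} Coker(a_{K/F'}) \le \corank_{\Z_\l} Ker(c_{K/F'}) + \corank_{\Z_\l} Coker(b_{K/F'})$; similarly for orders in the finite case.

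For part \textbf{1}, I would simply recall from \S\ref{Mapblnotp} that $Ker(b_{K/F'}) \simeq H^1(K/F', A[\l^\infty](K))$ and $Coker(b_{K/F'}) \subseteq H^2(K/F', A[\l^\infty](K))$, which by Lemma~\ref{CohoBound} (with $G = \Gal(K/F)$, $M = A[\l^\infty](K)$) have $\Z_\l$-coranks bounded by $\tilde d_1 \corank_{\Z_\l} A[\l^\infty](K)$ and $\tilde d_2 \corank_{\Z_\l} A[\l^\infty](K)$ respectively. Under the openness hypothesis on decomposition groups, only finitely many of the local kernels $Ker(d_w)$ are nontrivial, and each such $Ker(d_w) \iri H^1(K_w/F'_{v'}, A[\l^\infty](K_w))$ is \emph{finite} (shown at the end of the proof of Theorem~\ref{CTlnotp}, and more precisely in Proposition~\ref{UnrPrilnotp} for unramified bad primes); thus $Ker(c_{K/F'})$ is finite and contributes corank $0$. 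Combining, the corank bounds in \textbf{1} follow and are manifestly independent of $F'$ since $\tilde d_1, \tilde d_2$ depend only on $G$.

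For parts \textbf{2} and \textbf{3}, when $A[\l^\infty](K)$ is finite the bound $|Ker(a_{K/F'})| \le |Ker(b_{K/F'})| \le |A[\l^\infty](K)|^{\tilde d_1}$ is immediate from \S\ref{Mapblnotp}. For the cokernel I would write $|Coker(a_{K/F'})| \le |Coker(b_{K/F'})| \cdot |Ker(c_{K/F'})| \le |A[\l^\infty](K)|^{\tilde d_2} \cdot |Ker(c_{K/F'})|$, and then bound $|Ker(c_{K/F'})| \le \prod_{v'|v \in S^*} |Ker(d_w)|$ over a choice of $w$ for each relevant $v'$ (the intersection over all $w|v'$ only makes the kernel smaller). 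Splitting $S^* = (S^* - S) \sqcup S$: for $v \in S^* - S$ (unramified, bad reduction) Proposition~\ref{UnrPrilnotp} gives the $F'$-independent bound $\alpha_v = |A[\l^\infty](K_w(F_v)^{\l,unr})/A[\l^\infty](K_w(F_v)^{\l,unr})_{div}|$; for $v \in S$ the generic bound from the proof of Theorem~\ref{CTlnotp} via the inflation $H^1(K_w/F'_{v'}, A[\l^\infty](K_w)) \iri H^1(F'_{v'}, A[\l^\infty])$ together with the Euler-characteristic computation $\chi(F'_{v'}, A[\l^\infty]) = 1$ gives $|H^1(F'_{v'}, A[\l^\infty])| = |A[\l^\infty](F'_{v'})||B[\l^\infty](F'_{v'})| =: \beta_{v'}$, yielding \textbf{2}. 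For \textbf{3}, under the extra hypothesis that $A[\l^\infty](K_w)$ is finite for $w|v'|v \in S$, one instead inflates to $H^1(G_{K_w}, \cdot)$: the kernel $H^1(K_w/F'_{v'}, A[\l^\infty](K_w))$ embeds (via inflation along $\Gal(F_v^s/K_w)$) into $H^1(F'_{v'}, A[\l^\infty])$ with the relevant bound becoming $|A[\l^\infty](K_w)||B[\l^\infty](K_w)|$ by the same local-duality argument applied over $K_w$ — I would need to check this uses only that $A[\l^\infty](K_w)$ is finite, so that $H^2(K_w, A[\l^\infty])$ is dual to the finite $H^0(K_w, B[\l^\infty])$ and the Euler characteristic is again trivial. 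The main obstacle, and the only point requiring genuine care, is precisely this last local bound for $v \in S$ in case \textbf{3}: ensuring the Euler-characteristic/duality input survives passage from the finite modules $A[\l^n]$ to the limit $A[\l^\infty]$ over the larger local field $K_w$, and that the resulting bound, while depending on the (finitely many) residue data of $K$ at places above $S$, does not depend on $F'$ — which is the content of the parenthetical remark closing the corollary.
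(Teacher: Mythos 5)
Your overall strategy is exactly the paper's: the corollary is stated there without a separate proof precisely because it is just the assembly, through the kernel--cokernel exact sequence of diagram \eqref{FondDiag}, of the bounds already established in the proof of Theorem \ref{CTlnotp} and in Proposition \ref{UnrPrilnotp}. Your treatment of part \textbf{1} and part \textbf{2} is correct and matches the intended argument, including the observation that $Ker(c_{K/F'})$ is finite (hence of corank $0$) and that under the openness hypothesis only finitely many local kernels survive, one factor per relevant $v'$.

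The one place where your argument goes wrong is part \textbf{3}. You propose to rerun the Euler--Poincar\'e characteristic and Tate local duality computation ``over $K_w$'' to get the factor $|A[\l^\infty](K_w)||B[\l^\infty](K_w)|$. But $K_w$ is in general an \emph{infinite} extension of the local field $F_v$ (the decomposition groups are open in the infinite group $G$), so $H^2(K_w,A[\l^\infty])$ need not be dual to $H^0(K_w,B[\l^\infty])$ and the formula $\chi(K_w,\cdot)=1$ is not available; moreover inflation sends $H^1(K_w/F'_{v'},A[\l^\infty](K_w))$ into $H^1(F'_{v'},A[\l^\infty])$, not into any cohomology group of $K_w$, so the duality input would be applied to the wrong field in any case. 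None of this machinery is needed: part \textbf{3} follows from part \textbf{2} by the trivial inclusions $A[\l^\infty](F'_{v'})\subseteq A[\l^\infty](K_w)$ and $B[\l^\infty](F'_{v'})\subseteq B[\l^\infty](K_w)$, which give $\beta_{v'}\leqslant |A[\l^\infty](K_w)||B[\l^\infty](K_w)|$; the finiteness hypothesis on $A[\l^\infty](K_w)$ (hence on $B[\l^\infty](K_w)$, as $A$ and $B$ are isogenous) merely guarantees that this new bound is finite, and its point is the $F'$-independence of each factor recorded in the closing parenthetical. With that replacement the proof is complete.
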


\subsection{The case $K=F(A[\l^\infty])$}\label{TorExt}
The finiteness of $A[\l^\infty](K)$ is not a necessary condition to get finite kernels and cokernels in the
control theorem. An important example is provided by the extension $K=F(A[\l^\infty])$ (this extensions has
been studied in details in \cite{S} in the case $A=E$, an elliptic curve).
Fix a basis of $A[\l^\infty]$ and consider the continuous Galois representation
\[ \rho:\ \Gal(F^s/F)\to GL_{2g}(\Z_\l) \]
provided by the action of $\Gal(F^s/F)$ on the chosen basis. Since $Ker(\rho)$ is given by the automorphisms
which fix $A[\l^\infty]$, one gets an isomorphism
\[ \Gal(K/F) \simeq  \rho(\Gal(F^s/F)) \]
and, consequently, an embedding
\[ \Gal(K/F)\hookrightarrow GL_{2g}(\Z_\l) \ .\]
Since $\Gal(F^s/F)$ is compact, its image under $\rho$ must be a compact subgroup of $GL_{2g}(\Z_\l)$.
As the latter is Hausdorff, $\rho(\Gal(F^s/F))$ is closed and so it is an $\l$-adic Lie group.

\begin{rem}
{\em When $A$ has genus 1 (i.e., it is an elliptic curve), a theorem of Igusa (analogous to Serre's
open image theorem) gives a more precise description of $\Gal(K/F)$ (for a precise statement and a proof see
\cite{BLV} and the references there). For a general abelian variety such {\em open image} statements are not
known: some results in this direction for abelian varieties of ``Hall type'' can be found in \cite{Ha} and
\cite{AGP} }.
\end{rem}

\begin{thm}\label{CTlnotpTorsion}
Let $K=F(A[\l^\infty])$, then the kernels and cokernels of the maps
\[ a_{K/F'}: Sel_A(F')_\l \to Sel_A(K)^{\Gal(K/F')}_\l \]
are finite.
\end{thm}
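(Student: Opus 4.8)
The plan is to run the same snake-lemma argument on the fundamental diagram \eqref{FondDiag} that was used for Theorem \ref{CTlnotp}, and to check that in the special case $K=F(A[\l^\infty])$ every contributing term is not merely cofinitely generated but actually finite. The point is that the only obstruction to finiteness in Theorem \ref{CTlnotp} was the possible infinite $\Z_\l$-corank of $A[\l^\infty](K)$; here $A[\l^\infty](K)=A[\l^\infty]\simeq(\Q_\l/\Z_\l)^{2g}$ is as far from finite as possible, so the naive bound fails and one must instead exploit the very particular structure of $G=\Gal(K/F)\hookrightarrow GL_{2g}(\Z_\l)$. First I would recall that $\Ker(a_{K/F'})\hookrightarrow\Ker(b_{K/F'})\simeq H^1(K/F',A[\l^\infty](K))$ and $\Coker(a_{K/F'})$ is controlled by $\Coker(b_{K/F'})\subseteq H^2(K/F',A[\l^\infty](K))$ together with $\Ker(c_{K/F'})$, so everything reduces to showing finiteness of $H^i(\Gal(K/F'),A[\l^\infty])$ for $i=1,2$ and finiteness of the relevant local kernels $\Ker(d_w)$.

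The heart of the matter is the global cohomology groups $H^i(\Gal(K/F'),A[\l^\infty])$. Here $\Gamma:=\Gal(K/F')$ is an open subgroup of a closed subgroup of $GL_{2g}(\Z_\l)$ acting on $V:=A[\l^\infty]\simeq(\Q_\l/\Z_\l)^{2g}$ via (the reduction of) the tautological representation. The key observation is that $V$ contains no nonzero $\Gamma$-invariants that are $\l$-divisible to infinite depth: more precisely, $A[\l^\infty](F')=V^\Gamma$ is finite (it is a subgroup of $\Q_\l/\Z_\l^{2g}$ cut out by the fixed points of an open subgroup of the full linear group, and one checks the Tate module $T_\l A$ has no nonzero $\Gamma$-fixed sublattice because $\Gamma$ contains elements — e.g.\ matrices close to, but not equal to, the identity, or suitable homotheties — with no nonzero fixed vectors mod $\l$). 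Granting that $V$ has finite $\Gamma$-invariants, I would argue that $H^i(\Gamma,V)$ is finite for all $i\le \mathrm{cd}_\l(\Gamma)$: write $V=\varinjlim V[\l^n]$, so $H^i(\Gamma,V)=\varinjlim H^i(\Gamma,V[\l^n])$; each $H^i(\Gamma,V[\l^n])$ is finite since $\Gamma$ has finite rank and $V[\l^n]$ is finite; and the transition maps stabilize because the Herbrand-quotient-type invariant $|H^i(\Gamma,V[\l^n])|$ is eventually constant once the torsion in $H^{i-1}$ and $H^i$ has been exhausted — equivalently, because $H^i(\Gamma,T_\l V)$ (with $T_\l V$ the Tate module) is a finitely generated $\Z_\l$-module whose corank equals that of $H^i(\Gamma,V)$, and a corank computation via the Euler characteristic / the vanishing of invariants forces that corank to be $0$. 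Concretely: the $\Z_\l$-corank of $H^i(\Gamma,V)$ is computed by $\dim_{\Q_\l} H^i(\Gamma, V\otimes\Q_\l)$, and since $\Gamma$ acts on $V\otimes\Q_\l=\Q_\l^{2g}$ with no nonzero fixed vectors (indeed the representation, being essentially all of a large subgroup of $GL_{2g}$, has no trivial subquotient over $\Q_\l$), all these cohomology groups vanish rationally, hence the corank is $0$, hence $H^i(\Gamma,V)$ is finite.

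For the local kernels $\Ker(d_w)\hookrightarrow\Ker(f_w)\simeq H^1(K_w/F'_{v'},A(K_w))$: as already observed in the proof of Theorem \ref{CTlnotp}, for primes of good reduction unramified in $K/F'$ this kernel vanishes, and for the finitely many troublesome primes $v'\mid v\in S^*$ one uses the finiteness of $H^1(F'_{v'},A[\l^\infty])$ (Tate local duality, $|H^1(F'_{v'},A[\l^\infty])|=|A[\l^\infty](F'_{v'})||B[\l^\infty](F'_{v'})|$) and the inflation embedding $\Ker(d_w)=H^1(K_w/F'_{v'},A[\l^\infty](K_w))\hookrightarrow H^1(F'_{v'},A[\l^\infty])$; since $A[\l^\infty](F'_{v'})$ and $B[\l^\infty](F'_{v'})$ are finite, each such local kernel is finite. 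Feeding all of this into the snake lemma gives finiteness of $\Ker(a_{K/F'})$ and $\Coker(a_{K/F'})$. \textbf{The main obstacle} is the global step: one must be careful that, although $A[\l^\infty](K)$ is infinite so the crude bound of Lemma \ref{CohoBound} is useless, the group nonetheless has \emph{finite $\Gamma$-cohomology} — this requires knowing (or proving) that the residual/rational representation of $\Gamma$ on $A[\l^\infty]$ has no nonzero invariants, which is exactly where the hypothesis $K=F(A[\l^\infty])$ (making $\Gamma$ open in the full image of Galois) is used, and it is the only non-formal input beyond the proof of Theorem \ref{CTlnotp}.
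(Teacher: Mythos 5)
You have the right architecture (snake lemma on diagram \eqref{FondDiag}, local kernels handled by Tate duality exactly as in Theorem \ref{CTlnotp}, everything reduced to finiteness of $H^i(\Gal(K/F'),A[\l^\infty])$ for $i=1,2$), and you correctly identify that the whole difficulty is the global step, where Lemma \ref{CohoBound} is useless because $A[\l^\infty](K)=A[\l^\infty]$ has positive corank. But your justification of that global step contains a genuine gap. You reduce finiteness of $H^i(\Gamma,A[\l^\infty])$ to the vanishing of $H^i(\Gamma,V_\l(A))$ (with $V_\l(A)=T_\l(A)\otimes\Q_\l$), and then assert that this vanishing holds ``since the representation has no trivial subquotient over $\Q_\l$.'' That implication is false for $i\geqslant 1$: absence of a trivial subquotient controls $H^0$ (and, by duality, top-degree cohomology), but not the intermediate cohomology of a general compact $\l$-adic Lie group. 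Via Lazard's isomorphism the question becomes one about Lie algebra cohomology, and already for the two-dimensional non-abelian Lie algebra $[x,y]=y$ the one-dimensional module on which $x$ acts by $1$ and $y$ by $0$ has no trivial subquotient yet has $H^1\neq 0$. So ``no invariants, hence all rational cohomology vanishes'' is not a formal step, and nothing in your argument replaces it.

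What actually closes the gap — and is the one non-formal input of the paper's proof — is Serre's theorem (\cite[Th\'eor\`eme 2]{Se3} and its Corollaire, valid over function fields by the Remarques there): $H^i(K/F',V_\l(A))=0$ for all $i\geqslant 0$ and the groups $H^i(K/F',T_\l(A))$ are finite. The vanishing rests on the specific structure of the image of Galois, namely that its Lie algebra contains the homotheties; a central element acting invertibly on $V_\l(A)$ forces all Lie algebra cohomology to vanish. Granting this, the paper's route is also cleaner than your limit/stabilization argument: the long exact sequence of $T_\l(A)\iri V_\l(A)\sri A[\l^\infty]$ immediately gives isomorphisms $H^{i-1}(K/F',A[\l^\infty])\simeq H^i(K/F',T_\l(A))$ for $i\geqslant 1$, and the right-hand groups are finite. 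One further small point: for $K=F(A[\l^\infty])$ the Serre--Tate criterion shows only primes of bad reduction ramify, so $S$ is automatically finite and there are no unramified primes of bad reduction to worry about in $Ker(c_{K/F'})$; your treatment of the local terms is otherwise consistent with the paper's.
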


\begin{proof}
Observe that thanks to \cite[Corollary 2 (b), p. 497]{ST} only primes of bad reduction for $A$ are ramified
in the extension $F(A[\l^\infty])/F$; so, in this case, the set $S$ is obviously finite and there are no
unramified primes of bad reduction.\\
The bounds for $Ker(c_{K/F'})$ are already in Theorem \ref{CTlnotp},
so we only need to provide bounds for $Ker(b_{K/F'})$ and $Coker(b_{K/F'})$, i.e., for $H^1(K/F',A[\l^\infty])$
and $H^2(K/F',A[\l^\infty])$ (note that here $A[\l^\infty](K)=A[\l^\infty]\,$).

\noindent Consider the Tate module
\[ T_\l(A) := \il n A[\l^n]\quad{\rm and\ put}\quad V_\l(A) := T_\l(A)\otimes_{\Z_\l} \Q_\l \ ,\]
then one has an exact sequence (of Galois modules)
\[ T_\l(A) \iri V_\l(A) \sri A[\l^\infty] \ .\]
By \cite[Th\'eor\`eme 2]{Se3} and the subsequent Corollaire (which hold in our setting as well, as noted
in the Remarques following the Corollaire), one has $H^i(K/F',V_\l(A))=0$ for any $i\geqslant 0$, moreover
the $H^i(K/F',T_\l(A))$ are all finite groups. Hence we get isomorphism
\[ H^i(K/F', T_\l(A))\simeq H^{i-1}(K/F', A[\l^\infty]) \quad{\rm for\ any}\ i\geqslant 1 \]
which provide the finiteness of $Ker(b_{K/F'})$ and $Coker(b_{K/F'})$.
\end{proof}

\section{$\L$-modules for $\l\neq p$}\label{SecModStrlneqp}
In this section we assume that our Galois group $G$ (still an $\l$-adic Lie group) has no elements
of order $\l$ and write $\L(G)$ for the associated Iwasawa algebra.
First we describe the structure of $Sel_A(K)_\l^\vee$ as a $\L(G)$-module, showing that it is a finitely
generated (sometimes torsion) $\L(G)$-module. Then, assuming that $G$ contains a subgroup $H$ such that
$G/H\simeq \Z_\l\,$, we will show that $Sel_A(K)_\l^\vee$ is finitely generated
as a $\L(H)$-module as well. For the latter we shall need a slightly modified version of the previous
Theorem \ref{CTlnotp}. As usual the main tool for the proof (along with the control theorem) is
the following generalization of Nakayama's Lemma.

\begin{thm}\label{BHNak}
Let $G$ be a topologically finitely generated, powerful and pro-$\l$ group and $I$
any proper ideal. Let $M$ be a compact $\L(G)$-module, then:
\begin{itemize}
\item[{\bf 1.}] if $M/IM$ is finitely generated as
a $\L(G)/I$-module then $M$ is finitely generated as a $\L(G)$-module;
\item[{\bf 2.}] if $G$ is soluble uniform and $M/I_G M$ is finite then $M$ is a torsion
$\L(G)$-module (where $I_G = Ker(\L(G)\to \Z_\l)$ is the augmentation ideal of $\L(G)\,$).
\end{itemize}
\end{thm}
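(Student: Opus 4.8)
The plan is to reduce both statements to the known topological version of Nakayama's Lemma for the Iwasawa algebra $\L(G)$ of a topologically finitely generated powerful pro-$\l$ group. The starting point is the fact (due to Lazard, see \cite{L}, and recorded in the discussion of $\L(G)$ above) that for such a $G$ the Iwasawa algebra $\L(G)$ is a Noetherian local ring: its Jacobson radical $\mathfrak{m}$ is the kernel of the augmentation $\L(G)\to\Z_\l\to\F_\l$, and every proper ideal $I$ is contained in $\mathfrak{m}$. The standard topological Nakayama Lemma then says: if $M$ is a compact $\L(G)$-module with $M/\mathfrak{m}M$ finitely generated over $\L(G)/\mathfrak{m}=\F_\l$, then $M$ is finitely generated over $\L(G)$; equivalently, a set of elements of $M$ whose images generate $M/\mathfrak{m}M$ already generates $M$. (Compactness is what allows the passage from the finite level to the limit, via the fact that $\L(G)=\il{U}\Z_\l[G/U]$ and $M=\il{U}M/U M$.)

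For part {\bf 1}, first I would observe that if $M/IM$ is finitely generated over $\L(G)/I$, then a fortiori $M/\mathfrak{m}M$, which is a quotient of $M/IM$ by the image of $\mathfrak{m}$, is finitely generated over $\F_\l$ — indeed $M/\mathfrak{m}M = (M/IM)\big/\big((\mathfrak{m}/I)(M/IM)\big)$, and a finitely generated module over $\L(G)/I$ has finitely generated $\F_\l$-vector-space quotient by the radical. Applying topological Nakayama to $M$ over $\L(G)$ then gives that $M$ is finitely generated over $\L(G)$. The one point requiring a little care is that $M/IM$ is still compact (it is the quotient of a compact module by the closure of $IM$, and for finitely generated $I$ over a Noetherian ring $IM$ is already closed), so that "finitely generated as a $\L(G)/I$-module" is the honest algebraic notion and the reduction mod $\mathfrak{m}$ behaves well.

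For part {\bf 2}, take $I=I_G$, the augmentation ideal, so $\L(G)/I_G=\Z_\l$ and $M/I_GM$ is a finitely generated $\Z_\l$-module which by hypothesis is finite. By part {\bf 1} (or directly) $M$ is a finitely generated $\L(G)$-module. Now I would use that, for $G$ soluble uniform, $\L(G)$ is a Noetherian integral domain (from Lazard's theory, as recalled above, torsion-free powerful implies uniform and $\L(G)$ is then a domain) which moreover satisfies the Ore condition, so it has a skew field of fractions $\mathrm{Frac}(\L(G))$; one must show $M\otimes_{\L(G)}\mathrm{Frac}(\L(G))=0$. The mechanism is: if $M$ were not $\L(G)$-torsion it would surject onto $\L(G)$ up to pseudo-isomorphism, or more concretely it contains a free rank-one $\L(G)$-submodule $N\cong\L(G)$; but then $N/I_GN\cong\Z_\l$ injects into (a subquotient related to) $M/I_GM$ — here the solubility of the uniform group $G$ is used to run an induction on the derived length, peeling off one $\Z_\l$-quotient at a time and using at each stage that $H_1$ of the relevant $\Z_\l$-quotient of the group acting on a free module contributes a free rank-one piece — contradicting finiteness of $M/I_GM$. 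I expect the genuine obstacle here to be making this last implication precise: the subtlety is that $-\otimes_{\L(G)}\Z_\l$ is only right exact, so one cannot naively say $N$ free $\Rightarrow$ $N/I_GN$ injects into $M/I_GM$; the clean way is to argue by induction on the derived series of $G$, using at each step the exact sequence relating $\L(G)$-coinvariants over a closed normal subgroup $G'$ with $G/G'\cong\Z_\l$ (a Wang-type exact sequence $0\to (M)_{G'}\otimes_{\Z_\l}\ldots$, i.e. $M_{G}$ sits in $M_{G'}\xrightarrow{\gamma-1}M_{G'}\to M_G\to 0$ with $\gamma$ a topological generator of $G/G'$), to show that non-torsionness propagates down to a non-finite $M_G$. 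This inductive bookkeeping with the coinvariants exact sequences, rather than any single hard estimate, is where the real work of the proof lies.
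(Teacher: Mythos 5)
The paper's own ``proof'' of this theorem is a one-line citation of the main results of \cite{BH}, so you are attempting something the paper does not actually carry out. Your part {\bf 1} is correct and is essentially the standard (and Balister--Howson's) argument: $\L(G)$ is Noetherian local with Jacobson radical $\mathfrak{m}=Ker(\L(G)\to\F_\l)$, every proper left ideal lies in $\mathfrak{m}$, $IM$ is closed because $I$ is finitely generated and $M$ is compact, and topological Nakayama over $\mathfrak{m}$ finishes it.

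Part {\bf 2}, however, has a genuine gap, and it sits exactly where you yourself say ``the real work of the proof lies'': the decisive implication ($M$ non-torsion $\Rightarrow$ $M/I_GM$ infinite) is never proved. The mechanism you propose --- locate a free submodule $N\cong\L(G)$ of $M$ and push it down the derived series via the right-exact sequences $M_{G'}\xrightarrow{\ \gamma-1\ }M_{G'}\to M_G\to 0$ --- does not close as stated: a \emph{submodule} of $M$ gives no control on the coinvariants $M_G$ (as you note), and the Wang sequence only identifies $M_G$ with $(M_{G'})_{G/G'}$; to run the induction you would need to know that non-torsionness of $M$ over $\L(G)$ forces $M_{G'}$ to be non-torsion (or of positive rank) over $\L(G/G')$, and that is not a formal consequence of right-exact homological algebra in degree zero. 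That this step is the whole difficulty is shown by the fact that the implication ``$M_G$ finite $\Rightarrow$ $M$ torsion'' is \emph{false} without solubility: the paper's own remark following this theorem cites the example on p.~228 of \cite{BH} of a non-torsion ideal $J$ of $\L(G')$ with $J/I_{G'}J$ finite for non-soluble $G'$. So any correct argument must invoke solubility at a specific, identifiable point, and your sketch never says what that point is beyond ``run an induction on the derived length''. In \cite{BH} the inductive step is closed with strictly more homological input than $H_0$ alone (control of the higher homology $H_i(G',M)$, i.e.\ a rank formula relating $\rank_{\L(G)}M$ to the $\L(G/G')$-ranks of all the $H_i(G',M)$, combined with the classical $\Z_\l[[T]]$ structure theory in the base case). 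As written, part {\bf 2} is a reasonable plan with its key lemma missing rather than a proof.
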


\begin{proof}
See the main results of \cite{BH}.
\end{proof}

\noindent Let $\F_p^{(\l)}$ be the $\Z_\l$-extension of $\F_p\,$. One of the most (arithmetically)
interesting example is provided by extensions $K/F$ containing $\F_p^{(\l)}F$, where one can take
$H=\Gal(K/\F_p^{(\l)}F)$. This can be considered as a very general setting thanks to the
following lemma (which was brought to our attention by David Burns).

\begin{lem}\label{BurnsVen}
Let $K/F$ be a $\l$-adic Lie extension with $\l\neq 2,p\,$. Then there exist a field $K'\supseteq K$ such that
\begin{itemize}
\item[{\bf 1.}] $K'$ contains $\F_p^{(\l)}F$;
\item[{\bf 2.}] $K'/F$ is unramified outside a finite set of places;
\item[{\bf 3.}] $\Gal(K'/F)$ is a compact $\l$-adic Lie group without elements of order $\l$.
\end{itemize}
\end{lem}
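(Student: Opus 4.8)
The plan is to build $K'$ by enlarging $K$ with an abelian $\ell$-adic Lie extension that supplies the constant $\mathbb{Z}_\ell$-extension, and then to kill the (necessarily finite, by compactness and $\ell\neq 2$) part of the Galois group that has order $\ell$. First I would form $L=K\cdot \mathbb{F}_p^{(\ell)}F$, which is a Galois extension of $F$ unramified outside $S\cup\{$places ramifying in $\mathbb{F}_p^{(\ell)}F\}$, and whose Galois group $\Gal(L/F)$ sits in an exact sequence with subquotients $\Gal(K/F)$ and a quotient of $\Gal(\mathbb{F}_p^{(\ell)}F/F)\simeq\mathbb{Z}_\ell$; being an extension of an $\ell$-adic Lie group by (a quotient of) $\mathbb{Z}_\ell$, it is again a compact $\ell$-adic Lie group, and it is unramified outside a finite set of places. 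This takes care of conditions \textbf{1} and \textbf{2}; what remains is to arrange condition \textbf{3}, the absence of elements of order $\ell$.

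The key structural input is that a compact $\ell$-adic Lie group $G'=\Gal(L/F)$ contains an open, normal, uniform (in particular torsion-free) pro-$\ell$ subgroup $U$, by Lazard's theorem as quoted in Section~\ref{SecNot} (\cite[Corollary 8.34]{DdSMS}). Any element of $G'$ of order $\ell$ maps to a nontrivial element of the finite group $G'/U$ (since $U$ is torsion-free), so torsion of order $\ell$ is detected in the finite quotient $G'/U$. The idea is then to replace $U$ by a slightly smaller open normal subgroup $N\trianglelefteq G'$, still torsion-free, such that $G'/N$ also has no elements of order $\ell$; then $K'=L^N$ works, because $\Gal(K'/F)=G'/N$ is a finite group with no $\ell$-torsion, hence (being a finite $\ell$-adic Lie group) trivially a compact $\ell$-adic Lie group without elements of order $\ell$ --- wait, that would make $\Gal(K'/F)$ finite and not genuinely Lie in the interesting sense. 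Let me instead keep $K'$ infinite: one takes $K'$ to be the fixed field of a suitable \emph{closed} normal torsion-free subgroup.

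More precisely, here is the argument I would actually run. By a theorem on $\ell$-adic Lie groups (or directly: a profinite group with an open uniform pro-$\ell$ subgroup and no element of order $2$ when $\ell\neq 2$), one shows that $G'$ has an open normal subgroup $W$ which is torsion-free and such that $G'/W$ has order prime to $\ell$; equivalently, the Sylow-$\ell$ considerations together with $\ell\neq 2$ let one split off the $\ell$-torsion. Concretely: let $U$ be the open normal uniform pro-$\ell$ subgroup above, let $P/U$ be a Sylow-$\ell$ subgroup of the finite group $G'/U$, and let $W$ be the intersection of all $G'$-conjugates of the preimage of a complement-type subgroup; the point is that an element of order $\ell$ in $G'$ lies in some conjugate of the preimage of $P/U$, and one arranges $W$ to meet each such preimage trivially modulo $U$. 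Since $U$ is torsion-free and $W\subseteq$ (preimages where the $\ell$-part is controlled), $W$ is torsion-free; since $W$ is normal and open, $K'':=L^W$ has $\Gal(K''/F)=G'/W$ finite of order prime to $\ell$. To keep things infinite and Lie, I would actually \emph{not} pass all the way down: instead take $K'=L^{W_\ell}$ where $W_\ell$ is a torsion-free \emph{closed} normal subgroup containing a cofinal system, chosen so that $\Gal(K'/F)=G'/W_\ell$ is an $\ell$-adic Lie group (as a quotient of $G'$ by a closed normal subgroup, using that the category of $\ell$-adic Lie groups is closed under such quotients) and has no $\ell$-torsion by construction; the condition $\ell\neq 2$ enters exactly to guarantee the relevant Sylow/complement surgery can be done (elements of order $2$ obstruct the naive splitting).

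The main obstacle is this last surgery: producing an \emph{open normal} torsion-free subgroup whose quotient also has no $\ell$-torsion, while retaining $\mathbb{F}_p^{(\ell)}F\subseteq K'$ and infiniteness of the Lie quotient. The cleanest route is to note that $G'$ has a uniform open normal pro-$\ell$ subgroup $U$, and to prove the elementary group-theoretic fact: a profinite group $G'$ containing such a $U$, with $\ell\neq 2$, contains an open normal subgroup $N$ with $U\subseteq N$, $N$ torsion-free, and $N$ of finite index --- the torsion-freeness of $N$ being the delicate point, handled by a descending intersection argument over conjugates of Sylow-$\ell$ preimages and the Schur--Zassenhaus-type splitting available away from $\ell=2$. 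Once $N$ is in hand, $K'=L^{N'}$ for an appropriate closed $N'\subseteq N$ (e.g. $N'=\bigcap$ of an open cofinal family inside $N$ compatible with keeping the constant extension) satisfies all three conditions, and the proof concludes by citing the closure of the class of compact $\ell$-adic Lie groups under closed quotients together with the torsion-freeness just arranged.
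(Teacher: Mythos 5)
There is a genuine gap, and it sits exactly at the core of the lemma. Forming $L=K\cdot\F_p^{(\l)}F$ does settle conditions \textbf{1} and \textbf{2} (the constant field extension is even everywhere unramified), but your strategy for condition \textbf{3} --- internal group-theoretic surgery on $G'=\Gal(L/F)$ to find a torsion-free normal subgroup and then taking its fixed field --- cannot succeed. Any $K'=L^{N}$ is a subfield of $L$, and the requirement $K'\supseteq K$ forces $N\subseteq\Gal(L/K)$, hence forces $\Gal(K/F)$ to be a quotient of $\Gal(K'/F)$. So you cannot ``remove'' the order-$\l$ elements of $\Gal(K/F)$; the only hope is that each of them lifts to a non-torsion element of $\Gal(K'/F)$, and inside your $L$ this generically fails: $\Gal(L/F)$ is the fibre product of $\Gal(K/F)$ with a quotient of $\Z_\l$ over $\Gal(K\cap\F_p^{(\l)}F/F)$, so any $\sigma\in\Gal(K/F)$ of order $\l$ acting trivially on $K\cap\F_p^{(\l)}F$ (e.g.\ all of them, if that intersection is $F$) lifts to the order-$\l$ element $(\sigma,0)$, which survives in \emph{every} intermediate $\Gal(K'/F)$ with $K\subseteq K'\subseteq L$. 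Your Sylow/complement manoeuvres do not address this: the open normal subgroups they produce have finite quotients whose fixed fields are finite over $F$ and do not contain $K$, and the final passage to a closed $N'$ is never actually constructed. More fundamentally, the problem is not purely group-theoretic --- one must realize a torsion-free cover of $\Gal(K/F)$ as a Galois group over $F$ that is still unramified outside a finite set, and that requires arithmetic input.

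The paper's proof (following Burns--Venjakob, Lemma 6.1 of \cite{BuVe}) goes in the opposite direction: it \emph{enlarges} $K$ beyond the constant field extension. One first adjoins all $\l$-power roots of unity, getting $K(\mu_{\l^\infty})\supseteq\F_p^{(\l)}F$, and then uses Kummer theory over $F(\mu_{\l^\infty})$ to write the degree-$\l$ subextensions of $K(\mu_{\l^\infty})/F(\mu_{\l^\infty})$ as generated by radicals $\alpha^{1/\l}$; adjoining the full towers $\alpha^{1/\l^n}$ of these Kummer generators makes every element of order $\l$ lift to an element of infinite order, while the generators can be chosen (essentially among $S$-units) so that only finitely many new places ramify. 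The hypothesis $\l\neq 2$ enters through this roots-of-unity/Kummer step, not through a Schur--Zassenhaus splitting. This field-enlargement mechanism is precisely what your proposal is missing.
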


\begin{proof} Since $\l\neq p$ the proof is the same of \cite[Lemma 6.1]{BuVe} (where the statement is
for number fields). One basically considers the field $K(\boldsymbol{\mu}_{\l^\infty})$ (which
obviously contains $\F_p^{(\l)}F$), where $\boldsymbol{\mu}_{\l^\infty}$ is the set of all $\l$-power
roots of unity, and then cuts out elements of order $\l$ in
$\Gal(K(\boldsymbol{\mu}_{\l^\infty})/F(\boldsymbol{\mu}_{\l^\infty}))$ using Kummer theory to describe
generators for subextensions of degree $\l$.
\end{proof}

\subsection{Structure of $Sel_A(K)_\l^\vee$ as $\L(G)$-module}
We are now ready to prove the following

\begin{thm}\label{fingenlneqp}
In the setting of Theorem \ref{CTlnotp}, $Sel_A(K)_\l^\vee$ is a
finitely generated $\L(G)$-module.
\end{thm}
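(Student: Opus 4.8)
The plan is to deduce the finite generation of $Sel_A(K)_\l^\vee$ over $\L(G)$ from the control theorem (Theorem \ref{CTlnotp}) via the generalized Nakayama Lemma (Theorem \ref{BHNak}, part {\bf 1}). The first step is to reduce to a convenient subgroup: since $G$ is a compact $\l$-adic Lie group with no elements of order $\l$, it contains an open, normal, uniform (hence topologically finitely generated, powerful and pro-$\l$) subgroup $G_0$, and $\L(G)$ is finitely generated as a $\L(G_0)$-module; thus it suffices to show $Sel_A(K)_\l^\vee$ is finitely generated over $\L(G_0)$. Set $F_0 = K^{G_0}$, a finite extension of $F$ contained in $K$, so that $G_0 = \Gal(K/F_0)$ and $\L(G_0) = \Z_\l[[G_0]]$ is the relevant Iwasawa algebra.

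Next I would apply Theorem \ref{BHNak}{\bf 1} with $M = Sel_A(K)_\l^\vee$ (a compact $\L(G_0)$-module, being the Pontrjagin dual of a discrete one) and $I = I_{G_0}$ the augmentation ideal, so $\L(G_0)/I_{G_0} \simeq \Z_\l$. The key point is the identification $M/I_{G_0}M \simeq \big(M_{G_0}\big) = \big((Sel_A(K)_\l)^{G_0}\big)^\vee = \big(Sel_A(K)_\l^{\Gal(K/F_0)}\big)^\vee$, using that Pontrjagin duality turns invariants into coinvariants. Now Theorem \ref{CTlnotp}, applied with $F' = F_0$, says the map $a_{K/F_0}\colon Sel_A(F_0)_\l \to Sel_A(K)_\l^{\Gal(K/F_0)}$ has kernel and cokernel that are cofinitely generated $\Z_\l$-modules; since $Sel_A(F_0)_\l$ is itself cofinitely generated over $\Z_\l$ (as $F_0/F$ is finite), it follows that $Sel_A(K)_\l^{\Gal(K/F_0)}$ is cofinitely generated over $\Z_\l$, i.e.\ $M/I_{G_0}M$ is a finitely generated $\Z_\l = \L(G_0)/I_{G_0}$-module. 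Theorem \ref{BHNak}{\bf 1} then gives that $M$ is finitely generated over $\L(G_0)$, hence over $\L(G)$.

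The only genuinely delicate point is making sure the hypotheses of Theorem \ref{BHNak} are met — in particular that one may choose $G_0$ uniform (hence powerful, pro-$\l$ and topologically finitely generated), which is exactly the content of \cite[Corollary 8.34]{DdSMS} recalled in Section \ref{SecNot}, and that $I_{G_0}$ is a proper ideal, which is clear. One should also be slightly careful that the control theorem is stated for the subfield $F'$ of $K/F$ and not literally for $F$ itself, so the reduction to $G_0$ (equivalently, to the base field $F_0$) is not merely cosmetic; but since $F_0/F$ is finite, $Sel_A(F_0)_\l$ remains cofinitely generated over $\Z_\l$ and the argument goes through unchanged. No hypothesis on decomposition groups is needed here, since we only use the qualitative statement that the kernel and cokernel of $a_{K/F_0}$ are cofinitely generated over $\Z_\l$, not the uniform bounds.
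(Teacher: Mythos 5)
Your proposal is correct and follows essentially the same route as the paper: reduce to an open uniform pro-$\l$ subgroup $G'$ with fixed field $F'$, use Theorem \ref{CTlnotp} together with the cofinite generation of $Sel_A(F')_\l$ to conclude that $(Sel_A(K)_\l^{G'})^\vee \simeq Sel_A(K)_\l^\vee/I_{G'}Sel_A(K)_\l^\vee$ is finitely generated over $\Z_\l$, and then apply Theorem \ref{BHNak}. The only cosmetic difference is that the paper spells out the four-term dual exact sequence linking $(Sel_A(K)_\l^{G'})^\vee$ to $Sel_A(F')_\l^\vee$ and the duals of the kernel and cokernel, and notes (as you do implicitly) that the hypothesis on elements of order $\l$ is not actually needed for this finite-generation statement.
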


\begin{proof} Consider any open, powerful and pro-$\l$ subgroup $G'$ of $G$. Since $\L(G)$ is
finitely ge\-ne\-ra\-ted over $\L(G')$ it is obvious that $Sel_A(K)_\l^\vee$ is finitely generated
over $\L(G)$ if and only if it is finitely generated also over $\L(G')$. So we are going to prove
the statement for such $G'$.\\
Consider the exact sequence
\begin{equation}\label{SuccFinGen}
0\to Coker (a_{K/F'})^\vee\to (Sel_A(K)_\l^{G'})^\vee\to Sel_A(F')_\l^\vee\to Ker(a_{K/F'})^\vee \to 0
\end{equation}
where $F'$ is the fixed field of $G'$. We know from Theorem \ref{CTlnotp} that  $Coker (a_{K/F'})^\vee$
and $Ker(a_{K/F'})^\vee$ are finitely generated $\Z_\l$-modules. Moreover, since $F'/F$ is finite,
$Sel_A(F')_\l^\vee$ is a finitely generated $\Z_\l$-module (\cite[III.8 and III.9]{Mi1}).
Hence $(Sel_A(K)_\l^{G'})^\vee$ is a finitely generated $\Z_\l$-module thanks to the exactness
of the sequence \eqref{SuccFinGen}. Since $(Sel_A(K)_\l^{G'})^\vee$ is isomorphic to
$Sel_A(K)_\l^\vee/I_{G'}Sel_A(K)_\l^\vee$ , where $I_{G'}$ is the augmentation ideal,
our claim follows from Theorem \ref{BHNak}.
\end{proof}

\begin{rem}
{\em Note that in the above proof we do not need any hypothesis on the elements
of $G$ of order $\l$: it works in general for any compact $\l$-adic Lie group $G$.
That additional hypothesis is necessary only to prove that $Sel_A(K)_\l^\vee$ is a
torsion module, because we need to avoid zero divisors.}
\end{rem}

\begin{thm}\label{TorGModlneqp}
Suppose that there exists an open uniform, pro-$\l$ and soluble subgroup $G'$ of $G$, with fixed field $F'\,$.
Assume that $A[\l^\infty](K)$ and $Sel_A(F')_\l^\vee$ are finite. Then $Sel_A(K)_\l^\vee$
is a torsion $\L(G')$-module.
\end{thm}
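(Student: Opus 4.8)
The plan is to mimic the proof of Theorem \ref{fingenlneqp}, but now exploit the stronger finiteness hypotheses together with part {\bf 2.} of the generalized Nakayama Lemma (Theorem \ref{BHNak}). First I would apply Theorem \ref{CTlnotp} with the base field $F'$: since $A[\l^\infty](K)$ is finite, the kernel and cokernel of $a_{K/F'}$ are of finite order. Then I would invoke the fundamental exact sequence
\[ 0\to Coker (a_{K/F'})^\vee\to (Sel_A(K)_\l^{G'})^\vee\to Sel_A(F')_\l^\vee\to Ker(a_{K/F'})^\vee \to 0 \]
obtained by dualizing the control sequence. The two outer terms are finite by the above, and $Sel_A(F')_\l^\vee$ is finite by hypothesis; hence by exactness $(Sel_A(K)_\l^{G'})^\vee$ is finite.

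Next I would identify $(Sel_A(K)_\l^{G'})^\vee$ with $Sel_A(K)_\l^\vee/I_{G'}Sel_A(K)_\l^\vee$, where $I_{G'}=Ker(\L(G')\to\Z_\l)$ is the augmentation ideal of $\L(G')$ (this is the same identification used in the proof of Theorem \ref{fingenlneqp}, coming from Pontrjagin duality turning invariants into coinvariants). Since $G'$ is, by assumption, open uniform, pro-$\l$ and soluble, part {\bf 2.} of Theorem \ref{BHNak} applies directly: with $M=Sel_A(K)_\l^\vee$ (a compact $\L(G')$-module, being the dual of a discrete module) and $I=I_{G'}$, the finiteness of $M/I_{G'}M$ forces $M$ to be $\L(G')$-torsion.

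Finally, to pass from $\L(G')$ to $\L(G)$ I would note that $\L(G)$ is finitely generated as a $\L(G')$-module (because $G'$ is open in $G$, so $[G:G']<\infty$), hence any $\L(G)$-module that is torsion over $\L(G')$ is also torsion over $\L(G)$; indeed a nonzero $\L(G)$-torsion-free submodule would contain a free $\L(G')$-submodule of positive rank, contradicting $\L(G')$-torsionness. Therefore $Sel_A(K)_\l^\vee$ is a torsion $\L(G')$-module, and a fortiori a torsion $\L(G)$-module. The only delicate point is checking that the hypotheses of Theorem \ref{BHNak}{\bf 2.} are genuinely met — in particular that the soluble uniform subgroup $G'$ can be chosen open (so that the control theorem with base $F'$ is available and $\L(G)$ is module-finite over $\L(G')$) — but this is exactly what is assumed in the statement, so the argument goes through with no further obstruction.
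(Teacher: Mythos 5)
Your argument is correct and is precisely the paper's intended proof: the paper's own proof is the one-line ``Just use Theorems \ref{CTlnotp} and \ref{BHNak}'', and you have filled in exactly the expected details (finiteness of $Sel_A(K)_\l^\vee/I_{G'}Sel_A(K)_\l^\vee$ from the control theorem over the base field $F'$ together with the two finiteness hypotheses, then part {\bf 2.} of the generalized Nakayama lemma applied to the compact $\L(G')$-module $Sel_A(K)_\l^\vee$). Your closing paragraph on passing from $\L(G')$-torsion to $\L(G)$-torsion is not required for the statement as given --- that step is the content of Corollary \ref{CTCorTor}, where the paper simply invokes Venjakob's definition of torsion for non-commutative Iwasawa algebras (which additionally needs $G$ without elements of order $\l$) rather than a rank argument --- but this extra remark does not affect the correctness of the proof.
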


\begin{proof}
Just use Theorems \ref{CTlnotp} and \ref{BHNak}.
\end{proof}

\begin{rem}
{\em The hypothesis on the existence of the soluble subgroup $G'$ is necessary. Indeed, when $G'$ is not soluble
it is possible to find a non torsion ideal $J$ of $\L(G')$ such that $J/I_{G'}J$ is finite (see \cite[p. 228]{BH}).
However, we observe that when $G$ is finitely generated (not only ``topologically'' finitely generated) such an open soluble
subgroup $G'$ always exists (see \cite{LMS}).}
\end{rem}

\noindent In the context of non-commutative Iwasawa algebras the right definition of {\em torsion module}
(\cite[Definition 2.6]{Vj}) can be stated in the following way: {\em a finitely generated $\L(G)$-module $M$
is a $\L(G)$-torsion module if and only if $M$ is a $\L(G')$-torsion module (classical meaning) for some
open pro-$\l$ subgroup $G'\subseteq G$ such that $\L(G')$ is integral}. So Theorem \ref{TorGModlneqp}
immediately yields

\begin{cor}\label{CTCorTor}
Let $G$ be without elements of order $\l$ and suppose that there exists an open, uniform, pro-$\l$ and soluble subgroup $G'$
of $G$. If $A[\l^\infty](K)$ and $Sel_A(F')_\l^\vee$ (where $F'$ is the fixed field of $G'\,$) are finite, then
$Sel_A(K)_\l^\vee$ is a torsion $\L(G)$-module.
\end{cor}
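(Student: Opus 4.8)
The plan is to deduce Corollary \ref{CTCorTor} directly from Theorem \ref{TorGModlneqp} together with the definition of torsion module for non-commutative Iwasawa algebras recalled just above the statement. The key observation is that the hypotheses of the Corollary are precisely those of Theorem \ref{TorGModlneqp}: we are given an open, uniform, pro-$\l$ and soluble subgroup $G'$ of $G$ with fixed field $F'$, and we assume $A[\l^\infty](K)$ and $Sel_A(F')_\l^\vee$ are finite. Thus the first step is simply to invoke Theorem \ref{TorGModlneqp} to conclude that $Sel_A(K)_\l^\vee$ is a torsion $\L(G')$-module in the classical sense.

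Next I would check that $\L(G')$ is an integral domain, so that the classical notion of torsion module over $\L(G')$ makes sense and the cited characterization (\cite[Definition 2.6]{Vj}) applies. Here I would use the facts recalled in the ``Modules and duals'' subsection: since $G'$ is pro-$\l$ and (being a subgroup of $G$, which has no elements of order $\l$) also has no elements of order $\l$, Lazard's theorem gives that $\L(G')$ has no zero divisors. One should also note, using Theorem \ref{fingenlneqp}, that $Sel_A(K)_\l^\vee$ is a finitely generated $\L(G)$-module, which is part of what is needed to speak of it being $\L(G)$-torsion.

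Finally, I would apply the quoted characterization of $\L(G)$-torsion modules: a finitely generated $\L(G)$-module $M$ is $\L(G)$-torsion if and only if it is $\L(G')$-torsion (classical sense) for some open pro-$\l$ subgroup $G'$ with $\L(G')$ integral. Our $G'$ qualifies (it is open, pro-$\l$, and $\L(G')$ is a domain), and we have shown $Sel_A(K)_\l^\vee$ is $\L(G')$-torsion, so the conclusion follows. I do not expect any serious obstacle here: the Corollary is essentially a repackaging of Theorem \ref{TorGModlneqp} in the language of non-commutative Iwasawa theory, the only point requiring a word of justification being that $\L(G')$ is integral, which is immediate from $G'$ being pro-$\l$ without $\l$-torsion. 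The proof will therefore be a one-line deduction citing Theorem \ref{TorGModlneqp} and \cite[Definition 2.6]{Vj}.
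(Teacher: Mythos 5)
Your proposal is correct and is exactly the paper's argument: the corollary is stated as an immediate consequence of Theorem \ref{TorGModlneqp} combined with the characterization of $\L(G)$-torsion modules from \cite[Definition 2.6]{Vj} quoted just before it. Your additional remarks (integrality of $\L(G')$ since $G'$ is pro-$\l$ without $\l$-torsion, and finite generation over $\L(G)$ via Theorem \ref{fingenlneqp}) are exactly the implicit justifications the paper relies on.
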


\subsection{Structure of $Sel_A(K)_\l^\vee$ as $\L(H)$-module}\label{Hmodlneqp}
Assume that $G$ contains a closed normal subgroup $H$ such that $G/H=\Gamma\simeq \Z_\l$.
We are going to prove that $Sel_A(K)_\l^\vee$ is a finitely generated $\L(H)$-module, as predicted by
Conjecture 5.1 of \cite{CFKSV}. First note that, letting $\F_p^{(\l)}$ be the unique $\Z_\l$-extension of $\F_p\,$,
by \cite[Proposition 4.3]{BL2}, one has $K':=K^H=\F_p^{(\l)}F$. Hence all primes of $F$ are unramified in
$K'$ and none of them is totally split.

\noindent As mentioned before we need to prove a slightly modified version of the Control Theorem.
We will work with the following diagram
\begin{equation}\label{DiagModCT}\xymatrix{\ar @{} [dr]
 Sel_A(K')_\l \ar[d]^{a} \ar@{^{(}->}[r] & H^1(K',A[\l^{\infty}]) \ar[d]^{b} \ar@{->>}[r] & \G_A(K') \ar[d]^{c}\\
Sel_A(K)^{H}_\l \ar@{^{(}->}[r]
& H^1(K,A[\l^{\infty}])^{H} \ar[r] & \G_A(K)^{H}} \end{equation}
similar to the diagram \eqref{FondDiag} except for the ``infinite level'' of the upper row, and we will again
apply the snake lemma.

\begin{thm}\label{CTModlneqp}
With the above notations the kernel and cokernel of the map
\[ a: Sel_A(K')_\l \to Sel_A(K)^{H}_\l \]
are cofinitely generated $\Z_\l$-modules. Moreover, if $A[\l^\infty](K)$ is finite and,
for any $w|w'|v\in S$, the group $A[\l^\infty](K_w)$ is finite as well, then $Ker(a)$ and
$Coker(a)$ are finite.
\end{thm}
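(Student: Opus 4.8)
The plan is to apply the snake lemma to diagram \eqref{DiagModCT} and bound the kernel and cokernel of $a$ through the maps $b$ and $c$, exactly as in the proof of Theorem \ref{CTlnotp}, but paying attention to the fact that the top row now lives at the infinite level $K'=\F_p^{(\l)}F$ rather than at a finite extension. First I would treat $b$: by the Hochschild–Serre spectral sequence for $K/K'$ with Galois group $H$, one has $Ker(b)\simeq H^1(H,A[\l^\infty](K))$ and $Coker(b)$ embeds into $H^2(H,A[\l^\infty](K))$. Since $H$ is a closed subgroup of the compact $\l$-adic Lie group $G$, Lemma \ref{CohoBound} applies directly with $\V=H$ and $M=A[\l^\infty](K)$: this shows $Ker(b)$ and $Coker(b)$ are cofinitely generated $\Z_\l$-modules, and finite (with orders bounded by $|A[\l^\infty](K)|^{\tilde d_i(G)}$) as soon as $A[\l^\infty](K)$ is finite. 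So the $b$-part goes through verbatim.

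Next I would handle $c$. As in Theorem \ref{CTlnotp}, $Ker(c)$ injects into a product over places $w'$ of $K'$ of intersections of local kernels $Ker(d_w)$, with
\[ d_w : H^1(K'_{w'},A[\l^\infty]) \to H^1(K_w,A[\l^\infty]),\qquad Ker(d_w)=H^1(K_w/K'_{w'},A[\l^\infty](K_w)), \]
and via the Kummer sequence $Ker(d_w)\iri H^1(K_w/K'_{w'},A(K_w))$, which vanishes when $w'$ splits completely in $K/K'$. For $v\notin S^*$ (good reduction, unramified), the local extension $K'_{w'}/F_v$ is contained in the maximal unramified extension, so by \cite[Ch. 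I, Proposition 3.8]{Mi1} the relevant $H^1$ of unramified cohomology vanishes and inflation kills $Ker(d_w)$; here one must note that $K'=\F_p^{(\l)}F$ contributes only unramified local behaviour, so this step is unaffected by passing to the infinite level. Thus $Ker(c)$ injects into a product over the finitely many $w'$ above primes $v\in S^*$. For such $w'$, I would bound $Ker(d_w)$ by inflating into $H^1(K'_{w'},A[\l^\infty])$ and bounding the latter. The new subtlety is that $K'_{w'}$ is now an infinite (unramified $\Z_\l$-) extension of $F_v$, so Tate's local Euler–Poincaré formula does not apply as stated; instead I would use that $H^1(K'_{w'},A[\l^\infty])$ is the direct limit of $H^1$ of the finite layers, and control it via the finiteness of $A[\l^\infty](K'_{w'})$ and $B[\l^\infty](K'_{w'})$ — or, more cleanly, first inflate $H^1(K_w/K'_{w'},A[\l^\infty](K_w))$ through $H^1(K_w/F_v,A[\l^\infty](K_w))$ into $H^1(F_v,A[\l^\infty])$, which is finite by \cite[Theorem 7.1.8]{NSW} and the Euler-characteristic computation already carried out in Theorem \ref{CTlnotp}. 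This shows $Ker(c)$ is finite unconditionally, provided we can ensure finitely many nontrivial terms — which holds because $S^*$ (equal to $S$ here, since $K'/F$ is unramified so there are no unramified ramified-in-$K'$ primes, and any unramified prime of bad reduction has open decomposition group by the remark following Theorem \ref{CTlnotp}) is finite and, over each $v\in S$, the set of $w'|v$ is finite as $K'/F$ has abelian Galois group $\simeq\Z_\l$ in which $v$ is not totally split, hence has open (finite-index) decomposition group.

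Finally I would assemble the pieces with the snake lemma: $Ker(a)$ injects into $Ker(b)$, which is cofinitely generated over $\Z_\l$, and $Coker(a)$ sits in an exact sequence involving $Coker(b)$ and $Ker(c)$, both cofinitely generated over $\Z_\l$; hence $Ker(a)$ and $Coker(a)$ are cofinitely generated $\Z_\l$-modules. For the finiteness statement, if $A[\l^\infty](K)$ is finite then $Ker(b)$ and $Coker(b)$ are finite by Lemma \ref{CohoBound}, $Ker(c)$ is finite by the local analysis above, and the extra hypothesis that $A[\l^\infty](K_w)$ be finite for $w|w'|v\in S$ is what pins down the (now possibly unbounded, but for fixed data finite) local contributions at the bad primes — one bounds $Ker(d_w)$ by $|A[\l^\infty](K_w)/A[\l^\infty](K_w)_{div}|$ using \cite[Remark 3.5]{BL2} applied to the $\Z_\l$-quotient of $\Gal(K_w/K'_{w'})$, exactly as in Proposition \ref{UnrPrilnotp}. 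The main obstacle I anticipate is precisely the local terms over $S$: because $K'_{w'}/F_v$ is infinite, one cannot quote the local Euler–Poincaré formula directly, and one must either route everything through $H^1(F_v,A[\l^\infty])$ by inflation or invoke the $\Z_\l$-extension finiteness results of \cite{BL2}; getting the finiteness (rather than just cofinite generation) cleanly is where the hypothesis on $A[\l^\infty](K_w)$ earns its keep.
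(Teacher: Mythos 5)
Your overall skeleton is the right one and matches the paper: snake lemma on diagram \eqref{DiagModCT}, Hochschild--Serre plus Lemma \ref{CohoBound} for the map $b$, and reduction of $Ker(c)$ to finitely many local kernels because no place of $F$ splits completely in $K'=\F_p^{(\l)}F$. The gap is in the local analysis. For a ramified prime, your ``more cleanly'' route --- inflating $H^1(K_w/K'_{w'},A[\l^\infty](K_w))$ ``through $H^1(K_w/F_v,A[\l^\infty](K_w))$ into $H^1(F_v,A[\l^\infty])$'' --- does not exist: $\Gal(K_w/K'_{w'})$ is a \emph{subgroup}, not a quotient, of $\Gal(K_w/F_v)$, so the natural map is restriction in the opposite direction and gives no bound on $H^1(K_w/K'_{w'},\cdot)$. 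Your conclusion that ``$Ker(c)$ is finite unconditionally'' is therefore unjustified, and it contradicts the statement you are proving, whose finiteness clause carries the extra hypothesis that $A[\l^\infty](K_w)$ be finite for $w$ above $S$ precisely because $K'_{w'}$ is no longer a local field and Tate's finiteness results are lost. The argument the paper uses, and which you never state, is simply Lemma \ref{CohoBound} applied to the $\l$-adic Lie group $\Gal(K_w/K'_{w'})$ with module $A[\l^\infty](K_w)$: this gives cofinite generation over $\Z_\l$ unconditionally and finiteness when $A[\l^\infty](K_w)$ is finite. Your substitute via \cite[Remark 3.5]{BL2} ``applied to the $\Z_\l$-quotient of $\Gal(K_w/K'_{w'})$'' also does not work as stated, since at a ramified prime this decomposition group is a general $\l$-adic Lie group and its cohomology is not controlled by a $\Z_\l$-quotient.

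The treatment of unramified primes is also off. Your claim that $S^*=S$ is not justified: there may well be primes of bad reduction unramified in $K/F$, and your appeal to \cite[Ch. I, Proposition 3.8]{Mi1} covers only good reduction and is stated over a local base field, whereas here the base $K'_{w'}$ is an infinite extension of $F_v$. The paper's argument is cleaner and uniform: for any $v$ unramified in $K/F$, the completion $K'_{w'}$ is already the maximal unramified pro-$\l$-extension of $F_v$ (since $\Gal(K'_{w'}/F_v)\simeq\Z_\l$), so $\Gal(K_w/K'_{w'})$ has trivial $\l$-part and the $\l$-primary part of $H^1(K_w/K'_{w'},A(K_w))$ vanishes, regardless of the reduction type at $v$. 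With these two corrections --- trivial $\l$-part at all unramified primes, and Lemma \ref{CohoBound} at the finitely many places above $S$ --- your assembly via the snake lemma goes through exactly as in the paper.
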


\begin{proof}
As usual we are going to work on kernels and cokernels of the maps $b$ and $c$ in the diagram \eqref{DiagModCT}.

\subsection{The map $b$} From the Hochschild-Serre spectral sequence we have that
\[ Ker (b)\simeq H^1(K/K', A[\l^\infty](K))\quad\mathrm{and}\quad Coker(b)\subseteq H^2(K/K', A[\l^\infty](K)) \ .\]
One simply observes that $\Gal(K/K')$ is still an $\l$-adic Lie group and then applies Lemma \ref{CohoBound}.
Hence $Ker(b)$ and $Coker(b)$ are cofinitely generated $\Z_\l$-modules and are finite if
$A[\l^\infty](K)$ is finite.

\subsection{The map $c$} For every prime $v\in\M_F$ let $w'$ be a place of $K'$ lying above $v$. As in Section
\ref{Mapclnotp}, from the Inf-Res sequence, one gets
\[ Ker(d_w) = H^1(K_w/K'_{w'},A[\l^\infty](K_w)) \ .\]
Since $\Gal(K_w/K'_{w'})$ is an $\l$-adic Lie group, every $Ker(d_w)$ is a cofinitely generated $\Z_\l$-module.
The result for $Ker(c)$ will follow from $Ker(d_w)=0$ for all but finitely many primes.
As before the Kummer sequence provides the following diagram
\[ \xymatrix{\ar @{} [dr]
 H^1(K'_{w'},A[\l^{\infty}]) \ar[d]^{d_w} \ar@{^{(}->}[r] &
H^1(K'_{w'},A(F_{v}^s)) \ar[d]^{f_w} \\
H^1(K_w,A[\l^{\infty}]) \ar@{^{(}->}[r] & H^1(K_w,A(F_v^s)) } \]
and, from it, one has the inclusion
\[ Ker(d_w)\iri Ker(f_w)\simeq H^1( K_w/K'_{w'}, A(K_w)) \]
or, more precisely, the isomorphism
\[ Ker(d_w)\simeq H^1( K_w/K'_{w'}, A(K_w))[\l^\infty]\ . \]
Hence we are not going to consider places which are totally split in $K/K'$, because they obviously provide
$Ker(d_w)=0$.

\subsubsection{Unramified primes}
Let $v$ be unramified in $K/F$, then, since $\Gal(K'_{w'}/F_v)\simeq \Z_\l$, $K'_{w'}$ is the maximal
unramified pro-$\l$-extension of $F_v$ and the $\l$-part of
$\Gal(K_w/K'_{w'})$ is trivial. Therefore the $\l$-part of the (torsion) module
$H^1( K_w/K'_{w'}, A(K_w))$ is trivial as well.\\

\noindent Because of the splitting of primes in $K'\,$, we are already left
with finitely many places. So $Ker(c)$ is a cofinitely generated $\Z_\l$-module and the first statement
on $Ker(a)$ and $Coker(a)$ being cofinitely generated over $\Z_\l$ is proved.

\subsubsection{Ramified primes}
Let $v\in S$ and recall that $K'_{w'}$ is not a local field anymore.
If for any $w|w'|v$ the group $A[\l^\infty](K_w)$ is finite, then, by Lemma \ref{CohoBound},
$H^1(K_w/K'_{w'},A[\l^\infty](K_w))$ is finite.
\end{proof}

\begin{cor}\label{FinGenH}
In the setting of Theorem \ref{CTModlneqp}, one has
\begin{itemize}
\item[{\bf 1.}] if $Sel_A(K')_\l$ is a cofinitely generated $\Z_\l$-module, then $Sel_A(K)_\l^\vee$
is finitely generated over $\L(H)$;
\item[{\bf 2.}] suppose that there exists an open, soluble, uniform and pro-$\l$
subgroup $H'$ of $H$; if the groups $Sel_A(K^{H'})_\l$ and $A[\l^\infty](K)$ are finite and for any $w|w'|v\in S$ also $A[\l^\infty](K_w)$
is finite, then $Sel_A(K)_\l^\vee$ is a torsion $\L(H)$-module.
\end{itemize}
\end{cor}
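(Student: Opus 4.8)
The two claims are the $\L(H)$-analogues of Theorems \ref{fingenlneqp} and \ref{TorGModlneqp}, and the plan is to repeat those proofs with $G$ replaced by $H$, the control Theorem \ref{CTlnotp} replaced by the modified control Theorem \ref{CTModlneqp}, and the finite layer $F'/F$ replaced by a finite extension $L'/K'$ of $K'=\F_p^{(\l)}F$.

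For part {\bf 1}, fix an open, topologically finitely generated, powerful, pro-$\l$ subgroup $H'$ of $H$; since $\L(H)$ is finitely generated over $\L(H')$ it suffices to prove $Sel_A(K)_\l^\vee$ is finitely generated over $\L(H')$. Put $L'=K^{H'}$, a finite extension of $K'$ which still contains $\F_p^{(\l)}F$. The proof of Theorem \ref{CTModlneqp} applies verbatim with $L'$ in place of $K'$: all primes of $F$ are unramified in $L'/F$ and none is totally split, and at the primes unramified in $K/F$ the $\l$-part of $\Gal(K_w/L'_u)$ is trivial, so again only the finitely many primes of $S$ contribute to $Ker(c)$, and these contributions are cofinitely generated over $\Z_\l$ by Lemma \ref{CohoBound}. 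Hence $Ker(a_{K/L'})$ and $Coker(a_{K/L'})$ of $a_{K/L'}\colon Sel_A(L')_\l\to Sel_A(K)_\l^{H'}$ are cofinitely generated $\Z_\l$-modules; moreover $Sel_A(L')_\l$ is itself cofinitely generated over $\Z_\l$, this following from the hypothesis on $Sel_A(K')_\l$ and $[L':K']<\infty$ by the same (easier) control argument for the finite extension $L'/K'$, whose local kernels $H^1(L'_u/K'_{w'},A(L'_u))[\l^\infty]$ vanish away from $S$ and whose global kernel/cokernel sit inside $H^i(\Gal(L'/K'),A[\l^\infty](L'))$. Taking Pontrjagin duals in the snake-lemma sequence of the $L'$-analogue of diagram \eqref{DiagModCT} gives
\[
0\longrightarrow Coker(a_{K/L'})^\vee\longrightarrow (Sel_A(K)_\l^{H'})^\vee\longrightarrow Sel_A(L')_\l^\vee\longrightarrow Ker(a_{K/L'})^\vee\longrightarrow 0,
\]
so $(Sel_A(K)_\l^{H'})^\vee\simeq Sel_A(K)_\l^\vee/I_{H'}Sel_A(K)_\l^\vee$ is a finitely generated $\Z_\l$-module, and Theorem \ref{BHNak}.{\bf 1} (applied with $I=I_{H'}$) yields that $Sel_A(K)_\l^\vee$ is finitely generated over $\L(H')$, hence over $\L(H)$.

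For part {\bf 2}, choose $H'$ open, soluble, uniform and pro-$\l$ in $H$, again with fixed field $L'=K^{H'}$ (note $\L(H')$ is then a domain). First observe that $Sel_A(K^{H'})_\l$ finite forces $Sel_A(K')_\l$ finite — hence cofinitely generated over $\Z_\l$ — via the control diagram for $K^{H'}/K'$, so part {\bf 1} already gives that $Sel_A(K)_\l^\vee$ is a finitely generated $\L(H)$-module and it makes sense to ask whether it is torsion. Now the hypotheses are exactly what is needed to make every term in the $L'$-version of Theorem \ref{CTModlneqp} finite: $A[\l^\infty](K)$ finite makes $Ker(b_{K/L'})$ and $Coker(b_{K/L'})$ finite by Lemma \ref{CohoBound}, and finiteness of $A[\l^\infty](K_w)$ for $w\mid w'\mid v\in S$ makes each of the finitely many local kernels $H^1(K_w/L'_u,A[\l^\infty](K_w))$ finite; combined with $Sel_A(K^{H'})_\l=Sel_A(L')_\l$ finite, the displayed exact sequence shows $Sel_A(K)_\l^\vee/I_{H'}Sel_A(K)_\l^\vee=(Sel_A(K)_\l^{H'})^\vee$ is finite. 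Theorem \ref{BHNak}.{\bf 2} then shows $Sel_A(K)_\l^\vee$ is a torsion $\L(H')$-module, hence a torsion $\L(H)$-module in the sense of \cite[Definition 2.6]{Vj} recalled before Corollary \ref{CTCorTor}.

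The only non-formal point in either part is the passage from $K'=\F_p^{(\l)}F$ to the finite extension $L'=K^{H'}$: one must check that the proof of Theorem \ref{CTModlneqp} is unaffected (it is, since $L'$ still contains $\F_p^{(\l)}F$ and the splitting behaviour and triviality of the $\l$-part of inertia at unramified primes are inherited) and that the Selmer-group hypothesis descends from $K'$ to $L'$ (a one-line control argument for a finite extension). Everything else is formal manipulation with the snake lemma, Lemma \ref{CohoBound} and the generalized Nakayama Lemma \ref{BHNak}, exactly as in the $\L(G)$ case.
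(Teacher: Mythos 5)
Your strategy is exactly the one the paper intends: its own proof of Corollary \ref{FinGenH} simply says the arguments are those of Theorems \ref{fingenlneqp} and \ref{TorGModlneqp}, i.e.\ a control theorem in the style of Theorem \ref{CTModlneqp} combined with Theorem \ref{BHNak}, and you have spelled this out, correctly identifying that Theorem \ref{BHNak} forces you to work with an open powerful pro-$\l$ subgroup $H'$ and hence with the field $L'=K^{H'}$ rather than with $H$ and $K'$ themselves. Part \textbf{2} is fine as you argue it: the hypothesis is placed directly at $K^{H'}=L'$, the $L'$-version of Theorem \ref{CTModlneqp} does go through (your parenthetical claim that all primes of $F$ are unramified in $L'/F$ is not quite accurate, since places above $S$ may ramify in $L'/K'$, but this is immaterial because those finitely many places are handled by Lemma \ref{CohoBound} anyway), and the finiteness of $(Sel_A(K)_\l^{H'})^\vee=Sel_A(K)_\l^\vee/I_{H'}Sel_A(K)_\l^\vee$ feeds into Theorem \ref{BHNak}; your detour through part \textbf{1} is unnecessary, since Theorem \ref{BHNak}.\textbf{1} applied to the same finite quotient already gives finite generation over $\L(H')$.

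The soft spot is in part \textbf{1}, at the sentence where you descend the hypothesis from $K'$ to $L'$. The ``easier control argument for the finite extension $L'/K'$'' (for which you should take $H'$ normal in $H$, which is possible by \cite[Corollary 8.34]{DdSMS}, so that $\Gal(L'/K')$ makes sense) only bounds the kernel and cokernel of $Sel_A(K')_\l\to Sel_A(L')_\l^{\Gal(L'/K')}$, hence it shows that the $\Gal(L'/K')$-\emph{invariants} of $Sel_A(L')_\l$ are cofinitely generated over $\Z_\l$, not $Sel_A(L')_\l$ itself, which is what your Nakayama step needs. Passing from the invariants to the whole module is legitimate when $\Delta=\Gal(L'/K')\simeq H/H'$ is an $\l$-group: for an $\l$-primary $\Delta$-module $N$ one has $\dim_{\F_\l}N[\l]\leqslant |\Delta|\cdot\dim_{\F_\l}(N[\l])^\Delta$, and finiteness of $N[\l]$ gives $N^\vee$ finitely generated over $\Z_\l$. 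But $H$ is only assumed to be a closed normal subgroup with $G/H\simeq\Z_\l$, so $H/H'$ may have a nontrivial prime-to-$\l$ part, and then the corresponding ``new'' eigencomponents of $Sel_A(L')_\l$ (essentially Selmer groups of twists of $A$ over $K'$) are not controlled by $Sel_A(K')_\l$ at all. So as written your part \textbf{1} is complete only when $H/H'$ is an $\l$-group (e.g.\ $H$ pro-$\l$); in general one should either add that assumption or place the cofinite-generation hypothesis at $K^{H'}$, as the statement of part \textbf{2} in fact does. To be fair, the paper's one-line proof glosses over exactly the same point, since Theorem \ref{CTModlneqp} is stated only at the level $K'=K^H$ while Theorem \ref{BHNak} requires a powerful pro-$\l$ group.
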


\begin{proof}
The arguments are the same we used for the analogous results for $\L(G)$-modules.
\end{proof}

\begin{rem}
{\em To get $Sel_A(K)_\l^\vee$ finitely generated over $\L(G)$ we only need
to assume that finitely many primes of $F$ are ramified in the Lie extension $K/F$.
Moreover if $Sel_A(\F_p^{(\l)}F)_\l$ is a cofinitely generated $\Z_\l$-module, then $Sel_A(K)_\l^\vee$
is finitely generated also over $\L(H)$. Since $H$ has infinite index in $G$, being finitely
generated over $\L(G)$ and $\L(H)$ implies that $Sel_A(K)_\l^\vee$ is $\L(G)$-torsion
(assuming that $G$ does not contain any element of order $\l$). So
this is another way to obtain torsion $\L(G)$-modules without assuming the finiteness of
$A[\l^\infty](K)$ (which we needed in Corollary \ref{CTCorTor} and is obviously false for
$K=F(A[\l^\infty])\,$). Moreover, as mentioned in the introduction, proving the finitely
generated condition over $\L(G)$ and $\L(H)$ (as done also for all the cases included in Corollary
\ref{FinGenH}) yields that $Sel_A(K)_\l^\vee$ is in the category $\mathfrak{M}_H(G)\,$, i.e.,
allows us to define a characteristic element for $Sel_A(K)_\l^\vee$ (without the $\L(H)$-module
structure, proving that a module is $\L(G)$-torsion is not enough in the non-commutative case). }
\end{rem}

\begin{exe}\label{ExTorField}
{\em Take $K=F(A[\l^\infty])$ as in Section \ref{TorExt}. This kind of extension realizes
na\-tu\-rally most of our assumptions. First of all, from \cite{ST}, $S$ is just the set of places of bad
reduction for $A$ and it is obviously finite (moreover $S=S^*$ in the notations of Section \ref{Mapclnotp}).
As a consequence, by Theorem \ref{fingenlneqp} we get
$Sel_A(K)_\l^\vee$ always finitely generated over $\L(G)$. Then, since $\Gal(K/F)$ embeds
in $GL_{2g}(\Z_\l)$, it is easy to see that for $\l > 2g+1$ the Galois group contains no elements of
order $\l$ so it makes sense to look for torsion modules.
By the Weil-pairing we can take $H$ such that $K'/F$ is the unramified $\Z_\l$-extension
of the constant field of $F$. Because of our choice of $H$, primes in $K'$ above those in $S$ are
finitely many. So, thanks to Theorem \ref{CTModlneqp} if $Sel_A(K')_\l^\vee$ is finitely generated
over $\Z_\l$, then $Sel_A(K)_\l^\vee$ is also finitely generated over $\L(H)$ (hence $\L(G)$-torsion).\\
One can provide examples of Selmer groups $Sel_A(K')_\l$ cofinitely generated over $\Z_\l$ in the work
of Pacheco (\cite[Proposition 3.6]{P}, which generalizes to abelian varieties the analogous
statement of Ellenberg in \cite[Proposition 2.5]{E} for elliptic curves). For more details on
the application of Ellenberg's results to the non-commutative setting of $F(A[\l^\infty])/F$
(like computations of coranks and Euler characteristic of Selmer groups) see Sechi's (unpublished)
PhD thesis \cite{S}. }
\end{exe}

\section {Control theorem for $\l=p$}\label{SecCTl=p}
In order to work with the $p$-torsion part we need to use flat cohomology as mentioned in
Section \ref{SecNot}. We will work with diagram \eqref{FondDiag} but, as we will see, the
kernels and cokernels appearing in the snake lemma sequence will still be described in terms
of Galois cohomology groups. The main difference is provided by the fact that the images of
the local Kummer maps will be nontrivial.\\
To handle the local kernels we shall need the following

\begin{lem}\label{RamPrl=p}
Let $E$ be a local function field and let $L/E$ be an (infinite) $p$-adic Lie extension. Let $A$
be an abelian variety defined over $E$ with good ordinary reduction. Let $I$ be the inertia group
in $\Gal(L/E)$ and assume it is nontrivial. Then $H^1(L/E,A(L))$ (Galois cohomology group) is a
cofinitely generated $\Z_p$-module. Moreover, if $I$ has finite index (i.e., it is open) in
$\Gal(L/E)$, then $H^1(L/E,A(L))$ is finite.
\end{lem}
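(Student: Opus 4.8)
The core idea is to reduce to the reduction exact sequence for an abelian variety with good ordinary reduction over a local field, and then apply cohomology. Let $E$ be the local function field, $\mathcal{O}_E$ its ring of integers, $\mathbb{F}_E$ the residue field, and let $\A/\mathcal{O}_E$ be the N\'eron model of $A$, with reduction $\wh{A}$ the formal group of $\A$ and $\tilde{A}$ the (ordinary) reduction. Over the inertia field $E^{unr}$ (the maximal unramified extension inside $L$, if any — more precisely over $E^{unr}\cap L$) we have the connected-component/reduction sequence
\[ 0 \to \wh{A}(\mathfrak{m}_{\ov{E}}) \to A(\ov{E}) \to \tilde{A}(\ov{\mathbb{F}_E}) \to 0 \]
of $G_E$-modules, and the good-ordinary hypothesis means that on the $p$-divisible part the formal group contributes the ``$\mu$-type'' part and $\tilde{A}[p^\infty]$ the ``\'etale'' part. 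The first step is to write down this sequence, take $\Gal(L/E)$-cohomology, and thereby split the computation of $H^1(L/E, A(L))$ into a piece coming from the formal group $\wh{A}$ and a piece coming from the reduction $\tilde{A}$.

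The second step handles the reduction piece. Since $I$ is nontrivial, $L \supseteq$ a ramified extension, so $\Gal(L/E)/I$ is a quotient of $\Gal(\mathbb{F}_E^{unr}/\mathbb{F}_E)\cong\wh{\Z}$, hence procyclic; and $\tilde{A}(\ov{\mathbb{F}_E})$ is, on its $p$-part, a cofinitely generated $\Z_p$-module with a $\wh{\Z}$-action, so an inflation-restriction argument (or directly the computation of $H^1$ of a procyclic group acting on a cofinitely generated module, as in \cite[Remark 3.5]{BL2}) shows the reduction contribution to $H^1(L/E, A(L))$ is cofinitely generated over $\Z_p$, and finite when $I$ is open (so that $\Gal(L/E)/I$ is finite). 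The main obstacle is the third step: controlling the formal group piece $H^1(L/E, \wh{A}(\mathfrak{m}_L))$. Here one uses that $\wh{A}(\mathfrak{m}_L)$ is, up to finite index / finite kernel and cokernel, a finitely generated $\Z_p[[I_0]]$-module for a suitable open pro-$p$ subgroup $I_0$ of $I$ — indeed over a $p$-adic Lie extension the formal group points form a module over the relevant Iwasawa algebra, and one invokes a Lazard-type / Tate-module argument (as in the $K=F(A[\l^\infty])$ case via \cite{Se3}, or via local Euler characteristic and Tate duality as in the proof of Theorem \ref{CTlnotp}) to see that the $\Gal(L/E)$-cohomology of this module is cofinitely generated over $\Z_p$, and finite once $I$ is open because then the relevant group is compact $p$-adic analytic of the correct cohomological dimension with the module nearly $\Z_p$-cofree.

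Finally, I would assemble the pieces: the long exact cohomology sequence attached to the reduction sequence bounds $H^1(L/E, A(L))$ between a subquotient of $H^1(L/E, \wh{A}(\mathfrak{m}_L))$ and a subquotient of $H^1(L/E, \tilde{A}(\ov{\mathbb{F}_E}))$ (with an error term coming from $H^0$ and $H^2$ of the other factor, all of which are controlled the same way), so cofinite generation — resp. finiteness when $I$ is open — follows. The delicate point to get right is that $\mathbb{F}_{E}$ may be infinite if $L$ is large (it is a function-field-flavoured subtlety absent in the number field case), so one should phrase the reduction-piece argument purely in terms of the procyclic quotient $\Gal(L/E)/I$ acting on the cofinitely generated $\Z_p$-module $\tilde{A}(\ov{\mathbb{F}_E})[p^\infty]$ rather than invoking finiteness of the residue field.
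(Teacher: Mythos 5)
Your overall skeleton is the paper's: the good-reduction exact sequence $\wh{A}(\mathcal{O}_L)\iri A(L)\sri \ov{A}(\F_L)$ (surjectivity on $L$-points coming from smoothness of the N\'eron model), its long exact $\Gal(L/E)$-cohomology sequence, and a separate treatment of the formal-group piece and the reduction piece. Your handling of the reduction piece is essentially the paper's as well: Inf--Res with respect to $L^I$, the procyclic (at most $\Z_p$) quotient $\Gal(L^I/E)$ treated via \cite[Remark 3.5]{BL2}, and the restriction term $H^1(L/L^I,\ov{A}(\F_L))$ controlled by the bound of Lemma \ref{CohoBound} (be careful not to drop this restriction term: it does not vanish and is exactly where the corank, rather than finiteness, enters when $I$ has infinite index). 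Your worry about an infinite residue field concerns $\F_L$, not $\F_E$ (which is finite since $E$ is local), and it is precisely the Case 2 of the paper.

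The genuine gap is the formal-group piece. Your proposed tools do not work there. The local Euler--Poincar\'e characteristic and Tate duality arguments from the proof of Theorem \ref{CTlnotp} are specific to $\l\neq p$: for $p$-power torsion over a local field of characteristic $p$ the Euler characteristic formula \cite[Theorem 7.3.1]{NSW} is unavailable and $H^1(E,A[p^n])$ must be replaced by flat cohomology, so that route collapses. The reference to \cite{Se3} concerns vanishing of $H^i$ of the global image of Galois acting on $V_\l(A)$ and has nothing to say about $\wh{A}(\mathcal{O}_L)$. Finally, the claim that $\wh{A}(\mathfrak{m}_L)$ is nearly a finitely generated $\Z_p[[I_0]]$-module is unsubstantiated, and even granted it would not by itself bound $H^1(L/E,\wh{A}(\mathcal{O}_L))$. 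The actual input, and the only place where good \emph{ordinary} reduction is used in an essential way, is the local duality theorem of Tan \cite[Theorem 2 (a)]{T}: $H^1(E,\wh{A}(\mathcal{O}_{\ov{E}}))\simeq Hom(\ov{B}[p^\infty](\F_E),\Q_p/\Z_p)$, a finite group since $\F_E$ is finite and $\ov{A}$, $\ov{B}$ are isogenous. Inflation $H^1(L/E,\wh{A}(\mathcal{O}_L))\iri H^1(E,\wh{A}(\mathcal{O}_{\ov{E}}))$ then makes the formal-group contribution finite outright, with order bounded by $|\ov{A}[p^\infty](\F_E)|$. Without this (or an equivalent Coates--Greenberg-type statement adapted to characteristic $p$) the lemma is not proved; indeed for supersingular reduction the analogous $H^1$ has infinite $\Z_p$-corank (cf.\ \cite[Theorem 3.6.1]{T2}), so no argument that ignores ordinarity can succeed.
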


\begin{proof}
Let $\wh{A}$ (resp. $\ov{A}$) be the formal group associated to $A$ (resp. the reduction of $A$
at the prime of $E$). Because of good reduction the natural map $A(E')\rightarrow \ov{A}(\F_{E'})$
is surjective for any extension $E'/E$. Hence the sequence
\[ \wh{A}(\mathcal{O}_L) \iri A(L) \sri \ov{A}(\F_L) \]
is exact. Taking $\Gal(L/E)$ cohomology (and recalling that $A(E)\rightarrow \ov{A}(\F_E)$ is
surjective), one gets
\[ H^1(L/E,\wh{A}(\mathcal{O}_L)) \iri H^1(L/E,A(L)) \rightarrow H^1(L/E,\ov{A}(\F_L)) \]
and we will focus on the right and left terms of this sequence from now on.

\noindent By \cite[Theorem 2 (a)]{T}, one has an isomorphism
\[ H^1(E,\wh{A}(\mathcal{O}_{\ov{E}})) \simeq Hom(\ov{B}[p^\infty](\F_E),\Q_p/\Z_p) \]
(the statement of the theorem requires a $\Z_p^d$-extension but part (a) holds independently of that).
Therefore the inflation map provides an inclusion
\[ H^1(L/E,\wh{A}(\mathcal{O}_L)) \iri Hom(\ov{B}[p^\infty](\F_E),\Q_p/\Z_p) \]
and, since $\ov{A}$ and $\ov{B}$ are isogenous, the term on the right is finite of order
$|\ov{B}[p^\infty](\F_E)|=|\ov{A}[p^\infty](\F_E)|$.\\

\noindent For $H^1(L/E,\ov{A}(\F_L))$ we consider two cases:

\subsection{Case 1: $I$ is open in $\Gal(L/E)$.}
Just considering $p$-parts we can assume that $I$ is open in a $p$-Sylow $P_p$ of $\Gal(L/E)$
and we let $L^I$ be its fixed field. Let $\ov{P_p}:=\Gal(L^I/E)$ (a finite group) and note that,
since $L/L^I$ is a totally ramified extension, one has that
$\F_L=\F_{L^I}$ is still a finite field. The Inf-Res sequence reads as
\[ H^1(L^I/E,\ov{A}(\F_{L^I})) \iri H^1(L/E,\ov{A}(\F_L)) \rightarrow H^1(L/L^I,\ov{A}(\F_L)) \ .\]
The group on the left is obviously finite and for the one on the right we can use
\cite[Proposition 3.1]{G} as done previously in Lemma \ref{CohoBound}
(because $I$ is still a $p$-adic Lie group) to get
\[ |H^1(L/L^I,\ov{A}(\F_L))| \leqslant |\ov{A}(\F_L))|^{\tilde{d}_1} \]
where the exponent $\tilde{d}_1=\tilde{d}_1(\Gal(L/E))$ only depends on $\Gal(L/E)$.

\subsection{Case 2: $I$ has infinite index in $\Gal(L/E)$.}
We use the same sequence
\[ H^1(L^I/E,\ov{A}(\F_{L^I})) \iri H^1(L/E,\ov{A}(\F_L)) \rightarrow H^1(L/L^I,\ov{A}(\F_L)) \]
but now $\F_L$ is not a finite field anymore: indeed it contains the $\Z_p$-extension of $\F_E$
(because unramified extensions come from extensions of the field of constants).
For the group on the right we again use Lemma \ref{CohoBound}
to prove that $H^1(L/L^I,\ov{A}(\F_L))$ is a cofinitely generated $\Z_p$-module.
The only difference with case {\bf 1} is that now the $p$-divisible part of $\ov{A}(\F_L)$
might come into play (moreover note that $H^1(L/L^I,\ov{A}(\F_L))$ is a torsion abelian group,
hence its $p$-primary part is exactly $H^1(L/L^I,\ov{A}(\F_L)[p^\infty])$ and $A(\F_L)[p^\infty]$
has finite $\Z_p$-corank). For $H^1(L^I/E,\ov{A}(\F_{L^I}))$ we observe that the $p$-part of
$\Gal(L^I/E)$ is isomorphic to $\Z_p$ and that the subgroup of $\ov{A}(\F_{L^I})$ fixed by
that $p$-part is finite. Hence, by \cite[Lemma 3.4 and Remark 3.5]{BL2}, one has that
\[ |H^1(L^I/E,\ov{A}(\F_{L^I}))| \leqslant  |\ov{A}(\F_{L^I})/(\ov{A}(\F_{L^I}))_{div}| \]
is finite.
\end{proof}

\noindent The lemma provides a bound for the order of $H^1(L/E,A(L))$ which (when
$I$ is open in $\Gal(L/E)\,$) can be written in terms of $\ov{A}[p^\infty](\F_E)$ and
$\ov{A}(\F_L)$. If the inertia is infinite (i.e., if $I$ has order divisible by arbitrary high
powers of $p$ or, as we will say from now on, {\em has order divisible by} $p^\infty$) we
can prove that the inflation map
\[ H^1(L/E,\wh{A}(\mathcal{O}_L)) \iri H^1(E,\wh{A}(\mathcal{O}_{\ov{E}})) \]
is actually an isomorphism but, since this is not going to improve the bound, we decided to keep
this statement out of the lemma and we include it here only for completeness (the proof is just a
generalization of the one provided for \cite[Theorem 2 (b)]{T}).

\begin{prop}
In the same setting of the previous lemma, assume that $I$ has order divisible by $p^\infty\,$, then
\[ H^1(L/E,\wh{A}(\mathcal{O}_L)) = H^1(E,\wh{A}(\mathcal{O}_{\ov{E}})) \]
\end{prop}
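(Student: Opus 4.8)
The plan is to show that the inflation map in question is surjective, since injectivity of inflation in degree $1$ is automatic. By the previous lemma we already have the inclusion
\[ H^1(L/E,\wh{A}(\mathcal{O}_L)) \iri H^1(E,\wh{A}(\mathcal{O}_{\ov{E}})) \simeq \Hom(\ov{B}[p^\infty](\F_E),\Q_p/\Z_p), \]
and the target is finite of order $|\ov{A}[p^\infty](\F_E)|$. So the content is entirely the reverse inclusion. I would run the Hochschild--Serre inflation--restriction sequence for the normal subgroup $\Gal(\ov{E}/L)\trianglelefteq G_E$ acting on $\wh{A}(\mathcal{O}_{\ov E})$, which exhibits the cokernel of inflation as (a subgroup of) the invariants $H^1(\Gal(\ov E/L),\wh{A}(\mathcal{O}_{\ov E}))^{\Gal(L/E)}$. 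Thus it suffices to prove that $H^1(L,\wh A(\mathcal{O}_{\ov E}))$ vanishes, or at least has no nonzero $\Gal(L/E)$-invariants; equivalently, that the analogue of \cite[Theorem 2 (a)]{T} over the (infinite) base $L$ gives a trivial group.

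First I would reduce to a statement about the reduction: over any field $E'$ with good ordinary reduction the formal group sits in
\[ \wh A(\mathcal{O}_{E'}) \iri A(E') \sri \ov A(\F_{E'}), \]
and the ordinary hypothesis further decomposes $\wh A[p^\infty]$ (over the completion) into a connected part $\ov A[p^\infty]^{\circ}$ of dimension $g$ and an étale part, with the connected--étale sequence controlling its Galois cohomology. The key input is that, because $I$ has order divisible by $p^{\infty}$, the field $L$ is ``$p$-adically large'' at the prime: $L$ contains a totally ramified $\Z_p$-extension of $E$, hence its value group is $p$-divisible in the relevant sense and $\F_L$ contains the $\Z_p$-extension of $\F_E$. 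I would then invoke, exactly as in Tate's argument for \cite[Theorem 2 (b)]{T}, the computation of $H^1$ of a formal group of finite height over such a field: the cohomology $H^1(L,\wh A(\mathcal{O}_{\ov E}))$ is Pontrjagin-dual to $\ov B[p^\infty]^{\circ}(\F_L)$ or to a cofixed module that becomes trivial once the ramification is $p^\infty$-divisible, because multiplication by a suitable uniformizer (or by the Frobenius-type operator coming from the ramification) is then surjective on $\wh A(\mathcal{O}_{\ov E})$. Since the target $H^1(E,\wh A(\mathcal{O}_{\ov E}))$ is finite, it is in fact enough to show that each element of it dies after restriction to some finite layer inside $L$ that is ramified enough; the $p^\infty$-divisibility of $|I|$ guarantees such layers exist, so inflation cannot be injective on any nonzero subgroup unless that subgroup is already killed, forcing the cokernel computation to collapse.

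The main obstacle will be making the ``$p^\infty$-ramified $\Rightarrow$ $H^1$ of the formal group dies'' step precise without redoing Tate's local duality computation from scratch — concretely, identifying the correct operator on $\wh A(\mathcal{O}_{\ov E})$ whose surjectivity over $L$ yields the vanishing, and checking it really is surjective once $I$ contains a $\Z_p$ (so that $L$ absorbs a totally ramified $\Z_p$-tower). I expect this to go through by the same filtration-by-$\wh A(\mathfrak m_{\ov E}^n)/\wh A(\mathfrak m_{\ov E}^{n+1})\simeq \ov{\F}$ argument Tate uses, where on each graded piece the relevant operator is a Frobenius-linear endomorphism of $\ov{\F}$, which is surjective, and passing to the limit is harmless because $\wh A(\mathcal{O}_{\ov E})$ is $\mathfrak m$-adically complete and the cohomology of the limit is the limit of the cohomologies in this range. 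A secondary nuisance is bookkeeping with the distinction between $\mathcal{O}_L$-points and $\mathcal{O}_{\ov E}$-points in the inflation map, but that is handled exactly as in the proof of the preceding lemma, via the exact sequence $\wh A(\mathcal{O}_L)\iri A(L)\sri \ov A(\F_L)$ together with surjectivity of $A(E')\to\ov A(\F_{E'})$.
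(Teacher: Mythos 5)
Your overall reduction is sound: by inflation--restriction it is indeed enough to show that every class of $H^1(E,\wh{A}(\mathcal{O}_{\ov{E}}))$ dies after restriction to a sufficiently ramified finite layer inside $L$, and the hypothesis that $|I|$ is divisible by $p^\infty$ supplies such layers. The gap is in the mechanism you propose for this vanishing. The filtration argument with graded pieces $\wh{A}(\mathfrak{m}^n)/\wh{A}(\mathfrak{m}^{n+1})$ on which a Frobenius-linear operator acts surjectively is the mechanism that kills $H^1$ of a formal group along \emph{unramified} extensions (it is Lang's theorem, resp.\ the normal basis theorem, for the residue field); along a totally ramified layer the residue field does not move, the Galois action on each graded piece is trivial, and no such operator is available. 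Likewise, a Coates--Greenberg-type vanishing of $H^1(L,\wh{A}(\mathfrak{m}_{\ov{E}}))$ for deeply ramified $L$ is a characteristic-zero theorem: here $E$ has characteristic $p$, the reduction is ordinary, so $[p]$ on $\wh{A}(\mathfrak{m}_{E^s})$ factors through Frobenius and is injective but \emph{not} surjective; the $p$-divisibility over $\ov{E}$ that drives that theory fails, and Tan's example of infinite corank in the supersingular deeply ramified case shows the picture genuinely does not transfer. As written, the key step ``each class dies at a ramified enough layer'' is asserted rather than proved, and no correct operator or duality is identified that would prove it.

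The paper supplies exactly the missing mechanism through the dual abelian variety $B$, which your sketch never really brings into play. By local Tate duality, $H^1(L/E,A(L))\simeq (B(E)/N_{L/E}(B(L)))^\vee$ and $H^1(E,\wh{A}(\mathcal{O}_{\ov{E}}))$ is the annihilator of $\wh{B}(\mathcal{O}_E)$; combined with the exact sequence $\wh{A}(\mathcal{O}_L)\iri A(L)\sri \ov{A}(\F_L)$, this reduces the proposition to the inclusion $N_{L/E}(B(L))\subseteq \wh{B}(\mathcal{O}_E)$. That inclusion is then proved by an explicit norm computation: the reduction of a partial norm $N_{L/E'}(\alpha)$ lies in the finite torsion group $\ov{B}(\F_{L^I})$, hence has some order $p^m$; norming down through a further totally ramified subextension of degree divisible by $p^m$ multiplies it by $p^m$ and kills it, so the total norm lands in the kernel of reduction, i.e.\ in the formal group. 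This is precisely where the finiteness of $\ov{B}(\F_{L^I})$ and the $p^\infty$-divisibility of the inertia enter. If you prefer your cohomological formulation, the same duality shows that restriction $H^1(E,\wh{A}(\mathcal{O}_{\ov{E}}))\to H^1(E'',\wh{A}(\mathcal{O}_{\ov{E}}))$ is dual to the norm $\ov{B}[p^\infty](\F_{E''})\to \ov{B}[p^\infty](\F_E)$, which for $E''/E$ totally ramified of degree $p^m$ is multiplication by $p^m$ on a finite $p$-group --- but some version of this duality-plus-norm argument appears unavoidable, and it is absent from your proposal.
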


\begin{proof} The inflation map immediately provides one inclusion so we need to prove the reverse one.
By \cite[Corollary 2.3.3]{T}, $H^1(L/E,A(L))$ is the annihilator of $N_{L/E}(B(L))$ with respect to
the local Tate pairing (where $N_{L/E}$ is the natural norm map). This provides an isomorphism
\begin{equation}\label{IsoTatePair} H^1(L/E,A(L)) \simeq (B(E)/N_{L/E}(B(L)))^\vee \ . \end{equation}
Working as in \cite[Section 2]{T} (in particular subsections 2.3
and 2.6 in which the results apply to general Galois extensions), one verifies that
$H^1(E,\wh{A}(\mathcal{O}_{\ov{E}}))$ is the annihilator of $\wh{B}(\mathcal{O}_E)$ with respect to the local
pairing. Moreover, since $\wh{A}(\mathcal{O}_{\ov{E}})$ is the kernel of the reduction map, one has
\[ \begin{array}{ll} H^1(E,\wh{A}(\mathcal{O}_{\ov{E}})) \cap H^1(L/E,A(L)) & \subseteq
Ker \Big\{ H^1(L/E,A(L)) \rightarrow  H^1(L/E,\ov{A}(\F_L)) \Big\} \\
\ & = H^1(L/E,\wh{A}(\mathcal{O}_L))\ . \end{array} \]
Therefore it suffices to show $H^1(E,\wh{A}(\mathcal{O}_{\ov{E}})) \subseteq H^1(L/E,A(L))$ and,
because of the isomorphism \eqref{IsoTatePair}, this is equivalent to proving
$N_{L/E}(B(L)) \subseteq \wh{B}(\mathcal{O}_E)$.\\
Take $\alpha \in B(L)$ and put $x=N_{L/E}(\alpha)$: we can assume that $\alpha$ belongs to the $p$-part
of $B(L)$, so that $x$ is in the $p$-part of $B(E)$. Let $E'$ be an intermediate field containing
$L^I$ and such that $p^\infty$ divides $[E':L^I]$. Consider $z=N_{L/E'}(\alpha)\in B(E')$ and let
$\ov{z}$ be the image of $z$ in $\ov{B}(\mathbb{F}_{E'})=\ov{B}(\mathbb{F}_{L^I})$
(which is a torsion group). Hence $\ov{z}$ has finite order, say $p^m$, and we can find a
field $E''$ between $E'$ and $L^I$ such that $p^m|[E'':L^I]\,$. Put
$y=N_{L/E''}(\alpha)=N_{E'/E''}(z)\in B(E'')$, so that $x=N_{E''/E}(y)$
and note that $\ov{y}$ (the image of $y$ in $\ov{B}(\mathbb{F}_{E''})=\ov{B}(\mathbb{F}_{L^I})\,$)
has order dividing $p^m\,$. Since $\Gal(E''/L^I)$ fixes $\ov{y}$, one has that $N_{E''/L^I}(\ov{y})$
is trivial in $\ov{B}(\mathbb{F}_{L^I})$, i.e., $N_{E''/L^I}(y)\in \wh{B}(\mathcal{O}_{L^I})$
(which is the kernel of the reduction map). Hence
\[ x = N_{E''/E}(y) =  N_{L^I/E}(N_{E''/L^I}(y)) \in N_{L^I/E}(\wh{B}(\mathcal{O}_{L^I}))
\subseteq \wh{B}(\mathcal{O}_E)\ .\]
\end{proof}

\noindent Now we proceed with our control theorem.

\begin{thm}\label{mainthm2}
Assume that all ramified primes are of good ordinary or split multiplicative reduction.
Then, for any finite extension $F'/F$ contained in $K$,
the kernels and cokernels of the map
\[ a_{K/F'}: Sel_A(F')_p \to Sel_A(K)^{\Gal(K/F')}_p \]
are cofinitely generated $\Z_p$-modules. If all primes in $S$ and all primes of bad reduction
have decomposition groups open in $G$, then the coranks of kernels and cokernels
are bounded independently of $F'\,$.
Moreover if the group $A[p^\infty](K)$ is finite, all places in
$S$ are of good reduction and have inertia groups open in their decomposition groups,
then the kernels and cokernels are finite (of bounded order if the primes of bad reduction
have open decomposition group).
\end{thm}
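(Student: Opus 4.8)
The plan is to mimic the proof of Theorem \ref{CTlnotp}: one applies the snake lemma to the fundamental diagram \eqref{FondDiag} with $\l=p$ and reduces everything to bounds on $Ker(b_{K/F'})$, $Coker(b_{K/F'})$ and $Ker(c_{K/F'})$. As in the case $\l\neq p$ these will all be controlled by ordinary Galois cohomology; the only genuinely new feature is the local analysis, where the images of the Kummer maps are no longer trivial and where the ramified primes force us to use the reduction hypothesis.

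The map $b_{K/F'}$ is handled just as before. Hochschild-Serre for flat cohomology gives $Ker(b_{K/F'})\simeq H^1(K/F',A[p^\infty](K))$ and $Coker(b_{K/F'})\hookrightarrow H^2(K/F',A[p^\infty](K))$, where $A[p^\infty](K)=H^0_{fl}(\X_K,A[p^\infty])$ is the separable $p$-power torsion of $A$ over $K$, a discrete $\L(G)$-module which is cofinitely generated over $\Z_p$ (and finite under the extra hypothesis). Lemma \ref{CohoBound} applied with $G=\Gal(K/F)$ then bounds these groups, with coranks at most $\tilde d_i(\Gal(K/F))\corank_{\Z_p}A[p^\infty](K)$ uniformly in $F'$, and with orders at most $|A[p^\infty](K)|^{\tilde d_i(\Gal(K/F))}$ when $A[p^\infty](K)$ is finite.

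For $c_{K/F'}$ I would proceed as in Section \ref{Mapclnotp}: $Ker(c_{K/F'})\hookrightarrow\prod_{v'}\bigcap_{w|v'}Ker(d_w)$, where $d_w$ is now the restriction map on the quotients $H^1_{fl}(\X_\bullet,A[p^\infty])/Im\,\k_\bullet$. Passing to direct limits in the flat Kummer sequences identifies $H^1_{fl}(\X_{L_w},A[p^\infty])/Im\,\k_w$ with the Galois cohomology group $H^1(L_w,A)[p^\infty]$ (here $A$ is smooth, so flat and \'etale cohomology agree for it), so $d_w$ is ordinary restriction and inflation-restriction gives $Ker(d_w)\simeq H^1(K_w/F'_{v'},A(K_w))[p^\infty]$. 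For $v$ unramified of good reduction this group vanishes (Milne's \cite[Ch. I, Proposition 3.8]{Mi1} together with inflation, exactly as for $\l\neq p$), and for the unramified primes of bad reduction the argument of Proposition \ref{UnrPrilnotp} again produces a finite bound, uniform in $F'$ when the decomposition group is open. Thus only the ramified primes $v\in S$ remain, and these split into the two allowed cases. If $v$ has good ordinary reduction, then at each $w|v'|v$ either $K_w/F'_{v'}$ is unramified --- so $Ker(d_w)=0$ --- or the inertia subgroup of $\Gal(K_w/F'_{v'})$ is nontrivial, and Lemma \ref{RamPrl=p} (with $E=F'_{v'}$ and $L=K_w$, the case of finite $\Gal(K_w/F'_{v'})$ being immediate) shows that $H^1(K_w/F'_{v'},A(K_w))$ is cofinitely generated over $\Z_p$, of corank bounded in terms of $g$ and $\tilde d_1(\Gal(K/F))$, and finite when the inertia is open in the decomposition group. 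If $v$ has split multiplicative reduction, I would invoke the non-archimedean uniformization of $A$ over $F'_{v'}$: an exact sequence of Galois modules $0\to\L\to T\to A\to 0$ with $T\simeq\mathbb{G}_m^g$ a split torus and $\L\simeq\Z^g$ a trivial Galois module; on $K_w$-points this stays exact, Hilbert 90 kills $H^1(K_w/F'_{v'},T(K_w))$, and hence $H^1(K_w/F'_{v'},A(K_w))$ embeds into $H^2(\Gal(K_w/F'_{v'}),\Z^g)\simeq Hom_{cont}(\Gal(K_w/F'_{v'})^{ab},(\Q/\Z)^g)$, whose $p$-primary part is cofinitely generated over $\Z_p$ with corank $\leqslant g\cdot\dim\Gal(K/F)$.

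Putting the snake lemma together yields the exact sequence $0\to Ker(a_{K/F'})\to Ker(b_{K/F'})\to Ker(c_{K/F'})\to Coker(a_{K/F'})\to Coker(b_{K/F'})$, so $Ker(a_{K/F'})$ and $Coker(a_{K/F'})$ are cofinitely generated over $\Z_p$. Uniform corank bounds follow once the primes in $S$ and the unramified primes of bad reduction have open decomposition groups, since then (as in the remark following Theorem \ref{CTlnotp}) only boundedly many of the $Ker(d_w)$ are nonzero and each has bounded corank; and under the finiteness hypotheses --- $A[p^\infty](K)$ finite and every place of $S$ of good reduction with inertia open in its decomposition group --- every term of the snake sequence becomes finite. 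I expect the split multiplicative primes to be the main technical point, since (unlike the good ordinary case, which is a direct appeal to Lemma \ref{RamPrl=p}) no preliminary lemma is available: one has to check that the analytic uniformization is available and behaves well over the characteristic-$p$, possibly deeply ramified local fields $F'_{v'}$, that it is compatible with the flat-cohomological definition of the Selmer group, and that the lattice $\L$ genuinely carries the trivial Galois action, so that Hilbert 90 and the computation of $H^2(\Gal(K_w/F'_{v'}),\L)$ apply.
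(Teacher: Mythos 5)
Your proposal follows the paper's proof essentially step for step: the same fundamental diagram and snake lemma, the same treatment of $b_{K/F'}$ via Hochschild--Serre and Lemma \ref{CohoBound}, the reduction of $Ker(c_{K/F'})$ to the local kernels $H^1(K_w/F'_{v'},A(K_w))[p^\infty]$, Lemma \ref{RamPrl=p} for the ramified primes of good ordinary reduction, and Mumford uniformization plus Hilbert 90 and $H^2(K_w/F'_{v'},\Z^g)\simeq((\Gal(K_w/F'_{v'})^{ab})^\vee)^g$ for the split multiplicative ones (the paper checks exactly the points you flag as the technical core: the periods $q'_{A,i}$ lie in $(F'_{v'})^*$, so the lattice is a trivial Galois module, and $\Gal(K_w/F'_{v'})^{ab}$ is virtually a finitely generated $\Z_p$-module).

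The one place you diverge is the unramified primes of bad reduction: you invoke ``the argument of Proposition \ref{UnrPrilnotp}'', but that argument bounds $H^1(K_w/F'_{v'},A[\l^\infty](K_w))$, the cohomology of the torsion submodule, which coincides with the local kernel only when $\l\neq p$ (trivial local Kummer images). For $\l=p$ the local kernel is $H^1(K_w/F'_{v'},A(K_w))[p^\infty]$, and $A(K_w)$ is not a cofinitely generated $\Z_p$-module, so the appeal to \cite[Remark 3.5]{BL2} inside that proposition does not transfer. The paper instead uses, for \emph{all} unramified primes, the tool you already cite in the good-reduction case: by \cite[Ch. I, Proposition 3.8]{Mi1}, $H^1((F'_{v'})^{unr}/F'_{v'},A((F'_{v'})^{unr}))$ equals $H^1$ of the finite component group $\pi_0(\mathcal{A}(F'_{v'})_0)$ of the N\'eron special fibre, which is finite and of order bounded independently of $F'$ (via \cite[Lemma 3.3.1]{T}); inflation then bounds $Ker(d_w)$. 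With that substitution your argument is exactly the paper's.
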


\begin{proof}
We work with the usual diagram \eqref{FondDiag}, where now we cannot substitute flat cohomology with
Galois cohomology and in the groups $\G_A(\cdot)$ the images of the local Kummer maps are nontrivial
(in general).\\
Since the map $\X_K \to \X_{F'}$ is a Galois covering with Galois group $\Gal(K/F')$, the
Hochschild-Serre spectral sequence applies and we will study: \begin{itemize}
\item[-] $Ker(b_{K/F'})\simeq H^1(K/F',A[p^{\infty}](K))$;
\item[-] $Coker(b_{K/F'})\subseteq H^2(K/F',A[p^{\infty}](K))$;
\item[-] $Ker(c_{K/F'})$
\end{itemize}
noting that the cohomology groups on the right are Galois cohomology groups
(see \cite[III.2.21 (a), (b) and III.1.17 (d)]{Mi2}).

\subsection{The map $b_{K/F'}$.} Just use Lemma \ref{CohoBound}.

\subsection{The maps $c_{K/F'}$.} As before we simply work with the maps
\[d_w: H^1_{fl}(X_{F'_{v'}},A[p^{\infty}])/Im\,\k_{v'}\to H^1_{fl}(X_{K_w},A[p^{\infty}])/Im\,\k_w\ . \]
From the Kummer sequence one gets a diagram
\[ \xymatrix{\ar @{} [dr]
 H^1_{fl}(X_{F'_{v'}},A[p^{\infty}])/Im\,\k_{v'} \ar[d]^{d_w} \ar@{^{(}->}[r] &
H^1_{fl}(X_{F'_{v'}},A)[p^\infty] \ar[d]^{f_w} \\
H^1_{fl}(X_{K_w},A[p^{\infty}])/Im\,\k_w \ar@{^{(}->}[r] & H^1_{fl}(X_{K_w},A)[p^\infty] \ ,} \]
and (from the Inf-Res sequence)
\[ Ker(d_w)\iri Ker(f_w)\simeq H^1( K_w/F'_{v'}, A(K_w))[p^\infty]\ . \]
Before moving on observe that $K_w/F'_{v'}$ is a $p$-adic Lie extension because the
decomposition group of any place is closed in $\Gal(K/F')$. Now we distinguish two cases
depending on their behaviour in $K/F'$ (and, as usual, we do not consider primes which split
completely because they give no contribution to $Ker(c_{K/F'})$).

\subsubsection{Unramified primes}
If $v'$ is unramified, from \cite[Proposition I.3.8]{Mi1}, we have
\[ H^1( (F'_{v'})^{unr}/F'_{v'}, A((F'_{v'})^{unr}))=
H^1( (F'_{v'})^{unr}/F'_{v'}, \pi_0(\mathcal{A}(F'_{v'})_0)) \]
where $\pi_0(\mathcal{A}(F'_{v'})_0)$ is the set of connected components of the closed fiber of the
N\'eron model $\mathcal{A}(F'_{v'})$ of $A$ at $v'\,$. The latter is a finite module and it is trivial
when $v'$ is a place of good reduction. Because of the inflation map
\[ H^1( K_w/F'_{v'}, A(K_w)) \iri H^1( (F'_{v'})^{unr}/F'_{v'}, A((F'_{v'})^{unr})) \]
the same holds for $Ker(d_w)$ as well. Note also that in finite unramified (hence cyclic) extensions,
since the group $\pi_0(\mathcal{A}(F'_{v'})_0)$ is finite, the order of the $H^1$ is equal to
the order of the $H^0$ which is uniformly bounded (see for example \cite[Lemma 3.3.1]{T}). Thus the order of
$H^1( (F'_{v'})^{unr}/F'_{v'}, \pi_0(\mathcal{A}(F'_{v'})_0))$ is bounded independently
of $F'\,$.

\subsubsection{Ramified primes}\label{RamSplitRed}
If $v'$ is of good ordinary reduction, just observe that $\Gal(K_w/F'_{v'})$ is a
$p$-adic Lie group with nontrivial inertia and apply Lemma \ref{RamPrl=p}.\\
We are left with ramified primes of split multiplicative reduction. Let $v'$ be such a prime.
We have the exact sequence coming from Mumford's uniformization
\[ < q'_{A,1},\dots,q'_{A,g} > \iri ((\overline{F_v})^*)^g \sri A(\overline{F_v}) \]
(where we recall that $g$ is the dimension of the variety $A$, the $q'_{A,i}$ are parameters
in $(F'_{v'})^*$ and the morphisms behave well with respect to the Galois action).\\
Taking cohomology (and using Hilbert's Theorem 90) one finds an injection
\[ H^1( K_w/F'_{v'},A(K_w)) \iri H^2(K_w/F'_{v'},< q'_{A,1},\dots,q'_{A,g} >) \ .\]
Since $q'_{A,i}\in (F'_{v'})^*$, the action of the Galois group is trivial on them and
we have an isomorphism of Galois modules $< q'_{A,1},\dots,q'_{A,g} > \simeq \Z^g\,$. Hence
\[ H^2(K_w/F'_{v'},< q'_{A,1},\dots,q'_{A,g} >) \simeq H^2(K_w/F'_{v'},\Z^g)
\simeq ((\Gal(K_w/F'_{v'})^{ab})^\vee)^g \]
(where the last one is the Pontrjagin dual of the maximal abelian quotient of $\Gal(K_w/F'_{v'})\,$).
The last one is a cofinitely generated $\Z_p$-module, since $\Gal(K_w/F'_{v'})^{ab}\,$ is virtually a finitely
generated $\Z_p$-module. Indeed, $\ov{[\Gal(K_w/F'_{v'}),\Gal(K_w/F'_{v'})]}$ (the topological
closure of the commutators) is a closed normal subgroup of $\Gal(K_w/F'_{v'})$ and their quotient
is still a $p$-adic Lie group (see \cite[Theorem 9.6 (ii)]{DdSMS} and, for the $\Z_p$-module
structure, \cite[Theorem 4.9]{DdSMS} and \cite[Theorem 4.17]{DdSMS}).
\end{proof}

\begin{rem}{\em \ \begin{itemize}
\item[{\bf 1.}] {The hypothesis on the reduction of A at ramified primes are necessary. Indeed, if $v$ is a
ramified prime of good supersingular reduction and $\Gal(K/F)\simeq \Z_p^d$ (a deeply
ramified extension in the sense of \cite[Section 2]{CG}), then K.-S. Tan has shown that
$ H^1(K_w/F'_{v'},A(K_w))[p^\infty]$ has infinite $\Z_p$-corank (see \cite[Theorem 3.6.1]{T2}).}
\item[{\bf 2.}] {When $A=E$ is an elliptic curve it is easy to see that the number of torsion
points of $p$-power order in a separable extension of $F$ is finite (see, for example,
\cite[Lemma 4.3]{BL}). For a general abelian variety $A$, in \cite[Lemma 2.5.1]{T} K.-S. Tan shows
that $A[p^\infty](K)=A[p^\infty](K\cap F^{unr})$. Hence, for a $p$-adic Lie extension,
we are interested only in the number of $p$-power torsion points in $\F_p^{(p)}F$ (or
$\F_p^{(p)}F'$ for some finite unramified extension $F'$ of $F$).}
\end{itemize}}
\end{rem}

\section{$\L$-modules for $\l=p$}\label{SecModStrl=p}
 The Selmer groups are again $\L(G)$-modules and we will investigate their structure as in the case $l\neq p$.
We assume that $G$ does not contain any element of order $p$.

\subsection{Structure of $Sel_A(K)_p^\vee$ as $\L(G)$-module}

\begin{thm}\label{fingenl=p} With the above notations, if all places in $S$ are of
good ordinary or split multiplicative reduction, then $Sel_A(K)_p^\vee$ is a
finitely generated $\L(G)$-module.
\end{thm}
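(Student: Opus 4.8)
The plan is to mimic exactly the argument used in Theorem \ref{fingenlneqp} for the case $\l\neq p$, replacing Theorem \ref{CTlnotp} by its $\l=p$ analogue Theorem \ref{mainthm2}. First I would reduce to the case of an open, powerful, pro-$p$ subgroup $G'$ of $G$: since $\L(G)$ is finitely generated as a $\L(G')$-module, $Sel_A(K)_p^\vee$ is finitely generated over $\L(G)$ if and only if it is finitely generated over $\L(G')$, so we may as well work with such a $G'$ (which exists by Lazard's theory, as recalled in Section \ref{SecNot}; the hypothesis that $G$ has no element of order $p$ ensures $\L(G')$ behaves well, though it is not strictly needed for mere finite generation). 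Let $F'=K^{G'}$ be its fixed field, which is a finite extension of $F$ contained in $K$.

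Next I would apply the snake lemma to the fundamental diagram \eqref{FondDiag} with this $F'$, yielding the four-term exact sequence
\[ 0\to Coker(a_{K/F'})^\vee\to (Sel_A(K)_p^{G'})^\vee\to Sel_A(F')_p^\vee\to Ker(a_{K/F'})^\vee\to 0. \]
By Theorem \ref{mainthm2} — whose hypothesis that all ramified primes are of good ordinary or split multiplicative reduction is precisely the standing assumption of the present theorem — the modules $Ker(a_{K/F'})$ and $Coker(a_{K/F'})$ are cofinitely generated $\Z_p$-modules, hence their Pontrjagin duals are finitely generated over $\Z_p$. Since $F'/F$ is finite, $Sel_A(F')_p$ is a cofinitely generated $\Z_p$-module by \cite[III.8 and III.9]{Mi1}, so $Sel_A(F')_p^\vee$ is finitely generated over $\Z_p$. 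Exactness of the displayed sequence then forces $(Sel_A(K)_p^{G'})^\vee$ to be a finitely generated $\Z_p$-module.

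Finally, I would identify $(Sel_A(K)_p^{G'})^\vee$ with $Sel_A(K)_p^\vee/I_{G'}Sel_A(K)_p^\vee$, where $I_{G'}=Ker(\L(G')\to\Z_p)$ is the augmentation ideal (Pontrjagin duality turns $G'$-invariants into $I_{G'}$-coinvariants), and invoke part \textbf{1} of Theorem \ref{BHNak}: since $G'$ is topologically finitely generated, powerful and pro-$p$, and $Sel_A(K)_p^\vee$ is a compact $\L(G')$-module whose quotient by $I_{G'}$ is finitely generated over $\L(G')/I_{G'}=\Z_p$, the module $Sel_A(K)_p^\vee$ is finitely generated over $\L(G')$, hence over $\L(G)$. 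The only real subtlety compared to the $\l\neq p$ case is that Theorem \ref{mainthm2} must genuinely be available, i.e. the reduction hypotheses are the price one pays here; beyond that the argument is formal and the main obstacle — controlling the local terms $Ker(c_{K/F'})$ at the ramified primes — has already been absorbed into the proof of Theorem \ref{mainthm2} via Lemma \ref{RamPrl=p} and the Mumford uniformization computation.
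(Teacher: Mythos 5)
Your argument is exactly the paper's proof: the authors simply remark that Theorem \ref{fingenl=p} follows by the same argument as Theorem \ref{fingenlneqp}, with the control theorem \ref{CTlnotp} replaced by Theorem \ref{mainthm2} and finite generation concluded via Theorem \ref{BHNak}. Your write-up correctly fills in the details of that reduction (passage to an open powerful pro-$p$ subgroup $G'$, the four-term exact sequence from the snake lemma, and the identification of $(Sel_A(K)_p^{G'})^\vee$ with the $I_{G'}$-coinvariants), so nothing further is needed.
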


\begin{proof} This is the same proof of Theorem \ref{fingenlneqp}, using Theorems \ref{mainthm2}
and \ref{BHNak}.
\end{proof}

\begin{thm}\label{Torsl=p}
Assume that there exists an open soluble, uniform and pro-$p$ subgroup $G'$ of $G$.
Suppose that $A[p^\infty](K)$ and $Sel_A(F')_p^\vee$, where $F'$ is the fixed field of $G'$, are finite.
If all ramified places of $F$ are of good ordinary reduction for $A$ and have inertia groups open in their
decomposition groups, then $Sel_A(K)_p^\vee$ is a torsion $\L(G)$-module.
\end{thm}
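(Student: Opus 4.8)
The plan is to reproduce, for $\l=p$, the strategy already used for $\l\neq p$ in Theorems \ref{fingenlneqp} and \ref{TorGModlneqp} and Corollary \ref{CTCorTor}: combine the control theorem \ref{mainthm2} with the generalized Nakayama Lemma \ref{BHNak}. As a preliminary step I would invoke Theorem \ref{fingenl=p} (whose hypotheses are met, since all places of $S$ are of good ordinary reduction) to know that $Sel_A(K)_p^\vee$ is already a finitely generated $\L(G)$-module. By the non-commutative notion of torsion recalled before Corollary \ref{CTCorTor}, it then suffices to show that $Sel_A(K)_p^\vee$ is $\L(G')$-torsion in the classical sense, where $G'$ is our open, soluble, uniform pro-$p$ subgroup; this is enough because $G'$, being uniform and hence torsion-free, has integral Iwasawa algebra $\L(G')$.

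For the torsion statement I would apply the snake lemma to diagram \eqref{FondDiag} with $F'$ the fixed field of $G'$, obtaining the four-term exact sequence
\[
0\to Coker(a_{K/F'})^\vee\to (Sel_A(K)_p^{G'})^\vee\to Sel_A(F')_p^\vee\to Ker(a_{K/F'})^\vee\to 0
\]
exactly as in \eqref{SuccFinGen}. Since $A[p^\infty](K)$ is finite and the ramified primes are of good reduction with inertia open in their decomposition groups, the finiteness clause of Theorem \ref{mainthm2} gives that $Ker(a_{K/F'})$ and $Coker(a_{K/F'})$ are finite; together with the assumed finiteness of $Sel_A(F')_p^\vee$, the exact sequence forces $(Sel_A(K)_p^{G'})^\vee$ to be finite. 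I would then identify $(Sel_A(K)_p^{G'})^\vee$ with $Sel_A(K)_p^\vee/I_{G'}Sel_A(K)_p^\vee$, where $I_{G'}$ denotes the augmentation ideal of $\L(G')$, and apply the second part of Theorem \ref{BHNak} — legitimate because $G'$ is soluble, uniform and pro-$p$, and $Sel_A(K)_p^\vee$ is a compact $\L(G')$-module — to conclude that $Sel_A(K)_p^\vee$ is $\L(G')$-torsion, hence $\L(G)$-torsion.

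The one genuinely substantial ingredient is the control theorem \ref{mainthm2}, which we may assume; once its finiteness conclusion is available the remaining argument is purely formal and identical to the $\l\neq p$ case. The only points that require a line of checking are that the reduction and inertia hypotheses stated here feed correctly into the finiteness clause of \ref{mainthm2}, and that a uniform pro-$p$ $G'$ yields an integral $\L(G')$ (so that the non-commutative definition of torsion applies); both are immediate from Section \ref{SecNot}. I do not anticipate any real obstacle. Had the ramified primes been of split multiplicative rather than good ordinary reduction, this direct route would fail and one would instead deduce torsion from the $\L(H)$-module structure, as in Corollary \ref{ModStrl=p}.
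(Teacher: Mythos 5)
Your proposal is correct and follows exactly the paper's argument: the paper's own proof of Theorem \ref{Torsl=p} consists precisely of reducing to the classical torsion statement over $\L(G')$ and then invoking the finiteness clause of Theorem \ref{mainthm2} together with part 2 of Theorem \ref{BHNak}, which is what you spell out in detail via the exact sequence \eqref{SuccFinGen}. No gaps.
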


\begin{proof}It is sufficient to show that $Sel_A(K)_p^\vee$ is a torsion $\L(G')$-module
(classical meaning). In order to do this just use Theorems \ref{mainthm2} and \ref{BHNak}.
\end{proof}

\subsection{Structure of $Sel_A(K)_p^\vee$ as $\L(H)$-module}\label{Hmodl=p} Assume that there
exists a closed normal subgroup $H$ in $G$ such that $G/H=\Gamma\simeq \Z_p$ and let $K'$ be
its fixed field.\\
We need the following

\begin{lem}\label{LocalOrNot}
With notations as above, let $K':=K^H$ be the fixed field of $H$. If $v\in \M_F$ is unramified in $K/F$, then
\begin{itemize}
\item[{\bf 1.}] $v$ splits completely in $K'/F$ or
\item[{\bf 2.}] the decomposition group, in $H=\Gal(K/K')$, of any prime $w'$ of $K'$ dividing $v$
is finite and has order prime to $p$.
\end{itemize}
\end{lem}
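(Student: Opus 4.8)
The plan is to read off the behaviour of $v$ in $K'$ and in $K$ from the image in the quotient $\Gamma := G/H \simeq \Z_p$ of a decomposition group of $G$. Fix a prime $w$ of $K$ above $v$ and let $w'$ be its restriction to $K'$. Since $v$ is unramified in $K/F$, the decomposition group $D_w := \Gal(K_w/F_v) \subseteq G$ is isomorphic to $\Gal(\F_{K_w}/\F_{F_v})$, hence procyclic, topologically generated by a Frobenius element. Write $\ov{D_w}$ for its image in $\Gamma$: this is exactly the decomposition group of $w'$ in $\Gal(K'/F) = \Gamma$, while $D_w \cap H = Ker(D_w \to \Gamma) = \Gal(K_w/K'_{w'})$ is the decomposition group of $w'$ in $H = \Gal(K/K')$. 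Replacing $w$ by another prime of $K$ above $v$ conjugates $D_w$ in $G$, hence (as $H$ is normal) conjugates $D_w \cap H$ in $G$, so it suffices to treat this one $w'$.

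First I would split according to the structure of closed subgroups of $\Z_p$: $\ov{D_w}$ is either trivial or isomorphic to $\Z_p$ (and open in it). If $\ov{D_w} = 0$, then $D_w \subseteq H$, so the decomposition group of $v$ in $\Gal(K'/F)$ is trivial; since $v$ is also unramified in $K'/F$, this means $v$ splits completely in $K'/F$, which is alternative (1). Now assume $\ov{D_w} \simeq \Z_p$ and decompose the procyclic group $D_w = P \times C$ into its pro-$p$ Sylow subgroup $P$ and its (procyclic) prime-to-$p$ part $C$, which is closed with trivial pro-$p$ Sylow. Any continuous homomorphism from $D_w$ to the pro-$p$ group $\Gamma$ factors through the maximal pro-$p$ quotient of $D_w$, which is $P$; hence $D_w \to \Gamma$ annihilates $C$ and restricts to a surjection $P \sri \ov{D_w} \simeq \Z_p$. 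A procyclic pro-$p$ group admitting $\Z_p$ as a quotient is isomorphic to $\Z_p$, and a continuous surjective endomorphism of $\Z_p$ is an automorphism; therefore $P \to \ov{D_w}$ is injective and $D_w \cap H = Ker(D_w \to \Gamma) = C$, a group of order prime to $p$.

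It remains to see that $C$ is finite, and this is the one point that genuinely uses that $G$ is a $p$-adic Lie group and not merely profinite: $D_w$, being closed in $G$, is itself a compact $p$-adic Lie group, hence contains an open uniform pro-$p$ subgroup $V$; the projection of $V$ onto $C$ is a pro-$p$ subgroup of a group of order prime to $p$, hence trivial, so $V \subseteq P$, and since $V$ is open in $D_w = P \times C$ this forces $C$ to be finite. Thus the decomposition group of $w'$ in $H$, namely $C$, is finite of order prime to $p$ — alternative (2). Since $\ov{D_w}$ is either $0$ or $\simeq \Z_p$, in either case one of the two alternatives of the statement holds. I do not anticipate a serious obstacle here: all the ingredients (structure of closed subgroups of $\Z_p$, maximal pro-$p$ quotients of procyclic groups, existence of open uniform pro-$p$ subgroups in $p$-adic Lie groups, as recalled in Section \ref{SecNot}) are standard; the only thing to watch is the bookkeeping with the procyclic decomposition and not forgetting to invoke the Lie hypothesis to rule out an infinite prime-to-$p$ part of $D_w$.
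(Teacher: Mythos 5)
Your proof is correct and follows essentially the same route as the paper's: both exploit that the decomposition group of an unramified place is procyclic and that its pro-$p$ part, being a $p$-adic Lie group, is at most $\Z_p$, which is then exhausted by the quotient $\Gamma=G/H$, leaving a finite prime-to-$p$ kernel. If anything, your argument is slightly more explicit than the paper's on the one delicate point, namely why the prime-to-$p$ part $C$ of $D_w$ must be finite (via an open uniform pro-$p$ subgroup), which the paper leaves implicit in the phrase ``unramified of (finite) order prime to $p$.''
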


\begin{proof}
\noindent Consider the diagram
\[ \begin{xy}
(0,15)*+{K}="v1";(15,18)*+{K_w}="v2";
(0,0)*+{K'}="v3";(15,3)*+{K'_{w'}}="v4";
(0,-15)*{F}="v5";(15,-12)*+{F_v}="v6";
{\ar@{-}^{H} "v1";"v3"};{\ar@{-} "v1";"v2"};
{\ar@{-}^{\Gamma} "v3";"v5"};{\ar@{-} "v3";"v4"};
{\ar@{-} "v5";"v6"};{\ar@{-} "v2";"v4"};
{\ar@{-} "v4";"v6"};{\ar@{-}@/_{1.5pc}/_{G} "v1";"v5"}
\end{xy} \]
where $w|w'|v$.\\
Assume $v$ is unramified, then, since $\Gal(K_w/F_v)$ is still a $p$-adic Lie group and the $p$-part of
an unramified $p$-adic Lie extension of local fields is at most a $\Z_p$-extension,
we have to deal with two cases.
\begin{itemize}
\item[{\bf a.}] {\em $\Gal(K_w/F_v)$ is finite.} Then $\Gal(K'_{w'}/F_v)$ is a finite subgroup of
$\Gamma\simeq \Z_p\,$, hence it is trivial and $v$ splits completely if $K'/F$ (i.e., {\bf 1} holds).
\item[{\bf b.}] {\em $\Gal(K_w/F_v)\simeq \Gamma$.} Then $\Gal(K'_{w'}/F_v)$ can be trivial or isomorphic to $\Gamma$.
If it is trivial we are back to case {\bf 1}. If it is $\Gamma$, then $K'_{w'}$ is the unramified $\Z_p$-extension
of $F_v\,$. Hence the extension $K_w/K'_{w'}$ is unramified of (finite) order prime to $p$ (i.e., {\bf 2} holds).
\end{itemize}
\end{proof}

\noindent As in the $\l\neq p$ case, we will show a slightly modified version of Theorem \ref{mainthm2}.

\begin{thm}\label{CTModl=p} Assume that
\begin{itemize}
\item[{\bf 1.}] {all places in $S$ are of split multiplicative reduction for $A$;}
\item[{\bf 2.}] {all places in $S$ and all places of bad reduction for $A$ split in finitely many primes in $K'/F$.}
\end{itemize}
Then the map
\[ a: Sel_A(K')_p\to Sel_A(K)^H_p\]
has cofinitely generated kernel and cokernel (viewed as $\Z_p$-modules).
\end{thm}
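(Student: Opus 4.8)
The strategy is the same as for Theorem \ref{CTModlneqp}: apply the snake lemma to the diagram \eqref{DiagModCT} (with $p$ in place of $\l$, and with flat cohomology and nontrivial local Kummer images, exactly as in the proof of Theorem \ref{mainthm2}) and bound the kernels and cokernels of the maps $b$ and $c$. For $b$, since $H=\Gal(K/K')$ is again a $p$-adic Lie group, the Hochschild-Serre spectral sequence gives $Ker(b)\simeq H^1(K/K',A[p^\infty](K))$ and $Coker(b)\hookrightarrow H^2(K/K',A[p^\infty](K))$, and Lemma \ref{CohoBound} shows both are cofinitely generated over $\Z_p$ (here one uses that $A[p^\infty](K)$ is cofinitely generated over $\Z_p$, which holds since $A[p^\infty](K)=A[p^\infty](K\cap F^{unr})$ lives inside $A[p^\infty](\ov{F_q})$, a cofinitely generated $\Z_p$-module). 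So the real work is $Ker(c)$.

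For $Ker(c)$, the Kummer sequence gives, exactly as in the proof of Theorem \ref{mainthm2}, the inclusion $Ker(d_w)\hookrightarrow H^1(K_w/K'_{w'},A(K_w))[p^\infty]$ for each place $w|w'|v$, and I only need to handle primes that do not split completely in $K/K'$. I would split into the unramified and the ramified cases. For $v$ unramified in $K/F$, Lemma \ref{LocalOrNot} applies: either $v$ splits completely in $K'/F$ (so $Ker(d_w)=0$ and the prime contributes nothing), or the decomposition group of $w'$ in $H$ is finite of order prime to $p$, whence $H^1(K_w/K'_{w'},A(K_w))[p^\infty]=0$ because the cohomology of a finite group of order prime to $p$ is killed by that order and is $p$-torsion-free on a $p$-primary module. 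Either way there is no contribution. For the ramified primes $v\in S$, which by hypothesis \textbf{1} are of split multiplicative reduction, I would reuse the Mumford-uniformization argument from \S\ref{RamSplitRed}: the sequence $\langle q'_{A,1},\dots,q'_{A,g}\rangle\hookrightarrow((\ov{F_v})^*)^g\twoheadrightarrow A(\ov{F_v})$ together with Hilbert 90 yields an injection $H^1(K_w/K'_{w'},A(K_w))\hookrightarrow H^2(K_w/K'_{w'},\Z^g)\simeq((\Gal(K_w/K'_{w'})^{ab})^\vee)^g$, and since $\Gal(K_w/K'_{w'})$ is a $p$-adic Lie group (a closed subgroup of the $p$-adic Lie group $\Gal(K_w/F_v)$), its maximal abelian quotient is virtually a finitely generated $\Z_p$-module, so this $H^1$ is cofinitely generated over $\Z_p$. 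By hypothesis \textbf{2} only finitely many primes $w'$ of $K'$ lie above the places of $S$ (and above the bad-reduction places, which are among those in $S$ here), so $Ker(c)$ is a finite sum of cofinitely generated $\Z_p$-modules, hence cofinitely generated over $\Z_p$.

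Feeding these bounds into the snake-lemma exact sequence
\[ 0\to Coker(b)\to (\text{coker of the middle inclusion restricted appropriately})\to\cdots \]
— more precisely, into the four-term sequence relating $Ker(a)$, $Coker(a)$, $Ker(b)$, $Coker(b)$, $Ker(c)$ coming from \eqref{DiagModCT} — shows $Ker(a)$ and $Coker(a)$ are cofinitely generated over $\Z_p$, which is the claim.

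\textbf{Main obstacle.} The one genuinely new point compared with Theorem \ref{CTModlneqp} and Theorem \ref{mainthm2} is the unramified-prime analysis: in the $\l\neq p$ modified control theorem one used that $K'_{w'}$ is the maximal unramified pro-$\l$ extension and the $\l$-part of $\Gal(K_w/K'_{w'})$ is trivial, but for $\l=p$ the residue field of $K'_{w'}$ contains the $\Z_p$-extension of $\F_v$ and this reasoning fails — which is exactly why Lemma \ref{LocalOrNot} was isolated, dichotomizing into ``splits completely'' versus ``decomposition group finite of order prime to $p$.'' Checking that the second alternative really forces the $p$-primary $H^1$ to vanish (rather than merely be finite) is the delicate step; everything else is a transcription of the arguments already given for Theorems \ref{mainthm2} and \ref{CTModlneqp}. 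A secondary bookkeeping point is that hypothesis \textbf{2} is what replaces the automatic ``finitely many primes above $S$'' statement available in the $\l\neq p$ case (where $K'=\F_p^{(\l)}F$ forced it), and one must invoke it explicitly to control the number of nonzero $Ker(d_w)$'s contributing to $Ker(c)$ and hence to $Coker(a)$.
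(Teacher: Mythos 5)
Your overall architecture (snake lemma on diagram \eqref{DiagModCT}, Lemma \ref{CohoBound} for the map $b$, Mumford uniformization plus $H^2(K_w/K'_{w'},\Z^g)\simeq((\Gal(K_w/K'_{w'})^{ab})^\vee)^g$ for the ramified primes, and hypothesis \textbf{2} to reduce to finitely many local factors) matches the paper's proof, and the branch of the unramified case where the decomposition group in $H$ is finite of order prime to $p$ is handled correctly.

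There is, however, a genuine gap in the other branch of your unramified-prime analysis. You assert that if $v$ is unramified in $K/F$ and splits completely in $K'/F$, then ``$Ker(d_w)=0$ and the prime contributes nothing.'' Complete splitting in $K'/F$ only says $K'_{w'}=F_v$; it says nothing about the extension $K_w/K'_{w'}$, which is the one computing $Ker(d_w)\simeq H^1(K_w/K'_{w'},A(K_w))[p^\infty]$ and which can be the full unramified $\Z_p$-extension of $F_v$. For $v$ of good reduction the conclusion is still true, but the reason is \cite[Ch. I, Proposition 3.8]{Mi1} (vanishing of unramified cohomology of an abelian variety with good reduction), not the splitting. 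For $v$ of bad reduction the conclusion can fail: the group injects into $H^1((F_v)^{unr}/F_v,\pi_0(\mathcal{A}(F_v)_0))$, which is finite but in general nonzero, and since $v$ splits completely in the $\Z_p$-extension $K'/F$ there are infinitely many $w'$ above it, so $Ker(c)$ would contain an infinite product of nonzero finite groups and need not be cofinitely generated over $\Z_p$. This is exactly the case hypothesis \textbf{2} is designed to exclude, and your parenthetical ``the bad-reduction places, which are among those in $S$ here'' misreads the hypotheses: $S$ is the ramification set, hypothesis \textbf{1} only constrains the reduction at places of $S$, and unramified places of bad reduction are allowed --- they are listed separately in hypothesis \textbf{2} precisely because they are the problematic case. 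The paper's proof accordingly splits the unramified primes by reduction type: for good reduction it uses Milne's proposition when $K'_{w'}=F_v$ and Lemma \ref{LocalOrNot}(2) otherwise; for bad reduction it invokes hypothesis \textbf{2} to rule out $K'_{w'}=F_v$ and then applies Lemma \ref{LocalOrNot}(2).
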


\begin{proof}We go directly to the local kernels for places which do not split completely
\[ Ker(d_w) \simeq H^1( K_w/K'_{w'}, A(K_w))[p^\infty]\ .\]

\subsubsection{Unramified primes of good reduction} Let $v$ be an unramified prime of good reduction for $A$. From
Lemma \ref{LocalOrNot} we know that $K'_{w'}=F_v$ is a local field or $\Gal(K_w/K'_{w'})$ has finite order
prime with $p$. In the first case \cite[Ch. I, Proposition 3.8]{Mi1} shows that $H^1(K_w/K'_{w'},A(K_w))=0$
and we get our claim. In the second case the $p$-part of the torsion module $H^1(K_w/K'_{w'},A(K_w))$ is
obviously trivial.\\

\noindent Because of our hypothesis on the splitting of primes we are already left with finitely many
local kernels, now we check the behaviour of the remaining ones.

\subsubsection{Unramified primes of bad reduction} As above if $\Gal(K'_{w'}/F_v)\simeq\Gamma$ (i.e., $K'_{w'}$
is not a local field), then the $p$-part of $H^1(K_w/K'_{w'},A(K_w))$ is trivial. The other case (i.e.,
$K'_{w'}=F_v\,$) cannot happen because we are assuming that bad reduction primes do not split completely
in $K'\,$.

\subsubsection{Ramified primes} For the ramified ones we use Mumford's parametrization as in Section \ref{RamSplitRed}.
First we get an exact sequence
\[ < q_{A,1},\dots,q_{A,g}> \iri (K_w^*)^g \sri A(K_w) \]
(where we can always assume that the periods are in the base field $F_v\,$).
Then $\Gal(K_w/K'_{w'})$-cohomology provides an injection
\[ H^1(K_w/K'_{w'}, A(K_w)) \iri H^2(K_w/K'_{w'},\Z^g) \simeq ((\Gal(K_w/K'_{w'})^{ab})^\vee)^g\ . \]
\end{proof}

\noindent As a consequence we have the following (which generalizes \cite[Theorem 1.9]{OT}).

\begin{cor}\label{ModStrl=p}
In the setting of Theorem \ref{CTModl=p}, assume that $Sel_A(K')_p^\vee$ is a finitely generated
$\Z_p$-module. Then $Sel_A(K)_p^\vee$ is finitely generated over $\L(H)$ (hence torsion over $\L(G)\,$).
\end{cor}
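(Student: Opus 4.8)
The plan is to deduce the $\L(H)$-structure statement from the control result Theorem \ref{CTModl=p} by means of the generalized Nakayama Lemma (Theorem \ref{BHNak}), in exact parallel with the arguments already carried out for $\L(G)$-modules (e.g. Theorem \ref{fingenlneqp} and Corollary \ref{FinGenH}). First I would pass, as usual, to an open, powerful, pro-$p$ subgroup $H'$ of $H$: since $\L(H)$ is finitely generated as a $\L(H')$-module, $Sel_A(K)_p^\vee$ is finitely generated over $\L(H)$ if and only if it is finitely generated over $\L(H')$, so it suffices to prove the latter. Let $K''=K^{H'}$ be the fixed field of $H'$; it is a finite extension of $K'$, and $H'=\Gal(K/K'')$ is still an open subgroup of $H$ to which the hypotheses of Theorem \ref{CTModl=p} continue to apply (the splitting conditions on primes of $S$ and of bad reduction, and the split multiplicative reduction assumption, are inherited, since $K''/K'$ is finite).

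Next I would apply Theorem \ref{CTModl=p} with $H'$ in place of $H$: the map $a\colon Sel_A(K'')_p\to Sel_A(K)_p^{H'}$ has kernel and cokernel that are cofinitely generated $\Z_p$-modules. Dualizing, one obtains the exact sequence
\[
0\to Coker(a)^\vee\to \big(Sel_A(K)_p^{H'}\big)^\vee\to Sel_A(K'')_p^\vee\to Ker(a)^\vee\to 0.
\]
Here $Ker(a)^\vee$ and $Coker(a)^\vee$ are finitely generated over $\Z_p$ by the theorem, and $Sel_A(K'')_p^\vee$ is finitely generated over $\Z_p$ because $Sel_A(K')_p^\vee$ is (by hypothesis) and $K''/K'$ is finite — equivalently one invokes \cite[III.8 and III.9]{Mi1} together with the finiteness of $Ker$ and $Coker$ of the natural map $Sel_A(K')_p\to Sel_A(K'')_p^{\Gal(K''/K')}$, which is an honest (finite-level) control situation governed by Theorem \ref{mainthm2} and hence again has cofinitely generated kernel and cokernel over $\Z_p$. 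Thus $\big(Sel_A(K)_p^{H'}\big)^\vee$ is finitely generated as a $\Z_p$-module. Identifying $\big(Sel_A(K)_p^{H'}\big)^\vee$ with $Sel_A(K)_p^\vee/I_{H'}Sel_A(K)_p^\vee$, where $I_{H'}$ is the augmentation ideal of $\L(H')$, part {\bf 1} of Theorem \ref{BHNak} yields that $Sel_A(K)_p^\vee$ is finitely generated over $\L(H')$, hence over $\L(H)$.

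Finally, for the torsion statement over $\L(G)$: since $G$ has no element of order $p$ and contains the closed normal subgroup $H$ with $G/H\simeq\Z_p$, the subgroup $H$ has infinite index in $G$. A module that is finitely generated over $\L(H)$ is therefore $\L(G)$-torsion — this is a standard consequence of the fact that $\L(G)$ is finitely generated free as a module over $\L(H)[[\Gamma]]\simeq\L(H)[[T]]$, so a $\L(H)$-finite module cannot contain a free $\L(G)$-submodule. Combined with Theorem \ref{fingenl=p} (which gives finite generation over $\L(G)$ once all places in $S$ are of good ordinary or split multiplicative reduction — here they are of split multiplicative reduction), this shows $Sel_A(K)_p^\vee$ is a torsion $\L(G)$-module. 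The only delicate point is checking that the hypotheses of Theorem \ref{CTModl=p} genuinely descend to the open subgroup $H'$ and that the intermediate Selmer group $Sel_A(K'')_p$ is still cofinitely generated over $\Z_p$; both are routine, using that $K''/K'$ is a finite extension and that passing to $H'$ only shrinks decomposition groups while preserving splitting behaviour up to finite index.
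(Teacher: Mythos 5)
Your proposal is correct and follows the same route the paper intends: dualize the exact sequence coming from the control Theorem \ref{CTModl=p} to see that $(Sel_A(K)_p^{H'})^\vee$ is a finitely generated $\Z_p$-module, apply the generalized Nakayama Lemma (Theorem \ref{BHNak}) to an open powerful pro-$p$ subgroup $H'$ of $H$, and conclude $\L(G)$-torsionness from the fact that $H$ has infinite index in $G$. Your explicit verification that the hypotheses descend to $K''=K^{H'}$ and that $Sel_A(K'')_p^\vee$ remains finitely generated over $\Z_p$ is a detail the paper leaves implicit (it simply says ``the arguments are the same''), but it is the right thing to check and you handle it correctly.
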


\subsection{Final summary}
Take the function fields $K$ and $K'$ such that the hypothesis on the splitting of primes in Theorem
\ref{mainthm2} is verified.\\
For $Sel_A(K)^\vee_p$ to be finitely generated as $\L(G)$-module we need just to assume that all
primes in $S$ are of good ordinary or split multiplicative reduction for $A$. To move a step further
and find $\L(G)$-torsion modules we can assume
\begin{itemize}
\item[{\bf 1.}] all ramified primes are of good ordinary reduction and have open inertia groups;
\item[{\bf 2.}] $A[p^\infty]$ and $Sel_A(F')^\vee_p$ are finite (where $F'$ is the fixed field of
an open soluble, uniform and pro-$p$ subgroup $G'$ of $G$, if such a subgroup exists),
\end{itemize}
and use Theorem \ref{Torsl=p}.\\
Another way to find $\L(G)$-torsion modules is provided by Corollary \ref{ModStrl=p}. It somehow
complements the previous one because it requires a different type of reduction
for the ramified places. The assumptions for this case are
\begin{itemize}
\item[{\bf 3.}] all ramified primes are of split multiplicative reduction;
\item[{\bf 4.}] $Sel_A(K')_p$ is cofinitely generated over $\Z_p\,$.
\end{itemize}
One example for this second case is given by the usual arithmetic extension $K' = \F_p^{(p)}F$
(for which the hypothesis {\bf 1} above obviously does not hold).

\begin{rem}
{\em We found conditions to get $\L(G)$-torsion modules for different types of reduction
for the ramified primes, but we remind that, to get a characteristic element for a
$\L(G)$-torsion module for a non-commutative group $G$, one needs to examine the $\L(H)$-module
structure as well. Hence our results provide characteristic elements for $Sel_A(K)_p^\vee$
only when the ramified primes are of split multiplicative reduction.}
\end{rem}

\noindent Andrea Bandini\\
Universit\`a degli Studi di Parma - Dipartimento di Matematica\\
Parco Area delle Scienze, 53/A - 43124 Parma - Italy\\
e-mail: andrea.bandini@unipr.it\\

\noindent Maria Valentino\\
Universit\`a della Calabria - Dipartimento di Matematica\\
via P. Bucci - Cubo 31B - 87036 Arcavacata di Rende (CS) - Italy\\
e-mail: valentino@mat.unical.it


\begin{thebibliography}{99}

\bibitem{AGP} {\sc S. Arias-de-Reyna, W. Gajda, S. Petersen} {\em Big monodromy theorem for abelian
varieties over finitely generated fields},
arXiv:1201.2335v1 [math.AG] (2011).

\bibitem{BH} {\sc P.N. Balister, S. Howson} {\em Note on Nakayama's lemma for compact $\L$-modules},
Asian J. Math. {\bf 1}, no. 2 (1997), 224--229.

\bibitem{BBL} {\sc A. Bandini, F. Bars, I. Longhi} {\em Aspects of Iwasawa theory over function fields},
arXiv:1005.2289v2 [math.NT], to appear in the EMS Congress Reports.

\bibitem{BL} {\sc A. Bandini, I. Longhi} {\em Control Theorems for elliptic curves over function fields},
Int. J. Number Theory, Vol. {\bf 5}, no. 2 (2009), 229--256.

\bibitem{BL2} {\sc A. Bandini, I. Longhi} {\em Selmer groups for elliptic curves in $\Z^d_l$-extensions
of function fields of characteristic $p$},
Ann. Inst. Fourier {\bf 59}, no. 6 (2009), 2301--2327.

\bibitem{BLV} {\sc A. Bandini, I. Longhi, S. Vigni} {\em Torsion points on elliptic curves over
function fields and a theorem of Igusa},
Expo. Math. {\bf 27}, no. 3 (2009), 175--209.

\bibitem{BuVe} {\sc D. Burns, O. Venjakob} { \em On descent theory and main conjectures in
non-commutative Iwasawa theory},
J. Inst. Math. Jussieu {\bf 10} (2011), 59--118.

\bibitem{CFKSV} {\sc J. Coates, T. Fukaya, K. Kato, R. Sujatha, O. Venjakob} {\em The GL2 main conjecture
for elliptic curves without complex multiplication}
Inst. Hautes Etud. Sci. Publ. Math., {\bf 101} (2005), 163--208.

\bibitem{CG} {\sc J. Coates, R. Greenberg} {\em Kummer theory for abelian varieties over local fields},
Invent. math. {\bf 124} (1996), 129--174.

\bibitem{DdSMS} {\sc J.D. Dixon, M.P.F. du Sautoy, A. Mann, D. Segal} {\em Analytic pro-$p$ groups. 2nd Edition},
Cambridge Studies in Advanced Mathematics {\bf 61}, Cambridge Univ. Press  (1999).

\bibitem{E} {\sc J.S. Ellenberg} {\em Selmer groups and Mordell-Weil groups of elliptic curves
over towers of function fields},
Compos. Math. {\bf 142}, no. 5 (2006), 1215--1230.

\bibitem{G} {\sc R. Greenberg} {\em Galois Theory for the Selmer Group of an Abelian Variety},
Comp. Math {\bf 136} (2003), 255--297.

\bibitem{Ha} {\sc C. Hall} {\em An open image theorem for a general class of abelian varieties},
arXiv:0803.1682v1 [math.NT] (2011).

\bibitem{KT} {\sc K. Kato, F. Trihan} {\em On the conjectures of Birch and Swinnerton-Dyer in characteristic $p>0$},
 Invent. Math. {\bf 153}, no. 3 (2003), 537--592.

\bibitem{LLTT} {\sc K.F. Lai, I. Longhi, K.S. Tan, F. Trihan} {\em On the Iwasawa main conjecture of abelian
varieties over function fields},
arXiv:1205.5945v1 [math.NT] (2012)

\bibitem{L} {\sc M. Lazard} {\em Groupes analytiques p-adiques},
Publ. Math. I.H.E.S. {\bf 26} (1965), 389--603.

\bibitem{LMS} {\sc A. Lubotzky, A. Mann, D. Segal} {\em Finitely generated groups of polynomial subgroup growth},
Israel J. Math. {\bf 82}, no. 1--3 (1993), 363--371.

\bibitem{Ma} {\sc B. Mazur}
{\em Rational points of abelian varieties with values in towers of number fields},
Invent. Math. {\bf 18} (1972), 183--266.

\bibitem{Mi2} {\sc J.S. Milne} {\em  \'Etale Cohomology},
Princeton Mathematical Series, Vol. 33 Princeton
University Press, Princeton, N. J., 1980.

\bibitem{Mi1} {\sc J.S. Milne} {\em Arithmetic Duality Theorems},
BookSurge, LLC, Second edition, 2006.

\bibitem{NSW} {\sc J. Neuchirch, A. Schmidt, K. Wingberg} {\em Cohomology of number fields - Second edition},
GTM 323, Springer-Verlag, 2008.

\bibitem{OT} {\sc T. Ochiai, F. Trihan} {\em On the Selmer groups of abelian varieties over function
fields of characteristic $p > 0$}
Math. Proc. Cambridge Philos. Soc. {\bf 146}, no. 1 (2009), 23--43.

\bibitem{P} {\sc A. Pacheco} {\em Selmer groups of abelian varieties in extensions of function fields},
Math. Z. {\bf 261}, no. 4 (2009), 787--804

\bibitem{Pa} {\sc A. Pal} {\em Functional Equation of Characteristic Elements of Abelian Varieties over Function Fields},
PhD thesis, University of Heidelberg, submitted.

\bibitem{Sc} {\sc P. Schneider} {\em Zur Vermutung von Birch und Swinnerton-Dyer über globalen Funktionenkörpern} (German)
[On the conjecture of Birch and Swinnerton-Dyer over global function fields] Math. Ann. {\bf 260}, no. 4 (1982), 495--510.

\bibitem{S} {\sc G. Sechi} {\em $GL_2$ Iwasawa Theory of Elliptic Curves over Global Function Fields},
PhD thesis, University of Cambridge, (2006).

\bibitem{Se2} {\sc J.P. Serre} {\em Sur la dimension cohomologique des groupes profinis},
Topology {\bf 3} (1965), 413--420.

\bibitem{Se3} {\sc J.P. Serre} {\em Sur les groupes de congruence des vari\'et\'es ab\'eliennes II}
Izv. Akad. Nauk SSSR, Ser. Mat. {\bf 35} (1971), 731--737.

\bibitem{SeCG} {\sc J.P. Serre} {\em Cohomologie Galoiesienne. 5\`eme \'ed., r\'ev. et compl\'et\'e},
Lecture Notes in Mathematics {\bf 5}, Springer-Verlag, 1994.

\bibitem{ST} {\sc J.P. Serre, J. Tate} {\em Good reduction of abelian varieties},
Ann. of Math. {\bf 88} (1968), 492--517.

\bibitem{Sil2} {\sc J.H. Silverman} {\em Advanced Topics in the Arithmetic of Elliptic Curves},
GTM 151, Springer-Verlag, New York, 1994.

\bibitem{Sil1} {\sc J.H. Silverman} {\em The Arithmetic of Elliptic Curves. 2nd Edition},
GTM 106, Springer, 1986.

\bibitem{T} {\sc K.S. Tan} {\em A generalized Mazur's theorem and its applications},
Trans. Amer. Math. Soc. {\bf 362} (2010), 4433--4450.

\bibitem{T2} {\sc K.S. Tan} {\em Selmer groups over $\Z_p^d$-extensions},
arXiv:1205.3907v1 [math.NT] (2012).

\bibitem{Vj} {\sc O. Venjakob} {\em On the structure theory of the Iwasawa algebra of a $p$-adic Lie group},
J. Eur. Math. Soc. (JEMS) {\bf 4}, no. 3 (2002), 271--311.

\bibitem{W} {\sc J.S. Wilson} {\em Profinite groups}
London Mathematical Society Monographs. New Series, 19. The Clarendon Press,
Oxford University Press, New York, 1998.

\bibitem{Wi} {\sc M. Witte} {\em On a noncommutative Iwasawa main conjecture for varieties over finite fields},
to appear in J. Eur. Math. Soc. (JEMS), arXiv:1004.2481v2 [math.NT] (2012).
\end{thebibliography}
\end{document}